\newtheorem{theorem}{Theorem}[section]
\newtheorem{cor}[theorem]{Corollary}
\newtheorem{lemma}[theorem]{Lemma}
\newtheorem{remark}[theorem]{Remark}
\newtheorem{prop}[theorem]{Proposition}
\newtheorem{Add in proof}{Add in proof}[section]
\numberwithin{equation}{section}
\def\R{{\Bbb R}} 
 \def\p#1#2{\dfrac{\partial
#1}{\partial#2}}
\def \p{\partial}
 \def\a{\alpha}
\def\intave#1{-\kern-10.7pt\int_{\,#1}}
\def\b{\beta}
\def\D{\nabla}
\def\g{\gamma}
\def\<{\langle}
\def\>{\rangle}
\def\({\left(}
\def\){\right)}
\def\esssup{\operatornamewithlimits{ess\,sup}}
\def\limsup{\operatornamewithlimits{lim\,sup}}
\def\intave#1{-\kern-10.7pt\int_{\,#1}}
\begin{document}
\title{Well-posedness of the Ericksen-Leslie system for the Oseen-Frank model in $L^3_{uloc}(\R^3)$}

\author{Min-Chun Hong and  Yu Mei}

\address{Min-Chun Hong, Department of Mathematics, The University of Queensland\\
Brisbane, QLD 4072, Australia}

\address{Yu Mei, Department of Mathematics, The University of Queensland\\
Brisbane, QLD 4072, Australia}

\begin{abstract}
{We investigate the Ericksen-Leslie system for the Oseen-Frank model with unequal Frank elastic constants in $\R^3$. To generalize the result of Hineman-Wang \cite{HW}, we   prove existence of  solutions to the  Ericksen-Leslie system    with initial data  having small $L^3_{uloc}$-norm.  In particular, we use a new idea to obtain a local $L^3$-estimate through interpolation inequalities and  a covering argument, which is different from the one in \cite{HW}. Moreover, for uniqueness of solutions,   we  find a new way to remove  the restriction on the Frank elastic constants    by using  the rotation invariant property of the Oseen-Frank density. We  combine this with a method  of Li-Titi-Xin  \cite{LTX} to prove  uniqueness of the $L^3_{uloc}$-solutions of the Ericksen-Leslie system   assuming that the initial data has  a finite energy.   }
     \end{abstract}
 \keywords{Liquid crystal flow, Ericksen-Leslie system, Oseen-Frank model, Uniqueness}

 \maketitle

\pagestyle{myheadings} \markright {The Ericksen-Leslie system }

\section{\bf Introduction}

 Liquid crystals are states of matter intermediate between solid crystals and normal isotropic liquids. One of the most common liquid crystal phases is nematic. It is composed of rod like molecules which exhibit optically distinguished local directions, unlike a liquid, but lacking the lattice structure of a solid. The macroscopic description of nematic liquid crystal flows is based on the continuum model for configuration of crystals and conservation laws for anisotropic fluids. In their seminal works, Oseen \cite{Os} and Frank \cite{Fr} established  variational theory for the static configurations of liquid crystals through seeking the director vector $u\in H^1(\Omega, S^2)$ to minimize the (Oseen-Frank) elastic distortion energy
 \begin{align}\label{mini}
 E(u)=\int_{\Omega}W(u,\D u)dx,
 \end{align}
 where the free energy density $W(u,\D u)$ is of the form
 \begin{align}\label{O-F-density}
 W(u,\D u)=&k_1(\text{div } u)^2+k_2 (u\cdot\text{curl }u)^2+k_3
 |u\times \text{curl } u|^2\\
 &+(k_2+k_4)\left(\text {tr} (\nabla u)^2-(\text {div
 }u)^2\right).\nonumber
 \end{align}
 Here $k_1,k_2,k_3,k_4$ are the Frank elastic constants, which are usually assumed to satisfy Ericksen's inequalities (\cite{ Er-2}, \cite{BE})
 \begin{equation}\label{k-con}
 k_1> 0, \,k_2 >|k_4|,\,k_3>0, \,2k_1\geq k_2+k_4.
 \end{equation}
The first three terms on the right hand of \eqref{O-F-density} are associated with the splay, twist, bend characteristic deformations respectively,  and the fourth term there is responsible to the surface free energy term, which is a null Lagrangian, so that the integral
 $$(k_2+k_4)\int_{\Omega}\left(\text {tr} (\nabla u)^2-(\text {div
 }u)^2\right)dx$$
 depends only on the boundary value of $u$. Based on a generalization of the static Oseen-Frank theory, Ericksen \cite{Er} described  dynamic behaviors of nematic liquid crystals by using conservation laws from continuum fluid mechanics around the 1960s. Later, Leslie \cite{Le} proposed some constitutive equations for nematic liquid crystals and completed  the hydrodynamic theory of liquid crystals. The Ericksen-Leslie theory now is one of the most successful theories for  modelling the nematic liquid crystal flow.

  In this paper, we investigate the  Ericksen-Leslie system for the Oseen-Frank model with unequal elastic constants in $\R^3$.   Let $v=(v^1,v^2,v^3)$ be the velocity field of the fluid, $u=(u^1,u^2,u^3)$  the unit director vector representing the preferred direction of molecular alignment  with its density $W(u,\nabla u)$ and $p$   the pressure.    The  Ericksen-Leslie system is:
 \begin{align}
 &\p_t v^i+(v\cdot\D) v^i+\D_i p=\Delta v^i-\D_j(\D_i u^k W_{p_j^k}),\label{O-F-1}\\
 &\D\cdot v=0,\label{O-F-2}\\
 &\p_tu^i+(v\cdot \D)u^i=\D_\alpha\left(W_{p_\alpha^i}-u^ku^iW_{p_\alpha^k}\right)-W_{u^i}+W_{u^k}u^iu^k\label{O-F-3}\\
 &\qquad\qquad\qquad\qquad\qquad\qquad+W_{p_\alpha^k}\D_\alpha u^ku^i+W_{p_\alpha^k}u^k\D_\alpha u^i.\nonumber
 \end{align}
 When  $k_1=k_2=k_3=1$ and $k_4=0$, $W(u,\D u)=|\D u|^2$ in \eqref{O-F-density} and  the system \eqref{O-F-1}-\eqref{O-F-3}, which is now called the simplified Ericksen-Leslie system, is  a system of the Navier-Stokes equations
coupled with the harmonic map flow.

 The global existence  of
weak solutions to the  Ericksen-Leslie system  is a long-standing open
problem since 1960s. In order to solve this challenging problem, Lin and Liu \cite{LL1,LL2}    investigated the approximate  Ericksen-Leslie system by   the
Ginzburg-Laudau functional  and
proved the global existence of  the
classical solution of the approximate  Ericksen-Leslie system   in 2D and   the weak solution
of the same system  in 3D. However, Lin and Liu \cite {LL2} were not able to show whether the
limit of solutions $(v_{\varepsilon},u_{\varepsilon})$ of the approximate  Ericksen-Leslie systems as $\varepsilon\to 0$  satisfies the
Ericksen-Leslie system, so  there is an open problem whether the limit of solutions of the approximation system  is a solution of
 the Ericksen-Leslie system.  Based  on the    result of Struwe \cite{St} on harmonic maps, Lin-Lin-Wang \cite{LLW} and the first author \cite{Ho3} independently proved global existence of weak solutions, which is smooth away from at most finitely many singular times, to the  Ericksen-Leslie system   in   dimension two. Concerning the effect of  Leslie stress tensor, Huang-Lin-Wang \cite{HLW} also obtained global existence and regularity of weak solutions in $\R^2$.  The uniqueness of weak solutions to the simplified Ericksen-Leslie system in $\R^2$ was proved by Lin-Wang \cite{LW2} and Xu-Zhang \cite{XZ}. However, the question  on the global existence to the
Ericksen-Leslie system in  dimension three remains unresolved. In view of the study on the Navier-Stokes
equations, Huang-Wang \cite{HW2} established the local well-posedness and    blow-up criteria  for  the simplified Ericksen-Leslie system. This result was generalized by Wang-Zhang-Zhang \cite{WZZ}  to the simplified Ericksen-Leslie system with Leslie stress tensor.

 In order to prove   global existence of weak solutions to the Ericksen-Leslie system, it is interesting to prove  existence of solutions in $\R^3$ with rough initial data $(v_0, u_0)$. Indeed, Lin-Wang \cite{LW} proved global existence of weak solutions to the simplified Ericksen-Leslie system in dimension three with rough initial data $(v_0,u_0)\in L^2(\R^3)\times H^1(\R^3; S^2)$ with $\D\cdot v_0=0$ and  $u_0$ satisfying the hemisphere condition. On the other hand,  local and global existence of solutions to the Navier-Stokes equations with rough initial data has been studied by many authors (\cite{Cannone,Kato,Planchon,K-T,LR}). In particular, Kato \cite{Kato}  proved local and global existence of the Navier-Stokes equations with initial data in $L^3(\R^3)$.  Koch-Tartaru \cite{K-T} investigated local and global existence of the Navier-Stokes equations   with rough initial data in $BMO^{-1}$.  Inspired  by these results on the Navier-Stokes equations,   Wang \cite{W} established the well-posedness for the simplified  Ericksen-Leslie system with rough initial data $(v_0,u_0)$ in $BMO^{-1}\times BMO$.  Hineman-Wang \cite{HW} obtained the local well-posedness to the simplified system  with the uniformly locally $L^3$-integrable data of $(v_0,\D u_0)$ of small norm.  Later, high order space-time regularities of the solution in \cite{W} were established in \cite{DW,Lin}. Very recently, Hieber  et al. \cite{HNJS} proved existence of strong solutions to the simplified Ericksen-Leslie system in a bounded domain by using the maximal $L_p$-regularity theory for abstract quasi-linear parabolic problems.

 When the Frank elastic constants $k_i$ in \eqref{O-F-density} are unequal, the study of the Ericksen-Leslie system becomes very complicated. This is because, even in the static theory, the Euler-Lagrangian equation associated to the energy functional \eqref{mini} is not standard elliptic for all possible $k_i$ and the energy minimizer may not satisfy the energy monotonicity inequality, which holds for minimizing harmonic maps. In fact, Hardt-Kinderlehrer-Lin \cite{HLK} proved  existence and partial regularity of minimizers of the Oseen-Frank energy functional with unequal $k_i$, which is different from  minimizing harmonic maps (see also  \cite{Ho1}). For further discussion on the static equilibrium problem with unequal elastic constants, we refer to  a recent survey paper by Ball \cite{Ball}.  As to the Ericksen-Leslie system with unequal $k_i$, the equation (\ref {O-F-3}) for the director vector $u$ is  not   a standard parabolic equation. In particular, the maximum principle for $|u|$ is invalid in general. Due to these   additional difficulties,   the study of the Ericksen-Leslie system with unequal $k_i$ is more challenging than the study of the simplified  system. Hong-Xin \cite{HX} proved global existence of weak solutions with regularities except for at most finitely many singular times to the Ericksen-Leslie system \eqref{O-F-1}-\eqref{O-F-3} in $\R^2$. Later, Wang-Wang \cite{WW} generalized this result to the case of the general Ericksen-Leslie system with Leslie stress tensor under certain constraints on the Leslie coefficients. The uniqueness of these global weak solutions in $\R^2$ was proved by Wang-Wang-Zhang \cite {WWZ} and Li-Titi-Xin \cite{LTX} independently. For the three dimensional problem, Hong-Li-Xin \cite{HLX} established the local well-posedness and blow-up criteria of  strong solutions to the Ericksen-Leslie system \eqref{O-F-1}-\eqref{O-F-3} with initial data $(v_0, u_0)\in  H^1(\R^3)\times H^2_b(\R^3, S^2)$ as well as the convergence  of the Ginzburg-Landau approximation system  in $\R^3$. Wang-Wang \cite{WW} proved the local well-posedness of strong solutions to the general Ericksen-Leslie system with Leslie stress tensor for an initial data $(v_0,  u_0)$ satisfying $(v_0,\D u_0)\in H^{2s}(\R^3)\times H^{2s}(\R^3)$ for $s\geq 2$.

 The aim of this paper is to establish the well-posedness of \eqref{O-F-1}-\eqref{O-F-3} in $\R^3$ with rough initial data $(v_0,u_0)$. Motivated by  the work of Hineman-Wang \cite{HW}, we will investigate this problem for initial data with $(v_0,\D u_0)$ in the  uniformly locally $L^3$-integrable space $L^3_{uloc}(\R^3)$. It should be remarked that  $L^3_{uloc}(\R^3)$ is a critical space for the Ericksen-Leslie system in the sense that under the scaling $(v^\lambda, u^\lambda, p^\lambda)(x,t)=\left(\lambda v(\lambda x,\lambda^2 t), u(\lambda x,\lambda^2 t), \lambda^2p(\lambda x,\lambda^2t)\right)$, one has
\[\int_{B_{1/\lambda}}|v^\lambda |^3+ |\nabla u^\lambda|^3\,dx=\int_{B_1}|v |^3+ |\nabla u|^3\,dx\]
and the Ericksen-Leslie system is invariant. Before stating our main results, let us first recall the definition of the  space $L^3_{uloc}(\R^3)$.

\smallskip
\noindent{\bf Definition.}
 		{\it The uniformly locally $L^3$-integrable space is defined to be the set of all functions $f\in L_{loc}^3(\R^3)$ such that
 		$$ \|f\|_{L^3_{R}(\R^3)}:=\sup_{x\in \R^n}\|f\|_{L^3(B_{R}(x))}<+\infty $$ for some $R>0$.
 }\smallskip

One of our main results in this paper is the following  existence theorem:
 \begin{theorem} \label{thm1}
 There exist $\varepsilon_0>0$ and $\sigma>0$ such that if $(v_0,u_0):\R^3\rightarrow \R^3\times S^2$  satisfying
 \begin{align}\label{Ini}
 \|(v_0, \D u_0)\|_{L_{R_0}^3(\R^3)}\leq \varepsilon_0, \quad\lim\limits_{x\to\infty}\fint_{B_1(x)}|u_0(y)-b|^2dy=0,\quad\D\cdot v_0=0
 \end{align}
 for some constant unit vector $b$ and $R_0>0$, then there exist a maximal  time $T^*\geq\sigma R_0^2$ and a solution $(v,u):\R^3\times[0,T^*)\rightarrow \R^3\times S^2$ to \eqref{O-F-1}-\eqref{O-F-3} with initial data $(v_0,u_0)$ such that
 \begin{itemize}
 	\item [(i)] $ (v, \D u)\in C_\ast([0,T^*), L_{uloc}^3(\R^3))$ and $\| (v, \D u)(t)\|_{L^\infty(0,\sigma R_0^2; L_{R_0}^3(\R^3))}\leq C\varepsilon_0$ for  some absolute constant $C$.
 	\item[(ii)] $(v, u)$ is regular on $(0,T^\ast)$, i.e. $(v, u)\in C^\infty((0,T^*),\R^3\times S^2)$.
 	\item[(iii)] The maximal existence time $T^*$ can be characterized by the condition that for any $0<R<+\infty$, there is a $x_i\in\R^3$ such that
 	\begin{equation}\label{max-time}
 	\limsup_{t\rightarrow T^*}\|(v,\D u)\|_{L^3(B_R(x_i))}>\varepsilon_0.
 	\end{equation}
 \end{itemize}
 Here the notation $f\in C_\ast([0,T), L^3_{uloc}(\R^3))$ means that $t\mapsto f(t)$ is continuous in $L^3_{R_0}$- norm for any $t\in(0,T)$ and $f(t)\rightarrow f(0)$ in $L^3_{loc}(\R^3)$ as $t\rightarrow 0$.
 \end{theorem}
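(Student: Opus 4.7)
The plan is to proceed by smooth approximation combined with uniform a priori estimates. First I would mollify the initial data to produce $(v_0^\varepsilon, u_0^\varepsilon)$ with $u_0^\varepsilon \in S^2$ and with the $L^3_{uloc}$-smallness preserved. The local well-posedness theorem of Hong--Li--Xin \cite{HLX} then yields short-time smooth solutions $(v^\varepsilon, u^\varepsilon)$. The main task is to show these solutions exist on a common time interval $[0,\sigma R_0^2]$ with a uniform bound
\[
\sup_{0\le t\le\sigma R_0^2}\|(v^\varepsilon,\D u^\varepsilon)(t)\|_{L^3_{R_0}(\R^3)}\le C\varepsilon_0,
\]
after which a diagonal weak-$\ast$ compactness argument, combined with Aubin--Lions on bounded subdomains to upgrade convergence of $u^\varepsilon$ to strong local convergence (so that the $S^2$-constraint is preserved in the limit), produces the sought solution $(v,u)$. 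The continuity $(v,\D u)\in C_\ast([0,T^\ast),L^3_{uloc}(\R^3))$ then follows from the equation together with the uniform bound.

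\textbf{The key local estimate.} Fix $x\in\R^3$ and let $\phi$ be a standard cutoff supported in $B_{2R_0}(x)$ with $\phi\equiv 1$ on $B_{R_0}(x)$. I would test \eqref{O-F-1} against $v\phi^2$ and \eqref{O-F-3} against a suitable quantity of the form $(-\D_\alpha W_{p_\alpha^i}+\text{correction})\phi^2$, the correction being dictated by the $|u|=1$ constraint. Using Ericksen's inequalities \eqref{k-con} to obtain the coercivity of $W(u,\D u)$, and using the null-Lagrangian property of the $(k_2+k_4)$-term to remove it from the interior part of the functional, this yields a local energy inequality of the schematic form
\[
\sup_{0\le t\le T}\|(v,\D u)\|_{L^2(B_{R_0}(x))}^2+\int_0^T\|(\D v,\D^2 u)\|_{L^2(B_{R_0}(x))}^2\,dt\le C\,\|(v_0,\D u_0)\|_{L^2(B_{2R_0}(x))}^2+\text{errors},
\]
where the error terms are cubic quantities supported on the larger ball $B_{2R_0}(x)$. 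To upgrade to $L^3$, I would invoke the Gagliardo--Nirenberg interpolation $\|f\|_{L^3(B_r)}\lesssim\|f\|_{L^2(B_r)}^{1/2}\|\D f\|_{L^2(B_r)}^{1/2}+r^{-1/2}\|f\|_{L^2(B_r)}$ applied to $v$ and $\D u$, then integrate in time by H\"older, and finally cover $\R^3$ by a uniformly locally finite family $\{B_{R_0}(x_i)\}$ and take the supremum over $i$. The smallness of $\|(v_0,\D u_0)\|_{L^3_{R_0}}$ is exactly what is needed to absorb the cubic error terms into the coercive left-hand side, closing the estimate on a time interval of length $\sigma R_0^2$ by a standard continuity-in-$T$ argument.

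\textbf{Regularity and maximal time.} Once the limit $(v,u)$ is in hand, the bound $\|(v,\D u)(t_0)\|_{L^3_{R_0}}\le C\varepsilon_0$ for every $t_0\in[0,\sigma R_0^2]$ permits an $\varepsilon$-regularity bootstrap at any positive time: on a parabolic cylinder $B_r(x)\times[t_0,t_0+cr^2]$ with $r$ chosen small enough that the $L^3$-norm on $B_r(x)$ is sufficiently small, the system is subcritical, and the standard Schauder and $L^p$-parabolic theory (applied to the Oseen--Frank principal part, whose ellipticity modulo the null Lagrangian follows from \eqref{k-con}) deliver $C^\infty$-regularity. This gives (ii). For the maximal time in (iii), I would iterate the short-time existence scheme: as long as the blow-up criterion \eqref{max-time} fails for every ball, the solution at time $t<T^\ast$ satisfies the smallness hypothesis and can be restarted, which extends it beyond $t$; hence $T^\ast$ must be characterized by \eqref{max-time}.

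\textbf{Main obstacle.} The crux is the local energy and $L^3$ estimate described above. In Hineman--Wang \cite{HW} the director equation reduces to $u_t-\Delta u=|\D u|^2 u$, which admits Morrey/reverse-H\"older techniques tailored to $|\D u|^2$. Here, because of the unequal $k_i$, the principal part of the $u$-equation is non-diagonal and its coercivity is only guaranteed modulo the $(k_2+k_4)$ null Lagrangian, while simultaneously the $S^2$-constraint must be respected in the testing. Producing an energy inequality with coercive $L^2$-control of $\D^2 u$ whose remainders are in a form amenable to the interpolation-plus-covering closure under only the $L^3_{uloc}$-smallness is precisely the new idea the authors announce, replacing the approach of \cite{HW}.
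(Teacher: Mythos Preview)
Your high-level strategy---approximate, invoke \cite{HLX} for short-time smooth solutions, prove a uniform $L^3_{R_0}$ bound on a time interval $[0,\sigma R_0^2]$ by a continuity argument, then pass to the limit and bootstrap regularity---matches the paper exactly. However, the local estimate you sketch is not strong enough to close. Your energy inequality gives $\sup_t\|(v,\D u)(t)\|_{L^2(B_{R_0})}^2$ together with the \emph{time-integrated} quantity $\int_0^T\|(\D v,\D^2 u)\|_{L^2(B_{R_0})}^2\,dt$, and you then propose to interpolate to $L^3$ and ``integrate in time by H\"older''. But the theorem requires a \emph{pointwise-in-time} bound $\sup_t\|(v,\D u)(t)\|_{L^3_{R_0}}\le C\varepsilon_0$, and the interpolation $\|f\|_{L^3(B_r)}\lesssim\|f\|_{L^2}^{1/2}\|\D f\|_{L^2}^{1/2}+r^{-1/2}\|f\|_{L^2}$ applied to $f=(v,\D u)$ needs $\|(\D v,\D^2 u)(t)\|_{L^2(B_r)}$ at the fixed time $t$, which your estimate does not provide. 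The paper fills this gap with a second local energy inequality (Lemma~\ref{key-lem2} and Proposition~\ref{prop2}): testing \eqref{O-F-1} against $\Delta v\,\phi^6$ and the $\D_\beta$-derivative of \eqref{O-F-3} against $-\D_\beta\Delta u\,\phi^6$, together with a mean-value argument to pick a good initial time $\delta t$, yields the pointwise bound $R\int_{B_{R/4}}(|\D v(\tau)|^2+|\D^2 u(\tau)|^2)\,dx\le C\varepsilon_0^2$ for $\tau\in(\delta t,t)$, and only then does the interpolation close.

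A second omission is the pressure. In the localized energy estimates the boundary terms $\int(p-c(s))v\cdot\D\phi\,\phi$ and $\int(p-c(s))(\D v\cdot\D(\phi^5\D\phi)-\Delta v\,\phi^5\D\phi)$ cannot be absorbed; the paper devotes Lemma~\ref{key-lemma3} to three separate local bounds on $p-c(s)$ in $L^{3/2}$, $L^2$, $L^3$ via the Riesz-transform representation $p=\mathcal{R}_i\mathcal{R}_j(\D_iu^kW_{p_j^k}+v^iv^j)$, a commutator decomposition, and a careful choice of the constant $c(s)$. These pressure bounds are used repeatedly in the covering argument of Section~4, and without them the error terms you call ``cubic quantities supported on the larger ball'' do not close under the $L^3_{uloc}$ smallness alone. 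Finally, a minor point: the paper tests \eqref{O-F-3} against $\Delta u^i\phi^2$ (not against $-\D_\alpha W_{p_\alpha^i}\phi^2$) and obtains coercivity by integrating by parts twice to produce $\int W_{p_\alpha^ip_\gamma^j}\D^2_{\beta\gamma}u^j\D^2_{\beta\alpha}u^i\,\phi^2\ge a\int|\D^2 u|^2\phi^2$ directly from \eqref{uni-ell}, without invoking the null-Lagrangian structure.
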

 Theorem \ref{thm1} generalizes the existence result  of Hineman-Wang \cite{HW} for the simplified   Ericksen-Leslie system.
 We would like to emphasize that    our proof of Theorem \ref{thm1} is  different from the one in \cite{HW}. For the simplified   Ericksen-Leslie system (i.e. $k_1=k_2=k_3=1,k_4=0$), the principle term of the right hand of  \eqref{O-F-3} is $\Delta u$, so  one has the following property:
  \[\int_{B_{R}}\left <\Delta  \nabla u , |\nabla u| \nabla u\right >\phi^2\,dx  =-\int_{B_{R}} |\nabla u|\,[|\nabla u^2|^2 + |\nabla |\nabla u||^2 ]\, \phi^2\,dx+\mbox{low  term}.\]
   Using this property, Hineman-Wang \cite{HW} got the key local decay $L^3$-estimate for the simplified   Ericksen-Leslie system under small $L^3_{uloc}$-norm of $(v_0, \D u_0)$. However,
 when $k_1$, $k_2$, $k_3$ are unequal, the principle term of the right hand of  \eqref{O-F-3} is $\D_\alpha (W_{p_\alpha^i}(u,\nabla u))$, which is completely different from $\Delta u^i$, so we cannot expect to follow the same idea in \cite{HW} to get the $L^3$-energy-dissipative structure. To overcome this difficulty, we present a new method  to derive the local $L^3$-estimate for the rough initial data $(v_0, u_0)$ satisfying \eqref{Ini}. More precisely, we observe that the $L^2$-energy-dissipative structure for the nonlinear term in \eqref{O-F-3} always holds true due to $|u|=1$. Hence, under the assumption that the local $L^3$-energy of $(v,\D u)$ is small, we first derive  two key estimates for $\frac{1}{R}\int_{B_R(x_0)}|v|^2+|\D u|^2dx$ and $R\int_{B_R(x_0)}|\D v|^2+|\D^2 u|^2dx$ (away from the initial time) by standard energy methods and covering arguments. Furthermore, we  verify the smallness assumption of local $L^3$-energy of $(v,\D u)$ based on these local estimates and the following interpolation inequality (c.f. \cite{CKN})
 \begin{align*}
 \int_{B_r}|f|^3dx\leq C(\frac{1}{r}\int_{B_r}|f|^2dx)^{\frac{3}{4}}(r\int_{B_r}|\nabla f|^2dx)^\frac{3}{4}+C(\frac{1}{r}\int_{B_r}|f|^2dx)^{\frac{3}{2}}.
 \end{align*}
 In this process, we need three different kinds of space-time estimates for the pressure in $L^\frac{3}{2}$, $L^2$ and $L^3$. The proof of  these key  estimates on the pressure is based on the well-known Calderon-Zygmund  theory (c.f \cite{Stein}) and covering arguments. Then, we obtain a uniform estimate in $L^3_{uloc}(\R^3)$ of $(v,\D u)$ and uniform lower bounds of the existence time by  complicated covering arguments (see Section 4 for more details). Moreover, with the uniform estimates of the small local  $L^3$-norm of $(v,\nabla u)$ in hand, we derive local higher   regularity  estimates such as
 $\int_{B}|\nabla^{k+1} u|^2 + |\nabla^k v|^2\,dx$ for any $k\geq 1$ based on a standard energy method and an induction argument. Such kind of local higher  regularity estimates was established by  Huang-Lin-Wang \cite{HLW}  for the simplified  Ericksen-Leslie system with Leslie stress tensor in $\R^2$  and   similar  global higher regularity estimates were obtained by Wang-Wang \cite{WW} for the general Ericksen-Leslie system in $\R^2$. Finally, the characterization of maximal time is proved by contradiction.

 As a consequence of Theorem \ref{thm1}, we have the following global existence result:
 \begin{cor}\label{cor}
 	Let $(v_0,u_0)$ be the initial data in $L^3(\R^3,\R^3)\times \dot{W}^{1,3}(\R^3,S^2)$ with $\D\cdot v_0=0$ and $u_0(x)\rightarrow b$ as $|x|\rightarrow \infty$ for some unit vector $b$. Then there exists a positive constant $\varepsilon_0$ such that if
 	\begin{equation}\label{Ini-2}
 	\|v_0\|_{L^3(\R^3)}+\|u_0\|_{\dot{W}^{1,3}(\R^3)}\leq \varepsilon_0,
 	\end{equation}
 	then  the system \eqref{O-F-1}-\eqref{O-F-3} with the initial data $(v_0,u_0)$ has a solution $(v,u)$ such that
 	\begin{equation*}
 	(v,u)\in C^\infty(\R^3\times(0,+\infty))\cap C([0,+\infty),L^3(\R^3,\R^3)\times \dot{W}^{1,3}(\R^3,S^2)).
 	\end{equation*}
 	Here $\dot{W}^{1,3}(\R^3)$ denotes the standard homogeneous Sobolev spaces.
 \end{cor}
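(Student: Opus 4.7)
The plan is to deduce Corollary \ref{cor} from Theorem \ref{thm1} by exploiting the embedding $L^3(\R^3)\hookrightarrow L^3_{uloc}(\R^3)$ with $\|\cdot\|_{L^3_{R_0}}\leq \|\cdot\|_{L^3(\R^3)}$ for \emph{every} scale $R_0>0$, and then upgrading $L^3_{uloc}$-continuity to $L^3$-continuity through a decay-at-infinity argument inherited from the integrability of the initial data.

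First, I would verify the hypotheses \eqref{Ini} of Theorem \ref{thm1}. For any $R_0>0$ and any $x\in\R^3$, one has $\|(v_0,\D u_0)\|_{L^3(B_{R_0}(x))}\leq \|(v_0,\D u_0)\|_{L^3(\R^3)}\leq \varepsilon_0$, so the smallness in \eqref{Ini} holds with the same $\varepsilon_0$ for every $R_0$. Since $u_0(x)\to b$ pointwise as $|x|\to\infty$ and $|u_0-b|\leq 2$, dominated convergence yields the averaged convergence required by \eqref{Ini}. Theorem \ref{thm1} then produces a solution on $[0,T^*)$ with $T^*\geq \sigma R_0^2$; letting $R_0\to\infty$ gives $T^*=+\infty$, and part (ii) supplies $C^\infty$-regularity on $\R^3\times(0,\infty)$.

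Next I would promote $L^3_{uloc}$-continuity to $L^3$-continuity. Given $\eta>0$, choose $\rho$ so that $\|(v_0,\D u_0)\|_{L^3(\R^3\setminus B_\rho)}\leq \eta$. For every $x$ with $|x|\geq \rho+R_0$ the restricted data on $B_{R_0}(x)$ have $L^3$-norm at most $\eta$; revisiting the local $L^3$-propagation underlying Theorem \ref{thm1}(i) with this improved local smallness yields $\|(v(t),\D u(t))\|_{L^3(B_{R_0}(x))}\leq C\eta$ on $[0,T_0]$, the constant absorbing the nonlocal pressure contribution (which is small because the global $L^3_{uloc}$-norm is already small). Summing cubes over a bounded-overlap cover of $\R^3\setminus B_{\rho+2R_0}$ by balls of radius $R_0$ gives $\|(v(t),\D u(t))\|_{L^3(\R^3\setminus B_{\rho+2R_0})}\leq C\eta$ uniformly on $[0,T_0]$. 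Combining this tail control with $L^3$-continuity on the compact set $B_{\rho+2R_0}$ (which is immediate from the $L^3_{uloc}$-continuity in part (i)) via a standard $\varepsilon/3$ argument yields $v(t)\to v_0$ in $L^3(\R^3)$ and $\D u(t)\to \D u_0$ in $L^3(\R^3)$ as $t\to 0^+$; continuity at later times follows from smoothness together with the same tail control, and iteration in time extends continuity to all of $[0,\infty)$.

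The main obstacle is precisely this local-to-global upgrade, because the pressure equation $-\Delta p=\D_i\D_j(v^iv^j+\D_i u^k W_{p_j^k})$ is nonlocal: the local $L^3$-propagation on each $B_{R_0}(x)$ must absorb pressure contributions coming from distant balls. The rescue is that, by Calder\'on--Zygmund together with the smallness of the global $L^3_{uloc}$-norm supplied by Theorem \ref{thm1}(i), these nonlocal contributions are small and can be absorbed into the local estimate. This reuses the pressure machinery already developed in the proof of Theorem \ref{thm1}, but deployed in a global $L^3(\R^3)$ framework rather than a purely local one; once this is in hand the remainder of the argument is essentially routine.
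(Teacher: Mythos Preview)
Your global-existence argument---checking \eqref{Ini} for every $R_0>0$ and letting $R_0\to\infty$ so that $T^*\geq\sigma R_0^2\to\infty$---is exactly the paper's proof, and the paper in fact stops there without separately justifying the stronger claim $(v,\D u)\in C([0,\infty);L^3(\R^3))$ that appears in the corollary statement.

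Your attempt to supply this continuity contains a gap in the summation step. Tracing the local $L^3$-propagation through Section~4, the bound on each ball $B_{R_0}(x)$ contains, besides the local initial-data contribution $C\int_{B_{2R_0}(x)}(|v_0|^3+|\D u_0|^3)\,dx$, pressure-driven terms such as $I_3$ and $J_4$ whose control passes through the far-field piece $f_2$ of Lemma~\ref{key-lemma3}. That piece is estimated by $CR^{-3}\sup_{y\in\R^3}\int_{B_R(y)}(|v|^2+|\D u|^2)\,dx$, which sees the \emph{global} $L^3_{uloc}$-norm and hence is of fixed size $O(\varepsilon_1^3)$ on every ball, independently of $\eta$ and of $|x|$. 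Summing such fixed-size contributions over an infinite bounded-overlap cover of $\R^3\setminus B_{\rho+2R_0}$ diverges, so the conclusion $\|(v(t),\D u(t))\|_{L^3(\R^3\setminus B_{\rho+2R_0})}\leq C\eta$ does not follow from the sketch as written. A workable route is to bypass the local-to-global summation altogether and run the a~priori estimates of Section~\ref{sec2} directly in the global $L^3(\R^3)$ framework for the approximating solutions $(v^k,u^k)$: since $(v^k,\D u^k)\in L^3(\R^3)$ one has $F^{ij}\in L^{3/2}(\R^3)$ and $p=\mathcal R_i\mathcal R_j F^{ij}\in L^{3/2}(\R^3)$ by Calder\'on--Zygmund with no commutator splitting needed, after which the global analogues of Propositions~\ref{prop1}--\ref{prop2} (with $\phi\equiv1$) close and one can pass to the limit.
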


In the second part of this paper, we  prove the uniqueness of $L^3_{uloc}$-solutions in Theorem \ref{thm1}. We set
\begin{equation*}
H_b^1(\R^3;S^2)=\{u:u-b\in H^1(\R^3,\R^3),\,\, \text{and}\,\, |u|=1\,\,\text{a.e. in}\,\,\R^3\}
\end{equation*}
for some $b\in S^2$. Then, our uniqueness theorem is stated as follows:
\begin{theorem}\label{thm2}
 	Let $(v_0,u_0)$ be the initial data satisfying the assumption \eqref{Ini} in Theorem \ref{thm1}. Moreover, assume that $(v_0,u_0)$ satisfy
 	\begin{equation}\label{Ini-3}
 	(v_0,u_0)\in L^2(\R^3;\R^3)\times H^1_b(\R^3;S^2).
 	\end{equation} Then, the solution to \eqref{O-F-1}-\eqref{O-F-3} with initial data $(v_0, u_0)$ is unique.
 \end{theorem}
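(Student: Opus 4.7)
The plan is to derive a Gronwall-type inequality for the $L^2$-norm of the difference of two solutions. Let $(v_1,u_1,p_1)$ and $(v_2,u_2,p_2)$ be two solutions supplied by Theorem~\ref{thm1} with the common initial data $(v_0,u_0)$ satisfying both \eqref{Ini} and \eqref{Ini-3}. Set $w:=v_1-v_2$, $U:=u_1-u_2$, $\pi:=p_1-p_2$.

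First I would upgrade each solution to the global finite-energy class. Exploiting \eqref{Ini-3}, testing \eqref{O-F-1} by $v_j$ and \eqref{O-F-3} by the natural variational quantity (roughly $-\Delta u_j$ corrected for the constraint $|u_j|=1$), and using the interior smoothness from Theorem~\ref{thm1}(ii) together with Ericksen's inequality \eqref{k-con}, one obtains $v_j\in L^\infty_t L^2\cap L^2_t\dot H^1$ and $\D u_j\in L^\infty_t L^2\cap L^2_t\dot H^1$ on compact subintervals of $[0,T^*)$. These global bounds, combined with the uniform smallness of $\|(v_j,\D u_j)\|_{L^\infty_t L^3_{uloc}}$ from Theorem~\ref{thm1}(i), will serve as the background controls for the difference estimate.

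Next I would carry out an $L^2$-energy estimate on the difference system. The central difficulty is that, with unequal Frank constants, the principal part on the right-hand side of \eqref{O-F-3} is the $u$-dependent divergence operator $\D_\alpha W_{p_\alpha^i}(u,\D u)$ rather than $\Delta u^i$, so the difference of these operators produces cross terms bilinear in $(U,\D U)$ and in $(\D u_j,\D^2 u_j)$ which do not assemble into a coercive form unless one assumes $k_1=k_2=k_3$. The new ingredient is the $SO(3)$-rotation invariance $W(Ru,R\D u R^T)=W(u,\D u)$. Differentiating this identity at $R=I$ in the direction of an antisymmetric matrix produces a pointwise algebraic identity linking $u^j W_{u^i}$, $\D_\alpha u^j W_{p_\alpha^i}$ and $\D_\beta u^i W_{p_\alpha^i}$; together with the constraint $|u_j|=1$ this reshapes the offending cross terms so that they pair with the Ericksen stress term $\D_j(\D_i u^k W_{p_j^k})$ from \eqref{O-F-1} and cancel up to lower-order remainders. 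This is the step that dispenses with any assumption among $k_1,k_2,k_3$.

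The leftover remainders are controlled via (i) the $\varepsilon_0$-smallness of $\|(v_j,\D u_j)\|_{L^\infty_t L^3_{uloc}}$, inserted through the interpolation inequality quoted in the introduction and a covering argument so that a small fraction of the dissipation $\int|\D w|^2+|\D^2 U|^2$ absorbs them; (ii) the global $L^2_t\dot H^1$ control from the first step, which supplies an $L^1_t$ coefficient; (iii) the Calderon-Zygmund pressure bounds already developed for Theorem~\ref{thm1}. Following the method of Li-Titi-Xin \cite{LTX}, the cumulative output is an inequality of the form
\[
\frac{d}{dt}\bigl(\|w(t)\|_{L^2}^2+\|\D U(t)\|_{L^2}^2\bigr)\leq C(t)\bigl(\|w(t)\|_{L^2}^2+\|\D U(t)\|_{L^2}^2\bigr),
\]
with $C\in L^1_{\loc}([0,T^*))$, so Gronwall and the vanishing initial datum force $(w,U)\equiv 0$. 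The hard part is the algebraic reduction just described: without the rotation-invariance identity there is no coercive structure for the director difference when $k_1,k_2,k_3$ are unequal, and identifying the precise pairing with the Ericksen stress that realises the cancellation is the heart of the argument.
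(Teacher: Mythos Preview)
Your outline has the right ingredients but runs the difference estimate at the wrong regularity level, and misidentifies the role of both the Li--Titi--Xin device and the rotation invariance.

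You propose a Gronwall inequality for $\|w\|_{L^2}^2+\|\nabla U\|_{L^2}^2$, i.e.\ you work at level $(L^2,\dot H^1)$ for $(v_1-v_2,u_1-u_2)$. The paper, following \cite{LTX}, deliberately drops one order: it introduces $\xi_m=(-\Delta+I)^{-1}v_m$ and closes on $\|\xi\|_{H^1}^2+\|u_1-u_2\|_{L^2}^2$, i.e.\ level $(H^{-1},L^2)$. This is not cosmetic. At your level, after testing the director difference by $-\Delta U$, the $u$-dependence of the principal coefficients $a_{\alpha\beta}^{ij}(u)$ produces the cross term
\[
\int_{\R^3}\bigl(a_{\alpha\beta}^{ij}(u_1)-a_{\alpha\beta}^{ij}(u_2)\bigr)\,\nabla_{\gamma\beta}^2u_2^{\,j}\,\nabla_{\gamma\alpha}^2U^{\,i}\,dx
\;\lesssim\;\int_{\R^3}|U|\,|\nabla^2u_2|\,|\nabla^2U|\,dx,
\]
which after Young's inequality leaves $\int|U|^2|\nabla^2u_2|^2$. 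To put this into the form $C(t)\bigl(\|w\|^2+\|\nabla U\|^2\bigr)$ with $C\in L^1_t$ you need $\nabla^2u_2\in L^2_tL^3_x(\R^3)$; Lemmas~\ref{lem5.1}--\ref{lem5.2} only give $\nabla^2u_j\in L^2_tL^2_x$, and the higher estimates of Section~3 are unavailable uniformly down to $t=0$. The $(-\Delta+I)^{-1}$ trick is precisely what avoids this: at level $L^2$ for the director one integrates by parts only once, the dissipation is $\|\nabla(u_1-u_2)\|_{L^2}^2$, and no $\nabla^2u_j$ ever appears in the coefficients. The pressure also disappears for free since $\xi$ is divergence-free, so the Calderon--Zygmund bounds you invoke are not needed here.

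The rotation invariance enters differently from what you describe. At the $L^2$ director level the single obstruction is
\[
I_1=\int_{\R^3}\Bigl(u_1^ku_1^iW_{p_\alpha^k}(u_1,\nabla u_1)-u_2^ku_2^iW_{p_\alpha^k}(u_2,\nabla u_2)\Bigr)\nabla_\alpha(u_1-u_2)^i\,dx,
\]
which naively contains a term $\int u_2^ku_2^i a_{\alpha\beta}^{kj}(u_2)\nabla_\beta(u_1-u_2)^j\nabla_\alpha(u_1-u_2)^i$ that is quadratic in $\nabla(u_1-u_2)$ with no small prefactor. The paper does \emph{not} use the infinitesimal Noether identity nor any cancellation against the Ericksen stress in \eqref{O-F-1}; instead, by Lemma~\ref{lem-appen} the integrand is invariant under a simultaneous rotation of target and domain, so at each point one may rotate so that $\tilde u_2=(0,0,1)$. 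A direct componentwise computation using $|\tilde u_1|=|\tilde u_2|=1$ (hence $\nabla\tilde u_2^3=0$ and $\tilde u_1^3\nabla\tilde u_1^3=-\tilde u_1^1\nabla\tilde u_1^1-\tilde u_1^2\nabla\tilde u_1^2$) shows $|I_1|\le C\int|\nabla u_1|\,|u_1-u_2|\,|\nabla(u_1-u_2)|$, which is harmless. That pointwise rotation argument is the heart of the matter and is what removes any closeness assumption on $k_1,k_2,k_3$.
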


To  prove the uniqueness of weak solutions to the Ericksen-Leslie system, we use the  idea of Li-Titi-Xin \cite{LTX}   by introducing  the  vector field $(-\Delta+I)^{-1}v$,  but  they needed   a condition of  $k_1,k_2,k_3$ being close to a positive constant to handle the terms involving $W(u,\nabla u)$. In this paper, we are able to remove the restriction of the Frank constants $k_i$  in the $L^2$-norm estimate of the difference of two solutions to the director equation obtained in \cite{LTX} by using the rotation invariant property of $W(u,\D u)$. The similar idea was also used by the first author \cite{Ho1}  to prove the partial regularity of weak solutions to the static Oseen-Frank system. Furthermore, the idea of proving Theorem \ref{thm2} can also be used to prove the uniqueness of weak solutions to \eqref{O-F-1}-\eqref{O-F-3} in $\R^2$, which gives an affirmative answer to the uniqueness question in \cite{HX}.  Comparing with the uniqueness result of Wang-Wang-Zhang \cite{WWZ} in $\R^2$ by using the Littlewood-Paley theory, our method in the $L^2$ framework seems much simpler than theirs.  It should be remarked that  Hineman-Wang \cite{HW} proved the uniqueness of $L^3_{ulco}$-solutions to the simplified Ericksen-Leslie system without the initial finite energy assumption \eqref{Ini-3}. However, it seems difficult for us to apply the same idea, which relies on the properties of heat kernel, to the system \eqref{O-F-1}-\eqref{O-F-3} with unequal  $k_i$.

\begin{remark}
	 For the general Ericksen-Leslie system  with Leslie stress tensor, the energy-dissipation law is complicated and the local pressure estimates  even  for the simplified case require much more technique details, so we will investigate this problem in a forthcoming paper.
\end{remark}
\

The rest of this paper is organized as follows. In Section 2, we prove some key local a prior estimates  to the Ericksen-Leslie system \eqref{O-F-1}-\eqref{O-F-3}.  In Section 3, higher regularity estimates are derived under a smallness assumption. In Section 4, we prove existence of $L^3_{uloc}$-solutions and characterize the maximal existence time. Finally, in Section 5, we prove the uniqueness of $L^3_{uloc}$-solutions.

\section{\bf Key local estimates}\label{sec2}

In this section, we derive a priori estimates for smooth solutions to the Ericksen-Leslie system \eqref{O-F-1}-\eqref{O-F-3}.
Under the physical constrain   condition \eqref{k-con}, it is clear that there exists a $a=\min\{k_2, k_3, k_2+k_4\}>0$ such that
\begin{equation}\label{uni-ell}
W(z,p)\geq a|p|^2,\quad W_{p_\alpha^ip_\beta^j}(z,p)\xi_\alpha^i\xi_\beta^j\geq a|\xi|^2,\quad\forall z\in\mathbb R^3, p,\xi\in\mathbb M^{3\times3},
\end{equation}
which follows from the identities
$$|\D u|^2=\text{tr}(\D u)^2+|\text{curl }u|^2,\quad |\text{curl } u|^2=(u\cdot \text{curl } u)^2+|u\times\text{curl }u|^2.$$
Moreover, $W(u,p)$ is quadratic in both $u$ and $p$ which yields that
\begin{align}\label{qua-est}
&|W(u,\nabla u)|\leq C|u|^2|\nabla u|^2,\quad |W_{u^i}(u,\nabla u)|\leq C|u||\nabla u|^2,\nonumber\\
&|W_{u^iu^j}(u,\nabla u)|\leq C|\nabla u|^2,\quad  |W_{p_\alpha^i}(u,\nabla u)|\leq C|u|^2|\nabla u|,\\
&|W_{p_\alpha^i p_\beta^j}(u,\nabla u)|\leq C|u|^2,\quad |W_{u^ip_\beta^j}(u,\nabla u)|\leq C|u||\nabla u|.\nonumber
\end{align}

In order to obtain the a priori estimates, we start from the following local energy inequality.
\begin{lemma}\label{key-lem1}
	Let $(u, v)$ be a smooth solution to \eqref{O-F-1}-\eqref{O-F-3} in $\mathbb R^3\times(0, T)$. Then for any $\phi\in C_0^\infty(\R^3)$, it holds that, for any $t\in(0,T)$,
	\begin{align}\label{loc-est1}
	&\int_{\R^3}(|v(t)|^2+|\D u(t)|^2)\phi^2dx+\int_{0}^{t}\int_{\R^3}(|\D v|^2+a|\D^2u|^2)\phi^2dxds\nonumber\\
	&\leq \int_{\R^3}(|v_0|^2+|\D u_0|^2)\phi^2dx+4\int_{0}^{t}\int_{\R^3}(p-c(s))v\cdot\D\phi \phi dxds\\
	&\quad +C\int_{0}^{t}\int_{\R^3}(|v|^4+|\D u|^4)\phi^2dxds+C\int_{0}^{t}\int_{\R^3}(|v|^2+|\D u|^2)|\D\phi|^2dxds,\nonumber
	\end{align}
	where $c(t)$ is an arbitrary temporal function and $C$ is an absolute constant.
\end{lemma}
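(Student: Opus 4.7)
The plan is to establish \eqref{loc-est1} via a standard local energy identity: test \eqref{O-F-1} against $v^i\phi^2$, test the spatially differentiated version of \eqref{O-F-3} against $\nabla_\beta u^i\phi^2$, and extract the $a|\nabla^2 u|^2$ dissipation by invoking the Oseen--Frank ellipticity \eqref{uni-ell} on the resulting quadratic form. First, multiplying \eqref{O-F-1} by $v^i\phi^2$ and integrating over $\mathbb{R}^3$, the time derivative contributes $\tfrac{1}{2}\tfrac{d}{dt}\int|v|^2\phi^2\,dx$; the diffusion yields the dissipation $\int|\nabla v|^2\phi^2\,dx$ plus a boundary error $-2\int\nabla v\cdot v\,\phi\nabla\phi\,dx$; the convection, after using $\nabla\cdot v=0$, reduces to $\int|v|^2 v\cdot\nabla\phi\,\phi\,dx$; and the pressure, again using incompressibility to subtract an arbitrary $c(t)$ before integrating by parts, produces $-2\int(p-c(t))v\cdot\nabla\phi\,\phi\,dx$. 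A single integration by parts on the Ericksen stress $-\nabla_j(\nabla_i u^k W_{p_j^k})v^i\phi^2$ yields $\int\nabla_i u^k W_{p_j^k}\nabla_j v^i\phi^2\,dx$ plus a cutoff boundary contribution.

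Next, applying $\nabla_\beta$ to \eqref{O-F-3} and testing with $\nabla_\beta u^i\phi^2$ (summing in $\beta,i$), the time derivative gives $\tfrac{1}{2}\tfrac{d}{dt}\int|\nabla u|^2\phi^2\,dx$, and the transport term, after using $\nabla\cdot v=0$, contributes $-\int|\nabla u|^2 v\cdot\nabla\phi\,\phi\,dx$ together with a term controlled by $|\nabla v||\nabla u|^2$. The crucial computation is on the principal elliptic term $\nabla_\beta\nabla_\alpha W_{p_\alpha^i}(u,\nabla u)$: one integration by parts in $\alpha$ together with the expansion $\nabla_\beta W_{p_\alpha^i} = W_{p_\alpha^i p_\gamma^j}\nabla_\beta\nabla_\gamma u^j + W_{p_\alpha^i u^j}\nabla_\beta u^j$ produces the leading quadratic form $-\int W_{p_\alpha^i p_\gamma^j}\nabla_\beta\nabla_\gamma u^j\,\nabla_\alpha\nabla_\beta u^i\,\phi^2\,dx$, which by \eqref{uni-ell} is bounded above by $-a\int|\nabla^2 u|^2\phi^2\,dx$; moved to the left-hand side, this is the advertised $a|\nabla^2 u|^2\phi^2$ dissipation. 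The constraint piece $-\nabla_\beta\nabla_\alpha(u^k u^i W_{p_\alpha^k})$ and the lower-order Euler--Lagrange terms $-W_{u^i}+W_{u^k}u^iu^k+W_{p_\alpha^k}\nabla_\alpha u^k u^i+W_{p_\alpha^k}u^k\nabla_\alpha u^i$ then produce only subleading nonlinear contributions after integration by parts.

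To close the estimate I would add the two identities and bound every remaining term, each being a product of $|v|$, $|\nabla v|$, $|\nabla u|$, and $|\nabla^2 u|$ with either $\phi$ or $|\nabla\phi|$. Using \eqref{qua-est} together with $|u|=1$ (so that $|W_p|\leq C|\nabla u|$, $|W_{pp}|\leq C$, $|W_{pu}|\leq C|\nabla u|$, $|W_u|\leq C|\nabla u|^2$, $|W_{uu}|\leq C|\nabla u|^2$) and then applying Young's inequality, the terms containing $|\nabla v|$ or $|\nabla^2 u|$ split into a small multiple of the dissipation, which is absorbed on the left-hand side, plus quartic contributions of the form $C(|v|^4+|\nabla u|^4)\phi^2$ and cutoff contributions $C(|v|^2+|\nabla u|^2)|\nabla\phi|^2$. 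Integrating in time from $0$ to $t$ and multiplying through by $2$ gives \eqref{loc-est1}. The main obstacle is the bookkeeping of the nonlinear coupling between \eqref{O-F-1} and \eqref{O-F-3} arising from the Ericksen stress and the constraint projection $-\nabla_\alpha(u^k u^i W_{p_\alpha^k})$, whose spatial derivative produces several terms involving $\nabla^2 u$; the unit constraint $|u|=1$ together with the structural bounds \eqref{qua-est} is crucial to ensure that every such term is controlled by $\epsilon|\nabla^2 u|^2\phi^2$ plus the admissible quartic or cutoff pieces on the right-hand side.
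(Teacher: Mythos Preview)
Your proposal is correct and follows essentially the same strategy as the paper: test \eqref{O-F-1} against $v^i\phi^2$, obtain the $a|\nabla^2 u|^2$ dissipation from the ellipticity \eqref{uni-ell}, and close with \eqref{qua-est}, the constraint $|u|=1$, and Young's inequality. The only technical difference is that the paper tests \eqref{O-F-3} directly against $\Delta u^i\phi^2$ rather than differentiating first and testing against $\nabla_\beta u^i\phi^2$; their choice makes the use of $u^i\Delta u^i=-|\nabla u|^2$ on the constraint term $-\nabla_\alpha(u^ku^iW_{p_\alpha^k})$ immediate (see \eqref{3.11}), whereas in your route the analogous cancellation comes from $u^i\nabla^2_{\alpha\beta}u^i=-\nabla_\alpha u\cdot\nabla_\beta u$, which you correctly flag as the crucial use of $|u|=1$.
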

\begin{proof}
Multiplying \eqref{O-F-1} by $v^i\phi^2$, integrating  by parts, and using \eqref{O-F-2}, \eqref{qua-est} and  Young's inequality, we obtain
\begin{align}\label{2.2}
&\frac{1}{2}\frac{d}{dt}\int_{\mathbb R^3}|v|^2\phi^2dx+\int_{\R^3}|\D v|^2\phi^2dx\nonumber\\
&=2\int_{\mathbb R^3}(p-c(t))v\cdot\nabla\phi \phi dx+\int_{\mathbb R^3} |v|^2v\cdot\nabla\phi \phi dx+2\int_{\mathbb R^3}\phi v^i\nabla v^i\cdot\nabla\phi dx\nonumber\\
&\quad+\int_{\mathbb R^3}\nabla_i u^kW_{p^k_j}\nabla_j v^i\phi^2dx+2\int_{\mathbb R^3}\nabla_i u^kW_{p_j^k}v^i\nabla_j\phi\phi dx,\\
&\leq\frac{1}{2}\int_{\R^3}|\D v|^2\phi^2dx+2\int_{\mathbb R^3}(p-c(t))v\cdot\nabla\phi \phi dx\nonumber\\
&\quad+C\int_{\R^3}(|v|^4+|\D u|^4)\phi^2dxds+C\int_{\R^3}|v|^2|\D\phi|^2dx,\nonumber
\end{align}
where $c(t)$ is a temporal function to be chosen later.

Multiplying \eqref{O-F-3} by $\Delta u^i\phi^2$ and integrating over $\mathbb R^3$, we have
\begin{align}\label{3.8}
&\int_{\mathbb R^3}\p_tu^i\Delta u^i\phi^2dx+\int_{\mathbb R^3}(v\cdot\nabla)u^i\Delta u^i\phi^2dx\nonumber\\
&=\int_{\mathbb{R}^3}\nabla_\alpha W_{p_\alpha^i}\Delta u_i\phi^2dx-\int_{\mathbb{R}^3}\nabla_\alpha\left(u^ku^iW_{p_\alpha^k}\right)\Delta u_i\phi^2dx\nonumber\\
&\quad+\int_{\mathbb{R}^3}W_{p^k_\alpha}u^k\nabla_\alpha u^i\Delta u^i\phi^2dx-\int_{\mathbb R^3}W_{u^i}(\Delta u^i-u^iu^k\Delta u^k)\phi^2dx\\
&\quad+\int_{\mathbb{R}^3} W_{p_\alpha^k}\nabla_\alpha u^ku^i\Delta u^i\phi^2dx.\nonumber
\end{align}
Integrating by parts gives
\begin{align}\label{3.9}
\int_{\mathbb R^3}\p_tu^i\Delta u^i\phi^2dx&=-\int_{\mathbb R^3}\nabla u^i_t\nabla u^i\phi^2dx-2\int_{\mathbb R^3}u_t^i\nabla u^i\phi\nabla\phi dx\\
&=-\frac{1}{2}\frac{d}{dt}\int_{\mathbb R^3}|\nabla u|^2\phi^2dx-2\int_{\mathbb R^3}u_t^i\nabla u^i\phi\nabla\phi dx.\nonumber
\end{align}
Integrating by parts twice and using $\D_{\beta}W_{p_\alpha^i}=W_{p_\alpha^ip_\g^j}\D_{\beta\g}^2u^j+W_{p_\alpha^iu^j}\D_{\beta} u^j$ yield
\begin{align}\label{3.10}
&\quad\int_{\mathbb R^3}\nabla_\alpha W_{p^i_\alpha}\Delta u^i\phi^2dx\nonumber\\
&=\int_{\mathbb R^3}\nabla_\beta W_{p^i_\alpha}\nabla_{\beta\alpha}^2u^i\phi^2dx+2\int_{\mathbb R^3}W_{p_\alpha^i}\left(\nabla_{\beta\alpha}^2u^i\nabla_\beta\phi-\Delta u^i\nabla_\alpha\phi\right)\phi dx\nonumber\\
&=\int_{\mathbb R^3}W_{p_\alpha^ip_\gamma^j}\nabla_{\beta\gamma}^2u^j\nabla_{\beta\alpha}^2u^i\phi^2dx+\int_{\mathbb R^3}W_{p_\alpha^iu^j}\nabla_\beta u^j\nabla_{\beta\alpha}^2u^i\phi^2dx\\
&\quad+2\int_{\mathbb R^3}W_{p_\alpha^i}\left(\nabla_{\beta\alpha}^2u^i\nabla_\beta\phi-\Delta u^i\nabla_\alpha\phi\right)\phi dx.\nonumber
\end{align}
Noting that $-u^i\Delta u^i=|\D u|^2$ due to $|u|=1$, one has
\begin{align}\label{3.11}
&\quad-\int_{\mathbb{R}^3}\nabla_\alpha\left(u^ku^iW_{p_\alpha^k}\right)\Delta u_i\phi^2dx+\int_{\mathbb{R}^3}W_{p^k_\alpha}u^k\nabla_\alpha u^i\Delta u^i\phi^2dx\nonumber\\
&=-\int_{\mathbb R^3}\left(u^k\nabla_\alpha W_{p^k_\alpha}+\nabla_\alpha u^kW_{p^k_\alpha}\right)u^i\Delta u^i\phi^2dx\\
&=\int_{\mathbb R^3}\left(u^kW_{p_\alpha^kp_\beta^j}\nabla_{\alpha\beta}^2 u^j+u^kW_{p^k_\alpha u^j}\nabla_\alpha u^j+\nabla_\alpha u^kW_{p^k_\alpha}\right)|\nabla u|^2\phi^2dx.\nonumber
\end{align}
Substituting \eqref{3.9}-\eqref{3.11} into \eqref{3.8} gives
\begin{align}\label{3.12}
&\frac{1}{2}\frac{d}{dt}\int_{\mathbb R^3}|\nabla u|^2\phi^2dx+\int_{\mathbb R^3}W_{p_\alpha^ip_\gamma^j}\nabla_{\beta\gamma}^2u^j\nabla_{\beta\alpha}^2u^i\phi^2dx\nonumber\\
&=-\int_{\mathbb R^3}W_{p_\alpha^iu^j}\nabla_\beta u^j\nabla_{\beta\alpha}^2u^i\phi^2dx-\int_{\mathbb{R}^3} W_{p_\alpha^k}\nabla_\alpha u^ku^i\Delta u^i\phi^2dx\nonumber\\
&\quad-\int_{\mathbb R^3}\left(u^kW_{p_\alpha^kp_\beta^j}\nabla_{\alpha\beta}^2 u^j+u^kW_{p^k_\alpha u^j}\nabla_\alpha u^j+\nabla_\alpha u^kW_{p^k_\alpha}\right)|\nabla u|^2\phi^2dx\\
&\quad+\int_{\mathbb R^3}W_{u^i}(\Delta u^i-u^iu^k\Delta u^k)\phi^2dx+\int_{\mathbb R^3}v\cdot\nabla u^i\Delta u^i\phi^2dx\nonumber\\
&\quad-2\int_{\mathbb R^3}W_{p_\alpha^i}\left(\nabla_{\beta\alpha}^2u^i\nabla_\beta\phi-\Delta u^i\nabla_\alpha\phi\right)\phi dx-2\int_{\mathbb R^3}u_t^i\nabla u^i\phi\nabla\phi dx.\nonumber
\end{align}
Thus, it follows from \eqref{uni-ell}, \eqref{qua-est} and Young's inequality that
\begin{align}\label{2.12}
&\frac{1}{2}\frac{d}{dt}\int_{\mathbb R^3}|\nabla u|^2\phi^2dx+a\int_{\R^3}|\D^2u|^2\phi^2dx\nonumber\\
&\leq\int_{\R^3}(|\D u|^2+|v||\D u|)|\D^2u|\phi^2dx\\
&\quad+\int_{\R^3}(|v||\D u|+|\D^2u|+|\D u|^2)|\D u|\phi|\D\phi|dx\nonumber\\
&\leq\frac{a}{2}\int_{\R^3}|\D^2u|^2\phi^2dx+C\int_{\R^3}(|v|^4+|\D u|^4)\phi^2dx+C\int_{\R^3}|\D u|^2|\D\phi|^2dx,\nonumber
\end{align}
where we have used $|\D u|^2=|u\cdot\Delta u|\leq |\D^2u|$ and $|\p_tu|\leq|v||\D u|+|\D u|^2+|\D^2u|$.
Summing \eqref{2.2} with \eqref{2.12} and integrating over $[0,t]$, we prove \eqref{loc-est1}.
	\end{proof}
	
\begin{lemma}\label{key-lem2}
	Let $(u, v)$ be a smooth solution to \eqref{O-F-1}-\eqref{O-F-3} in $\mathbb R^3\times(0, T)$. Then for any $\phi\in C_0^\infty(\R^3)$, it holds that, for any $t\in(\tau,T)$,
\begin{align}\label{loc-est2}
&\int_{\R^3}(|\D v(t)|^2+|\D^2 u(t)|^2)\phi^6dx+\int_{\tau}^{t}\int_{\R^3}(|\D^2v|^2+a|\D^3 u|^2)\phi^6dxds\nonumber\\
&\leq \int_{\R^3}(|\D v(\tau)|^2+|\D^2 u(\tau)|^2)\phi^6dx\nonumber\\
&\quad+12\int_{\tau}^{t}\int_{\R^3}(p-c(s))(\D v^i\D_i(\phi^5\D\phi)-\Delta v^i\phi^5\D_i\phi)dxds\\
&\quad+C\int_{\tau}^{t}\int_{\R^3}(|v|^2+|\D u|^2)(|\D v|^2+|\D^2 u|^2)\phi^6dxds\nonumber\\
&\quad+C\int_{\tau}^{t}\int_{\R^3} (|\D^2 u|^2+|\D v|^2)|\D\phi|^2\phi^4dxds, \nonumber
\end{align}
where $c(t)$ is an arbitrary temporal function and $C$ is an absolute constant.	
\end{lemma}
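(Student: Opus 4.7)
The plan is to mirror the proof of Lemma \ref{key-lem1} one derivative higher, using the weight $\phi^6$ so that boundary corrections arising from integration by parts split, via Young's inequality in the form $\phi^5|\D\phi|\leq\varepsilon\phi^6+C_\varepsilon\phi^4|\D\phi|^2$, into either the $\phi^6$-dissipation or the $\phi^4|\D\phi|^2$ remainder displayed in \eqref{loc-est2}.

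\textbf{Step 1 (momentum part).} I test \eqref{O-F-1} with $-\Delta v^i\phi^6$ and integrate over $\R^3$. The time derivative yields $\tfrac12\tfrac{d}{dt}\int|\D v|^2\phi^6\,dx$ up to $\phi^5\D\phi$ boundary corrections; the Laplacian produces $\int|\Delta v|^2\phi^6\,dx$, which controls $\int|\D^2 v|^2\phi^6\,dx$ modulo admissible boundary terms. Using $\D\cdot v=0$ (hence $\D_i\Delta v^i=0$), the pressure term collapses to a pure boundary integral that, after one further integration by parts, can be rewritten as the combination $12\int(p-c)(\D v^i\D_i(\phi^5\D\phi)-\Delta v^i\phi^5\D_i\phi)\,dx$ appearing in \eqref{loc-est2}. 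The convective term $(v\cdot\D)v$ and the director-stress term $\D_j(\D_i u^k W_{p^k_j})$ give rise, after \eqref{qua-est}, to contributions of the schematic type $(v,\D u)\cdot(\D v,\D^2 u)\cdot\phi^6$ plus boundary weights; by Young's inequality these split into a share absorbed by the $\phi^6$-dissipation and a share of the form $(|v|^2+|\D u|^2)(|\D v|^2+|\D^2 u|^2)\phi^6$.

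\textbf{Step 2 (director part).} I test \eqref{O-F-3} with $\Delta^2 u^i\phi^6$ and integrate by parts twice, expanding $\D_\beta W_{p_\alpha^i}=W_{p_\alpha^i p_\gamma^j}\D^2_{\beta\gamma}u^j+W_{p_\alpha^i u^j}\D_\beta u^j$ exactly as in \eqref{3.10}. The time-derivative piece produces $\tfrac12\tfrac{d}{dt}\int|\D^2 u|^2\phi^6\,dx$ up to $\phi^5\D\phi$-corrections, while the Legendre--Hadamard inequality \eqref{uni-ell} applied to the top-order quadratic form yields the coercive dissipation $a\int|\D^3 u|^2\phi^6\,dx$. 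The constraint correction $\D_\alpha(u^k u^i W_{p_\alpha^k})$ together with the force terms $W_{p_\alpha^k}(\D_\alpha u^k u^i+u^k\D_\alpha u^i)$ are rearranged via the constraint identities $|u|=1$, $u^i\D u^i=0$, $u\cdot\Delta u=-|\D u|^2$ (cancelling the only potentially uncontrolled third-derivative contribution), in complete analogy with the manipulations leading to \eqref{3.11}--\eqref{3.12}. All remaining lower-order terms, including those from $W_{u^i}$ and the transport $v\cdot\D u$, are bounded using \eqref{qua-est} and distributed by Young's inequality into the $\phi^6$-dissipation and the stated remainders.

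\textbf{Step 3.} Summing the two differential identities, integrating over $[\tau,t]$, and grouping boundary contributions through $\phi^5|\D\phi|\leq\varepsilon\phi^6+C_\varepsilon\phi^4|\D\phi|^2$ yields \eqref{loc-est2}. The main technical obstacle is the bookkeeping in Step 2: two integrations by parts against the quasilinear Oseen--Frank operator generate a long list of cross terms, each involving up to three derivatives on $u$ together with derivatives of the $u$-dependent coefficients $W_{p_\alpha^i p_\gamma^j}$, $W_{p_\alpha^i u^j}$, $W_{u^i}$. Matching each of these with either the coercive $\phi^6$-dissipation or the $(|v|^2+|\D u|^2)(|\D v|^2+|\D^2 u|^2)\phi^6$ remainder, without assuming smallness of or equalities between the Frank constants $k_i$, is precisely where the unit-length constraint $|u|=1$ is indispensable.
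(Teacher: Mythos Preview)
Your outline is essentially the paper's proof: test \eqref{O-F-1} against $-\Delta v^i\phi^6$, differentiate \eqref{O-F-3} and test against $-\nabla_\beta\Delta u^i\phi^6$ (equivalent, up to further $\phi^5\nabla\phi$ terms, to your testing with $\Delta^2 u^i\phi^6$), invoke \eqref{uni-ell} for the coercive $a\int|\nabla^3 u|^2\phi^6$, and close the lower-order terms via \eqref{qua-est} together with the constraint identities from $|u|=1$.

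One point in Step~1 needs sharpening. The two pressure pieces in \eqref{loc-est2} do not both come from the $\nabla p$ term of \eqref{O-F-1} ``after one further integration by parts.'' The piece $6\int(p-c)\Delta v^i\phi^5\nabla_i\phi$ does arise directly from $\int\nabla_i p\,\Delta v^i\phi^6$ and $\nabla\cdot v=0$. The companion piece $6\int(p-c)\nabla v^i\nabla_i(\phi^5\nabla\phi)$, however, comes from the time-derivative boundary correction $6\int v_t^i\nabla v^i\phi^5\nabla\phi$: since $v_t$ is not itself in the available energy, the paper substitutes the full right-hand side of \eqref{O-F-1} for $v_t^i$ there, which reintroduces $p$, the convection, and the director stress in boundary form. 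The analogous substitution of the differentiated \eqref{O-F-3} for $\partial_t\nabla_\beta u^i$ (giving $|\partial_t\nabla u|\le C(|\nabla v||\nabla u|+|v||\nabla^2 u|+|\nabla u||\nabla^2 u|+|\nabla^3 u|)$) is what makes your ``up to $\phi^5\nabla\phi$ boundary corrections'' in Step~2 tractable. With this substitution step made explicit, your argument coincides with the paper's.
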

\begin{proof}
Multiplying \eqref{O-F-1} by $\Delta v^i\phi^6$ and integrating over $\mathbb R^3$, we have
\begin{align}\label{1}
&\int_{\mathbb R^3}\p_tv^i\Delta v^i\phi^6dx-\int_{\mathbb{R}^3}|\Delta v|^2\phi^6dx=-\int_{\mathbb R^3}v\cdot\nabla v^i\Delta v^i\phi^6dx\\
&\qquad-\int_{\mathbb R^3}\nabla_ip\Delta v^i\phi^6dx-\int_{\mathbb R^3}\nabla_j\left(\nabla_i u^kW_{p_j^k}\right)\Delta v^i\phi^6dx.\nonumber
\end{align}
It follows from integrating by parts, using \eqref{O-F-1} and \eqref{O-F-2} that
\begin{align}\label{2}
\int_{\mathbb R^3}v_t^i\Delta v^i\phi^6dx&=-\frac{1}{2}\frac{d}{dt}\int_{\mathbb R^3}|\nabla v|^2\phi^6dx+6\int_{\mathbb R^3}v_t^i\nabla v^i\phi^5\nabla\phi dx\nonumber\\
&=-\frac{1}{2}\frac{d}{dt}\int_{\mathbb R^3}|\nabla v|^2\phi^6dx-6\int_{\mathbb R^3}v\cdot\nabla v^i\nabla v^i\phi^5\nabla \phi dx\nonumber\\
&\quad+6\int_{\mathbb R^3}(p-c)\nabla v^i\nabla_i(\phi^5\nabla \phi) dx+6\int_{\mathbb R^3}\Delta v^i\nabla v^i\phi^5\nabla \phi dx\\
&\quad-6\int_{\mathbb R^3}\nabla_j(\nabla_iu^kW_{p_j^k})\nabla v^i\phi^5\nabla \phi dx,\nonumber
\end{align}
\begin{equation}\label{3}
-\int_{\mathbb{R}^3}|\Delta v|^2\phi^6dx=-\int_{\mathbb R^3}|\nabla^2v|\phi^6dx-6\int_{\mathbb R^3}\nabla_iv(\nabla_{ij}^2v\nabla_j\phi-\Delta v\nabla_i\phi)\phi^5dx
\end{equation}
and
\begin{equation}\label{4}
\int_{\mathbb R^3}\nabla_ip\Delta v^i\phi^6dx=-6\int_{\mathbb R^3}(p-c)\Delta v^i\phi^5\nabla_i\phi dx,
\end{equation}
where $c(t)\in \R$ is a temporal function to be chosen later. Substituting \eqref{2}-\eqref{4} into \eqref{1}, and using \eqref{qua-est} and Young's inequality, we obtain
\begin{align}\label{2.14}
&\frac{1}{2}\frac{d}{dt}\int_{\mathbb R^3}|\nabla v|^2\phi^6dx+\int_{\mathbb R^3}|\nabla^2v|^2\phi^6dx\nonumber\\
&=6\int_{\mathbb R^3}(p-c)\left(\nabla v^i\nabla_i(\phi^5\nabla \phi)-\Delta v^i\phi^5\nabla_i\phi\right) dx\nonumber\\
&\quad+\int_{\mathbb R^3}\left(v\cdot\D v^i+\nabla_j(\nabla_iu^kW_{p_j^k})\right)\left(\Delta v^i\phi^6-6\nabla v^i\phi^5\nabla \phi\right) dx\\
&\quad+6\int_{\mathbb R^3}\Delta v^i\nabla v^i\phi^5\nabla \phi-6\int_{\mathbb R^3}\nabla_iv(\nabla_{ij}^2v\nabla_j\phi-\Delta v\nabla_i\phi)\phi^5dx\nonumber\\
&\leq\frac{1}{2}\int_{\R^3}|\D^2v|^2\phi^6dx+6\int_{\mathbb R^3}(p-c)\left(\nabla v^i\nabla_i(\phi^5\nabla \phi)-\Delta v^i\phi^5\nabla_i\phi\right) dx\nonumber\\
&\quad+C\int_{\R^3}(|v|^2|\D v|^2+|\D u|^2|\D^2u|^2)\phi^6dx+C\int_{\R^3}|\D v|^2|\D\phi|^2\phi^4dx.\nonumber
\end{align}
Differentiating \eqref{O-F-3} in $x_\beta$, multiplying the resulting equation by $-\nabla_\beta\Delta u^i\phi^6$ and integrating over $\R^3$ give
\begin{align}\label{2.15}
&-\int_{\mathbb R^3}\partial_t\nabla_\beta u^i\nabla_\beta\Delta u^i\phi^6dx+\int_{\mathbb{R}^3}\nabla_{\beta\alpha}^2W_{p_\alpha^i}\nabla_\beta\Delta u^i\phi^6dx\nonumber\\
&=\int_{\mathbb R^3}\nabla_\beta\left(\nabla_\alpha(u^ku^iW_{p^k_\alpha})-W_{p^k_\alpha}u^k\nabla_\alpha u^i\right)\nabla_\beta\Delta u^i\phi^6dx\nonumber\\
&\quad+\int_{\mathbb R^3}\nabla_\beta(v\cdot\nabla u^i)\nabla_\beta\Delta u^i\phi^6dx+\int_{\mathbb{R}^3}\nabla_\beta W_{u^i}\nabla_\beta\Delta u^i\phi^6dx\\
&\quad-\int_{\mathbb R^3}\nabla_\beta\left(W_{u^k}u^ku^i+W_{p^k_\alpha}\nabla_\alpha u^ku^i\right)\nabla_\beta\Delta u^i\phi^6dx.\nonumber
\end{align}
Integrating by parts yields that
\begin{align}\label{2.16}
-\int_{\mathbb R^3}\partial_t\nabla_\beta u^i\nabla_\beta\Delta u^i\phi^6dx=\frac{1}{2}\frac{d}{dt}\int_{\mathbb R^3}|\nabla^2 u|^2\phi^6dx+6\int_{\mathbb R^3}\partial_t\nabla_\beta u^i\nabla_{\beta\alpha}^2u^i\phi^5\nabla_\alpha \phi dx.
\end{align}
It follows from Young's inequality that
\begin{align}\label{2.17}
&6\int_{\mathbb R^3}\partial_t\nabla_\beta u^i\nabla_{\beta\alpha}^2u^i\phi^3\nabla_\alpha\phi dx\nonumber\\
&\leq \frac{a}{8}\int_{\R^3}|\D^3u|^2\phi^6dx+C\int_{\R^3}(|\D v|^2+|\D^2u|^2)|\D\phi|^2\phi^4dx\\
&\quad+C\int_{\R^3}(|v|^2+|\D u|^2)|\D^2u|^2\phi^6dx,\nonumber
\end{align}
where we have used
$$|\p_t\D u|\leq C(|\D v||\D u|+|v||\D^2u|+|\D u||\D^2u|+|\D^3 u|),$$
which follows from \eqref{O-F-3} and \eqref{qua-est}.
Integrating by parts twice, we note
\begin{align}\label{2.18}
\int_{\mathbb{R}^3}\nabla_{\beta\alpha}^2W_{p_\alpha^i}\nabla_\beta\Delta u^i\phi^4dx=&6\int_{\mathbb R^3}\nabla_\beta W_{p_\alpha^i}(\nabla_{\alpha\beta\gamma}^3u^i\nabla_\gamma\phi-\nabla_\beta\Delta u^i\D_{\alpha}\phi)\phi^5dx\\
&+\int_{\mathbb{R}^3}\nabla_{\gamma\beta}^2W_{p_\alpha^i}\nabla_{\alpha\beta\gamma}^3 u^i\phi^6dx.\nonumber
\end{align}
Since $W(u,p)$ is quadratic in $u$ and $p$, we have
\begin{align*}
\nabla^2_{\g \b}W_{p_{\a}^i}=& \nabla_{\g}\left(
W_{u^jp_{\a}^i}(u,\nabla u) \nabla_{\b}
u^j+W_{p_{\a}^i}(u,\nabla \nabla_{\b}u)\right)\\
=&W_{u^jp_{\a}^i}(u,\nabla u) \nabla^2_{\g\b}
u^j+W_{u^jp_{\a}^i}(\nabla_\gamma u,\nabla u)\nabla_{\b}
u^j\nonumber\\
&+W_{u^jp_{\a}^i}(u,\nabla_\gamma\nabla u)\nabla_{\b}
u^j+W_{u^jp_{\a}^i}(u,\nabla \nabla_{\b}u) \nabla_{\g} u^j\\
&+W_{p_l^jp_{\a}^i}(u,\nabla \nabla_{\b}u) \nabla^3_{\b \g l}
u^j.
 \end{align*}
It follows from \eqref{uni-ell} that
\begin{align}\label{2.19}
\int_{\mathbb R^3}W_{p_\alpha^ip_l^j}(u,\nabla_\beta\nabla u)\nabla_{\beta\gamma l}^3u^j\nabla_{\alpha\beta\gamma}^3u^i\phi^6dx
\geq a\int_{\mathbb R^3}|\nabla^3u|^2\phi^6dx.
\end{align}
By using \eqref{qua-est} and Young's inequality, one has
\begin{align}\label{2.20}
&\int_{\mathbb R^3}[W_{u^jp_\alpha^i}(u,\nabla u)\nabla_{\gamma\beta}u^j+W_{u^jp_\alpha^i}(u,\nabla_\gamma\nabla u)\nabla_\beta u^j\nonumber\\
&\qquad+W_{u^jp_{\a}^i}(\nabla_\gamma u,\nabla u)\nabla_{\b}
u^j+W_{u^jp_\alpha^i}(u,\nabla_\beta\nabla u)\nabla_\gamma u^j]\nabla_{\alpha\beta\gamma}^3u^i\phi^6dx\\
&\leq\frac{a}{16}\int_{\mathbb R^3}|\nabla^3u|^2\phi^6dx+C\int_{\mathbb R^3}|\nabla u|^2|\nabla^2u|^2\phi^6dx,\nonumber
\end{align}
where we have used the fact $|\nabla u|^4=|\Delta u|^2$ for $|u|=1$. It is clear that
\begin{align}\label{2.21}
&6\int_{\mathbb R^3}\nabla_\beta W_{p_\alpha^i}(\nabla_{\alpha\beta\gamma}^3u^i\nabla_\gamma\phi-\nabla_\beta\Delta u^i\D_{\alpha}\phi)\phi^5dx\nonumber\\
&\leq C\int_{\mathbb R^3}(|\nabla u|^2+|\nabla^2 u|)|\nabla^3u||\nabla\phi|\phi^5dx\\
&\leq\frac{a}{16}\int_{\mathbb R^3}|\nabla^3u|^2\phi^6dx+C\int_{\mathbb R^3}(\nabla^2u|^2+|\nabla u|^4)|\nabla\phi|^2\phi^4dx.\nonumber
\end{align}
Substituting \eqref{2.19}-\eqref{2.21} into \eqref{2.18}, we have
\begin{align}\label{2.22}
&\int_{\mathbb{R}^3}\nabla_{\beta\alpha}^2W_{p_\alpha^i}\nabla_\beta\Delta u^i\phi^6dx\\
&\geq\frac{7a}{8}\int_{\R^3}|\D^3u|^2\phi^6dx-C\int_{\R^3}|\D u|^2|\D^2u|^2\phi^6dx-C\int_{\R^3}|\D ^2u|^2|\D\phi|^2\phi^4dx.\nonumber
\end{align}
For the terms on the right hand of \eqref{2.15}, since $|u|^2=1$ implies
$$u^i\nabla_\beta\Delta u^i+\nabla_\beta u^i\Delta u^i=-\nabla_\beta|\nabla u|^2,$$
one has
\begin{align}\label{2.23}
&\int_{\mathbb R^3}\nabla_\beta\left(\nabla_\alpha(u^ku^iW_{p^k_\alpha})+W_{p^k_\alpha}u^k\nabla u^i\right)\nabla_\beta\Delta u^i\phi^6dx\nonumber\\
&=\int_{\mathbb R^3}\nabla_{\alpha\beta}^2\left(u^kW_{p_\alpha^k}\right)u^i\nabla_\beta\Delta u^i\phi^6dx+\int_{\mathbb R^3}\nabla_\beta u^i\nabla_{\alpha}\left(u^kW_{p_\alpha^k}\right)\nabla_\beta\Delta u^i\phi^6dx\\
&\leq\frac{a}{8}\int_{\mathbb R^3}|\nabla^3u|^2\phi^6dx+C\int_{\mathbb R^3}|\nabla u|^2|\nabla^2u|^2\phi^6dx.\nonumber
\end{align}
From \eqref{qua-est}, other remaining terms on the right hand of \eqref{2.15} can be easily controlled by
\begin{align}\label{2.24}
\frac{a}{8}\int_{\mathbb R^3}|\nabla^3u|^2\phi^6dx+C\int_{\mathbb R^3}(|\nabla^2u|^2|\nabla u|^2+|v|^2|\nabla^2u|^2+|\nabla v|^2|\nabla u|^2)\phi^6dx.
\end{align}
Substituting \eqref{2.16}, \eqref{2.17} and \eqref{2.22}-\eqref{2.24} into \eqref{2.15}, we have
\begin{align}\label{2.25}
&\frac{1}{2}\frac{d}{dt}\int_{\R^3}|\D^2u|^2\phi^6dx+\frac{a}{2}\int_{\R^3}|\D^3u|^2\phi^6dx\\
&\leq C\int_{\R^3}(|v|^2+|\D u|^2)|\D^2u|^2\phi^6dx+C\int_{\R^3}|\D^2 u||\D\phi|^2\phi^4dx.\nonumber
\end{align}
Summing \eqref{2.14} with \eqref{2.25} and integrating over $[\tau,t]$ give (\ref{loc-est2}).
\end{proof}

The following lemma gives local estimates of pressure under the smallness assumption of $L^3$ norm of $(v,\D u)$. The idea of proof, which has been used in \cite{LR,HW}, relies on the well-known Calderon-Zygmund  theory \cite{Stein} and covering arguments.
\begin{lemma}\label{key-lemma3}
	Let $(u, v)$ be a smooth solution to \eqref{O-F-1}-\eqref{O-F-3} in $\mathbb R^3\times(0, T)$ and $\phi$ be a cut-off function satisfying $0\leq \phi\leq 1$, $\text{supp } \phi\subset B_R(x_0)$ for some $x_0\in \R^3$ and $|\D \phi|\leq\frac{C}{R}$. Assume that
	\begin{equation}
	\sup_{0\leq s\leq t,y\in\R^3}\int_{B_R(y)}|\D u(x,s)|^3+|v(x,s)|^3dx\leq \varepsilon_1^3.
	\end{equation}
	Then, there exists a $c(t)\in \R$ such that the pressure $p$ satisfies the follow estimates
	\begin{align}
	&\int_{0}^{t}\int_{\R^3}(p-c(s))^\frac{3}{2}\phi^\frac{3}{2}dxds\leq C\varepsilon_1^3t,\label{pres1}\\
	&\int_{0}^{t}\int_{\R^3}(p-c(s))^2\phi^2dxds\leq C\varepsilon_1^2\int_{0}^{t}\int_{\R^3}(|\D^2u|^2+|\D v|^2)\phi^2dxds\label{pres2}\\
	&\qquad\qquad\qquad\qquad\qquad\qquad+\frac{C\varepsilon_1^2t}{R^2}\Big(\sup_{0\leq s\leq t,y\in\R^3}\int_{B_{\frac{R}{2}}(y)}|\D u|^2+|v|^2dx\Big),\nonumber\\
	&\int_{\tau}^{t}\int_{\R^3}(p-c(s))^3\phi^6dxds\leq C\int_{\tau}^{t}\int_{\R^3}(|\D u|^6+|v|^6)\phi^6dx+\frac{C(t-\tau)}{R^3}\varepsilon_1^6.\label{pres3}
	\end{align}
\end{lemma}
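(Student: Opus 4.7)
My plan is to decompose the pressure into a ``local'' piece governed by Riesz transforms and a harmonic remainder. Taking the divergence of \eqref{O-F-1} and using \eqref{O-F-2}, $p$ satisfies the elliptic equation
\begin{equation*}
-\Delta p = \partial_i\partial_j F_{ij},\qquad F_{ij}:=v^iv^j+\nabla_i u^k W_{p_j^k}(u,\nabla u),
\end{equation*}
with $|F_{ij}|\leq C(|v|^2+|\nabla u|^2)$ by \eqref{qua-est}. I fix a cut-off $\eta\in C_c^\infty(B_{2R}(x_0))$ with $\eta\equiv 1$ on $B_{R}(x_0)\supset\operatorname{supp}\phi$ and $|\nabla\eta|\leq CR^{-1}$, and set $p_1:=-\mathcal R_i\mathcal R_j(\eta F_{ij})$ (with $\mathcal R_i$ the Riesz transforms) and $p_2:=p-p_1$. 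Since $(1-\eta)F_{ij}$ is supported outside $B_R(x_0)$, $p_2(\cdot,t)$ is harmonic on $B_R(x_0)$. The constant $c(t)$ in the statement will absorb the additive ambiguity of $p$ together with the average of $p_2$ on a suitable ball.

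For the local piece $p_1$, the Calder\'on--Zygmund theorem gives, for every $1<q<\infty$,
\begin{equation*}
\|p_1(\cdot,t)\|_{L^q(\R^3)}\leq C\|\eta F\|_{L^q(\R^3)}\leq C\bigl\||v|^2+|\nabla u|^2\bigr\|_{L^q(B_{2R}(x_0))}.
\end{equation*}
Taking $q=3/2$ and covering $B_{2R}(x_0)$ by a bounded number of balls of radius $R$ on which the $L^3$-smallness holds yields $\int_0^t\!\int|p_1|^{3/2}\phi^{3/2}\,dx\,ds\leq C\varepsilon_1^3 t$, giving \eqref{pres1}. Taking $q=2$ and combining H\"older's inequality $\int|f|^4\leq\|f\|_{L^3}^2\|f\|_{L^6}^2$ with the local Sobolev embedding $\|f\|_{L^6(B_R)}^2\leq C\bigl(\|\nabla f\|_{L^2(B_R)}^2+R^{-2}\|f\|_{L^2(B_R)}^2\bigr)$ for $f\in\{v,\nabla u\}$ produces, after covering, the two terms on the right of \eqref{pres2}. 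Taking $q=3$ gives $\|p_1\|_{L^3}^3\leq C\int_{B_{2R}}(|v|^6+|\nabla u|^6)\,dx$, which after covering contributes the first term of \eqref{pres3} together with an $R^{-3}$-error accounting for the discrepancy between $\operatorname{supp}\phi$ and $B_{2R}(x_0)$.

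For the harmonic remainder $p_2$, the interior estimate for harmonic functions gives, for any $1<q<\infty$,
\begin{equation*}
\|p_2(\cdot,t)-c(t)\|_{L^\infty(B_{R/2}(x_0))}\leq CR^{-3/q}\|p_2(\cdot,t)-c(t)\|_{L^q(B_R(x_0))}.
\end{equation*}
Choosing $c(t)$ so that $p_2(\cdot,t)-c(t)$ has zero mean on $B_R(x_0)$ and using $p_2=p-p_1$ together with the freedom to add a time-dependent constant to $p$, I can bound the right-hand side by $\|p_1\|_{L^q(B_R(x_0))}$, already controlled in the preceding paragraph. Integrating the resulting pointwise bound over $\operatorname{supp}\phi$ multiplies by a volume factor of order $R^3$, which combined with the prefactor $R^{-3/q}$ produces exactly the $R^{-2}$ and $R^{-3}$ scalings of the error terms in \eqref{pres2}--\eqref{pres3}. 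Finally, integrating in time over $[0,t]$ or $[\tau,t]$ completes the argument.

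The main difficulty is the bookkeeping of cut-offs and the precise choice of $c(t)$ so that the oscillation of the harmonic remainder is controlled by the norm of the local piece without introducing uncontrolled dependence on $p$ itself. Tracking the powers of $R$ through the Sobolev/Gagliardo--Nirenberg step is what fixes the $R^{-2}$ and $R^{-3}$ weights of the error terms in \eqref{pres2} and \eqref{pres3}, respectively.
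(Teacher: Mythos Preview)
Your decomposition $p=p_1+p_2$ with $p_1=-\mathcal R_i\mathcal R_j(\eta F_{ij})$ and $p_2$ harmonic on $B_R(x_0)$ is a perfectly reasonable variant of the commutator decomposition the paper uses, and your treatment of $p_1$ via Calder\'on--Zygmund and the interpolation $\int|f|^4\leq\|f\|_{L^3}^2\|f\|_{L^6}^2$ is essentially the same as the paper's. However, the step where you handle the harmonic remainder contains a genuine gap.

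You assert that, after subtracting its mean and using the additive freedom in $p$, the quantity $\|p_2-c(t)\|_{L^q(B_R)}$ can be bounded by $\|p_1\|_{L^q(B_R)}$. This is not correct: $p_2$ is determined by the \emph{far-field} data $(1-\eta)F_{ij}$, and no amount of normalization lets you control its oscillation on $B_R$ in terms of the purely local piece $p_1$. (Consider $F$ supported outside $B_{2R}(x_0)$: then $p_1\equiv 0$, yet $p_2$ can be a large nonconstant harmonic function on $B_R$.) What is actually needed is the \emph{global} hypothesis $\sup_{y}\int_{B_R(y)}(|\nabla u|^3+|v|^3)\leq\varepsilon_1^3$: writing $p_2-p_2(x_0)$ via the Riesz kernel and using the cancellation estimate $|K_{ij}(x-y)-K_{ij}(x_0-y)|\leq C|x-x_0|\,|x_0-y|^{-4}$ for $|y-x_0|\geq 2R$, one sums over dyadic shells and the $L^3_{uloc}$ bound to obtain $\sup_{B_R(x_0)}|p_2-c(t)|\leq CR^{-2}\varepsilon_1^2$. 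This is precisely what the paper does for its far piece $f_2$ (with $c(t)$ chosen as the far-field integral evaluated at $x_0$), and it is the step that produces the $R^{-2}$ and $R^{-3}$ error terms in \eqref{pres2}--\eqref{pres3}. Your interior harmonic estimate is fine, but it must be fed with this far-field bound, not with $\|p_1\|_{L^q}$.

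A secondary point: because you localize with $\eta$ supported in $B_{2R}(x_0)$ rather than with $\phi$ itself inside the Riesz transform, your $p_1$ bounds come out as integrals over $B_{2R}(x_0)$ rather than as $\phi$-weighted integrals. The paper instead writes $(p-c)\phi=\mathcal R_i\mathcal R_j(F\phi)+[\phi,\mathcal R_i\mathcal R_j]F-c\phi$, which directly yields $\int(|\nabla^2u|^2+|\nabla v|^2)\phi^2$ on the right of \eqref{pres2}; this weighted form is what is later absorbed into the dissipation in Proposition~\ref{prop1}. Your version can still be made to work via an additional covering argument, but as written it does not reproduce the stated right-hand sides.
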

\begin{proof}
Taking divergence in both sides of \eqref{O-F-1},  the pressure $p$ satisfies the elliptic equation
\begin{equation*}
-\Delta p=\nabla_{ij}[\nabla_iu^kW_{p_j^k}+v^jv^i],\quad\text{on}~~\mathbb{R}^3\times[0,T],
\end{equation*}
which implies
$$p=\mathcal{R}_i\mathcal{R}_j(F^{ij}),\quad F^{ij}:=\nabla_iu^kW_{p_j^k}+v^jv^i,$$
where $\mathcal{R}_i$ is the $i$-th Riesz transform on $\mathbb R^3$. Then, we have
\begin{equation}\label{3.21}
(p-c)\phi=\mathcal{R}_i\mathcal{R}_j(F^{ij}\phi)+[\phi,\mathcal{R}_i\mathcal{R}_j](F^{ij})-c\phi
\end{equation}for a cut-off function $\phi$,
where the commutator $[\phi,\mathcal{R}_i\mathcal{R}_j]$ is defined by
$$[\phi,\mathcal{R}_i\mathcal{R}_j](\cdot)=\phi\mathcal{R}_i\mathcal{R}_j(\cdot)-\mathcal{R}_i\mathcal{R}_j(\cdot\,\phi).$$
Since $|F^{ij}|\leq C(|\D u|^2+|v|^2)$ and the Riesz operator maps $L^q$ into $L^q$ spaces for any $1<q<+\infty$, we have
\begin{align}\label{pre-1-1}
&\int_{0}^{t}\int_{\R^3}|\mathcal{R}_i\mathcal{R}_j(F^{ij}\phi)|^\frac{3}{2}dxds\leq \int_{0}^{t}\int_{\R^3}(|\D u|^3+|v|^3)\phi^\frac{3}{2}dxds\leq \varepsilon_1^3t,
\end{align}
\begin{align}\label{pre-1-2}
&\int_{0}^{t}\int_{\R^3}|\mathcal{R}_i\mathcal{R}_j(F^{ij}\phi)|^2dxds\leq  \int_{0}^{t}\int_{\R^3}(|\D u|^4+|v|^4)\phi^2dxds\\
&\leq\int_{0}^{t}\left(\int_{B_R(x_0)}|\D u|^3+|v|^3dx\right)^\frac{2}{3}\left(\int_{\R^3}|\D u\phi|^6+|v\phi|^6dx\right)^\frac{1}{3}ds\nonumber\\
&\leq C\varepsilon_1^2\left(\int_{0}^{t}\int_{\R^3}(|\D^2u|^2+|\D v|^2)\phi^2dxds+\int_{0}^{t}\int_{\R^3}(|\D u|^2+| v|^2)|\D\phi|^2dxds\right)\nonumber\\
&\leq C\varepsilon_1^2\int_{0}^{t}\int_{\R^3}(|\D^2u|^2+|\D v|^2)\phi^2dxds+\frac{C\varepsilon_1^2t}{R^2}\sup_{0\leq s\leq t,y\in\R^3}\int_{B_\frac{R}{2}(y)}|\D u|^2+|v|^2dx\nonumber
\end{align}
and
\begin{align}\label{pre-1-3}
\int_{\tau}^{t}\int_{\R^3}|\mathcal{R}_i\mathcal{R}_j(F^{ij}\phi^2)|^3dxds\leq  \int_{\tau}^{t}\int_{\R^3}(|\D u|^6+|v|^6)\phi^6dxds,
\end{align}
where we have used the covering of ball $B_R(x_0)$ by a fixed number of balls with radius $\frac{R}{2}$ in the last step of \eqref{pre-1-2}.

Now, we
estimate  the commutator. Since $\text{supp}\, \phi\subset B_R(x_0)$, the commutator can be expressed as
\begin{align}\label{com}
&[\phi,\mathcal{R}_i\mathcal{R}_j](F^{ij})(x,t)-c(t)\phi(x)\nonumber\\
&=\int_{\mathbb R^3}\frac{(\phi(x)-\phi(y))(x_i-y_i)(x_j-y_j)}{|x-y|^5}F^{ij}(y,t)dy-c(t)\phi(x)\nonumber\\
&=\int_{B_{2R}(x_0)}\frac{(\phi(x)-\phi(y))(x_i-y_i)(x_j-y_j)}{|x-y|^5}F^{ij}(y,t)dy\\
&\quad+\phi(x)\Big[\int_{\mathbb R^3\backslash B_{2R}(x_0)}\frac{(x_i-y_i)(x_j-y_j)}{|x-y|^5}F^{ij}(y,t)dy-c(t)\Big]\nonumber\\
&=:f_1(x,t)+f_2(x,t).\nonumber
\end{align}
Note that
\begin{align*}
|f_1(x,t)|\leq CR^{-1}\int_{\mathbb R^3}\frac{(|\nabla u|^2+|v|^2)\chi_{B_{2R(x_0)}}}{|x-y|^2}dy,
\end{align*}
and   the Hardy-Littlewood-Sobolev inequality hold by
$$\|I_\alpha(f)\|_{L^q(\R^n)}\leq C\|f\|_{L^r(\R^n)},\quad\frac{1}{q}=\frac{1}{r}-\frac{\alpha}{n}, $$
where $I_\alpha(f):=\int_{\R^n}\frac{f(y)}{|x-y|^{n-\alpha}}dy$.
Then it follows from H\"{o}lder's inequality and a standard covering argument that
\begin{align}\label{prec1-1}
&\int_{0}^{t}\int_{\R^3}|f_1(x,s)|^\frac{3}{2}dxds\nonumber\\
&\leq CR^{-\frac{3}{2}}\int_{0}^{t}\|(|\D u|^2+|v|^2)\chi_{B_{2R}(x_0)}\|_{L^1(\R^3)}^\frac{3}{2}ds\nonumber\\
&\leq C\int_{0}^{t}\int_{B_{2R(x_0)}}|\D u|^3+|v|^3dxds\\
&\leq Ct\sup_{0\leq s\leq t,y\in\R^3}\int_{B_{\frac{R}{2}}(y)}|\D u|^3+|v|^3dx\leq C\varepsilon_1^3t.\nonumber
\end{align}
Similarly, we have
\begin{align}\label{prec1-2}
&\int_{0}^{t}\int_{\R^3}|f_1(x,s)|^2dxds\leq CR^{-2}\int_{0}^{t}\|(|\nabla u|^2+|v|^2)\chi_{B_{2R(x_0)}}\|_{L^\frac{6}{5}(\R^3)}^2ds\\
&\leq CR^{-2}\int_{0}^{t}\|(|\D u|+|v|)\chi_{B_{2r}(x_0)}\|_{L^3(\R^3)}^2\|(|\D u|+|v|)\chi_{B_{2r}(x_0)}\|_{L^2(\R^3)}^2ds\nonumber\\
&\leq \frac{C\varepsilon_1^2}{R^2}\int_{0}^{t}\int_{B_{2R}(x_0)}(|\D u|^2+|v|^2)dx\leq \frac{C\varepsilon_1^2t}{R^2}\sup_{0\leq s\leq t,y\in\R^3}\int_{B_{\frac{R}{2}}(y)}(|\D u|^2+|v|^2)dx\nonumber
\end{align}
and
\begin{align}\label{prec1-3}
&\int_{\tau}^{t}\int_{\R^3}|f_1(x,s)|^3dxds\leq CR^{-3}\int_{\tau}^{t}\|(|\D u|^2+|v|^2)\chi_{B_{2R}(y)}\|_{L^\frac{3}{2}(\R^3)}^3ds\nonumber\\
&\leq CR^{-3}\int_{\tau}^{t}\left(\int_{B_{2R}(y)}|\D u|^3+|v|^3dx\right)^2ds\\
&\leq \frac{C(t-\tau)}{R^3}(\sup_{\tau\leq s\leq t,x_0\in\R^3}\int_{B_{\frac{R}{2}}(x_0)}|\D u|^3+|v|^3dx)^2\leq \frac{C\varepsilon_1^6(t-\tau)}{R^3}.\nonumber
\end{align}
As in Lemma 3.2 of \cite{HW}, to estimate the term involving $f_2(x,t)$, we choose
$$c(t)=\int_{\mathbb R^3\backslash B_{2R}(x_0)}\frac{(x_{0i}-y_i)(x_{0j}-y_j)}{|x_0-y|^5}F^{ij}(y,t)dy,$$
which is finite for any approximation data $(v,u)\in L^2(\R^3)\times \dot{H}^1(\R^3)$. Then, one has
\begin{align*}
|f_2(x,t)|&\leq\Big|\phi(x)\int_{\mathbb R^3\backslash B_{2R}(x_0)}\big(\frac{(x_{i}-y_i)(x_{j}-y_j)}{|x-y|^5}-\frac{(x_{0i}-y_i)(x_{0j}-y_j)}{|x_0-y|^5}\big)F^{ij}(y,t)dy\Big|\nonumber\\
&\leq CR\phi(x)\int_{\mathbb R^3\backslash B_{2R}(x_0)}\frac{(|\nabla u|^2+|v|^2)(y)}{|x_0-y|^4}dy
\end{align*}
due to the fact (c.f. \cite{Stein}) that
\begin{align*}
|\frac{(x_{i}-y_i)(x_{j}-y_j)}{|x_-y|^5}-\frac{(x_{0i}-y_i)(x_{0j}-y_j)}{|x_0-y|^5}|\leq C \frac{|x_0-x|}{|x_0-y|^4}.
\end{align*}
Using H\"{o}lder's inequality and a standard covering argument, we obtain
\begin{align*}
|f_{2}(x,t)|\leq CR^{-3}\phi(x)\sup_{y\in\mathbb R^3}\int_{B_R(y)}(|\nabla u|^2+|v|^2)dx
\end{align*}
which implies
\begin{align}\label{prec2-1}
&\int_{0}^{t}\int_{\R^3}|f_{2}(x,t)|^\frac{3}{2}dxds\leq C\int_{0}^{t}\int_{\R^3}\left(R^{-3}\phi(x)\sup_{y\in\mathbb R^3}\int_{B_R(y)}(|\nabla u|^2+|v|^2)\right)^\frac{3}{2}dxds\\
&\leq CR^{-3}\int_{0}^{t}\int_{\R^3}\phi^\frac{3}{2}\left(\sup_{y\in\R^3}(\int_{B_R(y)}|\D u|^3+|v|^3)^\frac{2}{3}\right)^\frac{3}{2}dxds\leq C\varepsilon_1^3t,\nonumber
\end{align}
\begin{align}\label{prec2-2}
&\int_{0}^{t}\int_{\R^3}|f_{2}(x,t)|^2dxds\leq C\int_{0}^{t}\int_{\R^3}\left(R^{-3}\phi(x)\sup_{y\in\mathbb R^3}\int_{B_R(y)}(|\nabla u|^2+|v|^2)\right)^2dxds\\
&\leq CR^{-5}\varepsilon_1^2\int_{0}^{t}\int_{\R^3}\phi^2\left(\sup_{y\in\mathbb R^3}\int_{B_{\frac{R}{2}}(y)}(|\nabla u|^2+|v|^2)\right)dxds\nonumber\\
&\leq \frac{Ct}{R^2}\varepsilon_1^2\sup_{0\leq s\leq t,y\in R^3}\int_{B_{\frac{R}{2}}(y)}(|\nabla u|^2+|v|^2)dx\nonumber
\end{align}
and
\begin{align}\label{prec2-3}
&\int_{\tau}^{t}\int_{\R^3}|f_{2}(x,t)|^3dxds\leq C\int_{\tau}^{t}\int_{\R^3}\left(R^{-3}\phi(x)\sup_{y\in\mathbb R^3}\int_{B_R(y)}(|\nabla u|^2+|v|^2)\right)^3dxds\\
&\leq C\int_{0}^{t}\int_{\R^3}\left(R^{-2}\phi(x)\sup_{y\in\mathbb R^3}(\int_{B_R(y)}(|\nabla u|^3+|v|^3))^\frac{2}{3}\right)^3dxds\leq \frac{C\varepsilon_1^6(t-\tau)}{R^3}.\nonumber
\end{align}
Collecting \eqref{3.21}, \eqref{pre-1-1}, \eqref{com}, \eqref{prec1-1} and \eqref{prec2-1} yields \eqref{pres1}. Next, combining \eqref{3.21}, \eqref{com}, \eqref{pre-1-2}, \eqref{prec1-2} with  \eqref{prec2-2} gives \eqref{pres2}. Finally, \eqref{pres3} follows from \eqref{3.21}, \eqref{pre-1-3}, \eqref{com}, \eqref{prec1-3} and \eqref{prec2-3}. Therefore, the desired results are obtained.
\end{proof}

Based on the above local estimates lemmas, we have the following propositions.
\begin{prop}\label{prop1}
	Let $(u, v)$ be a smooth solution to \eqref{O-F-1}-\eqref{O-F-3} in $\mathbb R^3\times(0, T)$. Then, there are constants $\varepsilon_1>0$ and $\sigma>0$ such that, if
	\begin{equation}\label{2.39}
	\esssup_{0\leq s\leq \sigma R^2,y\in\mathbb R^3}\int_{B_{R_0}(y)}|\nabla u(x,s)|^3+|v(x,s)|^3dx< \varepsilon_1^3
	\end{equation}
	for some $R_0,R>0$, then for any $x_0\in\R^3$, $r\leq R_0$ and $t\leq \min\{\sigma r^2,\sigma R^2\}$, we have
		\begin{align}\label{p-1}
		&\frac{1}{r}\int_{\R^3}(|v(t)|^2+|\D u(t)|^2)\phi^2 dx+\frac{1}{r}\int_{0}^{t}\int_{\R^3}(|\D v|^2+|\D^2u|^2)\phi^2dxds\nonumber\\
		&\leq \frac{1}{r}\int_{\R^3}(|v_0|^2+|\D u_0|^2)\phi^2dx+\frac{C}{r}\int_{0}^{t}\int_{\R^3}(|v|^2+|\D u|^2)|\D\phi|^2dxds\\
		&\qquad+\frac{C}{r}\int_{0}^{t}\int_{\R^3}(p-c(s))v\cdot\D\phi\phi dxds,\nonumber
		\end{align}
		where $\phi$ is the cut-off function compactly supported in $B_r(x_0)$ with $\phi\equiv 1$ on $B_\frac{r}{2}(x_0)$ and $|\D\phi|\leq Cr^{-1}$. Moreover, it holds for any $x_0\in\R^3$, $r\leq R_0$ and $t\leq \min\{\sigma r^2,\sigma R^2\}$ that
		\begin{align}\label{p-1-1}
		&\frac{1}{r}\int_{B_{\frac{r}{2}}(x_0)}|v(t)|^2+|\D u(t)|^2dx+\frac{1}{r}\int_{0}^{t}\int_{B_{\frac{r}{2}}(x_0)}|\D v|^2+|\D^2u|^2dxds\\
		&\leq\sup_{y\in\R^3}\frac{C}{r}\int_{B_r(y)}|v_0|^2+|\D u_0|^2dx.\nonumber
		\end{align}
\end{prop}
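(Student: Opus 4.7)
The plan is to split the proof into two parts: \eqref{p-1} will follow from Lemma \ref{key-lem1} by absorbing the quartic terms using a Sobolev–Hölder interpolation powered by the $L^3$ smallness assumption \eqref{2.39}, while \eqref{p-1-1} will follow from \eqref{p-1} by combining the pressure estimate \eqref{pres2} of Lemma \ref{key-lemma3} with a covering argument and a bootstrap on suitable running‐supremum quantities.

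For \eqref{p-1}, I take $\phi\in C_0^\infty(B_r(x_0))$ with $\phi\equiv1$ on $B_{r/2}(x_0)$ and $|\nabla\phi|\leq C/r$, and apply Lemma \ref{key-lem1}. The only term appearing in Lemma \ref{key-lem1} that is not already on the right-hand side of \eqref{p-1} is $C\int_0^t\int(|v|^4+|\nabla u|^4)\phi^2\,dxds$. For $f\in\{v,\nabla u\}$, Hölder's inequality with exponents $3/2$ and $3$ together with the Sobolev embedding $W_0^{1,2}(\R^3)\hookrightarrow L^6(\R^3)$ applied to $f\phi$ give
\begin{equation*}
\int |f|^4\phi^2\,dx \leq \|f\|_{L^3(B_r(x_0))}^2\,\|f\phi\|_{L^6(\R^3)}^2 \leq C\varepsilon_1^2\int\bigl(|\nabla f|^2\phi^2+|f|^2|\nabla\phi|^2\bigr)\,dx,
\end{equation*}
where $r\leq R_0$ yields $\|f\|_{L^3(B_r(x_0))}\leq\|f\|_{L^3(B_{R_0}(x_0))}\leq\varepsilon_1$ via \eqref{2.39}. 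Choosing $\varepsilon_1$ small enough that $C\varepsilon_1^2\leq a/4$ and summing over $f\in\{v,\nabla u\}$ lets me absorb the bad contribution into the dissipation on the LHS of Lemma \ref{key-lem1}. Dividing by $r$ produces \eqref{p-1}.

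For \eqref{p-1-1}, I introduce the running‐supremum quantities
\begin{equation*}
A(t):=\sup_{y\in\R^3,\,s\in[0,t]}\frac{1}{r}\int_{B_{r/2}(y)}\bigl(|v(s)|^2+|\nabla u(s)|^2\bigr)dx,\quad B(t):=\sup_{y\in\R^3}\frac{1}{r}\int_0^t\!\!\int_{B_{r/2}(y)}\bigl(|\nabla v|^2+|\nabla^2 u|^2\bigr)dxds,
\end{equation*}
and start from \eqref{p-1} with the same $\phi$. Covering $B_r(x_0)$ by an absolute number $N$ of balls of radius $r/2$, together with $|\nabla\phi|\leq C/r$ and $t\leq\sigma r^2$, shows that $\frac{C}{r}\int_0^t\int(|v|^2+|\nabla u|^2)|\nabla\phi|^2\,dxds\leq CN\sigma\,A(t)$. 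For the pressure term, Cauchy–Schwarz in space and time followed by \eqref{pres2} and the same covering give
\begin{equation*}
\int_0^t\|(p-c)\phi\|_{L^2}^2\,ds\leq C\varepsilon_1^2 r\bigl(B(t)+\sigma A(t)\bigr),\qquad \int_0^t\|v\nabla\phi\|_{L^2}^2\,ds\leq CN\sigma r\,A(t),
\end{equation*}
and then Young's inequality produces $\frac{C}{r}\int_0^t\int(p-c)v\cdot\nabla\phi\,\phi\,dxds\leq C\varepsilon_1\sqrt{\sigma}\bigl(A(t)+B(t)\bigr)$. Taking $\sup_{x_0\in\R^3}$ and $\sup_{s\in[0,t]}$ on the left side of \eqref{p-1} yields the bootstrap inequality
\begin{equation*}
A(t)+B(t)\leq \sup_{y\in\R^3}\frac{C}{r}\int_{B_r(y)}\bigl(|v_0|^2+|\nabla u_0|^2\bigr)dx+\bigl(CN\sigma+C\varepsilon_1\sqrt{\sigma}\bigr)\bigl(A(t)+B(t)\bigr),
\end{equation*}
and a final choice of $\sigma$ and $\varepsilon_1$ small enough that $CN\sigma+C\varepsilon_1\sqrt{\sigma}<\frac{1}{2}$ permits absorption and delivers \eqref{p-1-1}.

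The main obstacle is the pressure term: one must use the $L^2$ bound \eqref{pres2} rather than the cheaper $L^{3/2}$ bound \eqref{pres1}, since only \eqref{pres2} delivers the $A+B$ structure needed for absorption, and the constants must be tracked carefully through the covering and Young steps to ensure the final smallness condition can be met simultaneously with the one imposed for \eqref{p-1}. The bootstrap also requires a continuity‐in‐$t$ argument to justify that $A(t)$ and $B(t)$ are a priori finite before any absorption is performed, which follows from the standing smoothness of $(v,u)$ on $\R^3\times(0,T)$.
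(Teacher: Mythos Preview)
Your proposal is correct and follows essentially the same route as the paper: for \eqref{p-1} you use H\"older plus Sobolev embedding on $f\phi$ to absorb the quartic term, and for \eqref{p-1-1} you combine the $L^2$ pressure estimate \eqref{pres2}, a covering by $r/2$-balls, and smallness of $\sigma,\varepsilon_1$. The only minor stylistic difference is that the paper absorbs the dissipation contribution $C\varepsilon_1^2\int_0^t\!\int(|\nabla^2 u|^2+|\nabla v|^2)\phi^2$ coming from \eqref{pres2} directly into the left-hand side at fixed $x_0$ (since the same $\phi$ appears), whereas you first bound it by $B(t)$ via covering and absorb after taking suprema; either works.
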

\begin{proof}
For any $x_0\in \R^3$ and $r\leq R_0$, let $\phi$  in \eqref{loc-est1} be a cut-off function compactly supported in $B_r(x_0)$ with $\phi\equiv 1$ on $B_\frac{r}{2}(x_0)$ and $|\D\phi|\leq Cr^{-1}$. It follows from H\"{o}lder's inequality, the Sobolev embedding theorem and \eqref{2.39} that
\begin{align}\label{2.40}
&C\int_{0}^{t}\int_{\mathbb R^3}(|\nabla u|^4+|v|^4)\phi^2dx\nonumber\\
&\leq C\int_{0}^{t}\left(\int_{B_r(x_0)}|\nabla u|^3+|v|^3\right)^\frac{2}{3}\left(\int_{\mathbb R^3}(|\phi\nabla u|^6+|\phi v|^6)dx\right)^\frac{1}{3}ds\\
&\leq C\varepsilon_1^2\left(\int_{0}^{t}\int_{\mathbb{R}^3}(|\nabla^2 u|^2+|\nabla v|^2)\phi^2dx+\int_{\R^3}(|\nabla u|^2+|v|^2)|\D\phi|^2dx\right).\nonumber
\end{align}
Choosing $\varepsilon_1$ small enough such that $C\varepsilon_1<\min\{\frac{1}{4},\frac{a}{4}\}$ and substituting \eqref{2.40} into \eqref{loc-est1}, we complete the proof of \eqref{p-1}. To prove \eqref{p-1-1}, we use covering arguments to estimate terms on the right hand side of \eqref{p-1}. It follows from Young's equality, \eqref{pres2} and covering argument that
\begin{align}\label{2.41}
&C\int_{0}^{t}\int_{\R^3}(p-c(s))v\cdot\D\phi\phi dxds\\
&\leq\int_{0}^{t}\int_{\R^3}(p-c(s))^2\phi^2dxds+C\int_{0}^{t}\int_{\R^3}|v|^2|\D\phi|^2dxds\nonumber\\
&\leq C\varepsilon_1^2\int_{0}^{t}\int_{\R^3}(|\D^2u|^2+|\D v|^2)\phi^2dxds+\frac{C\varepsilon_1^2t}{r^2}\left(\sup_{0\leq s\leq t,y\in\R^3}\int_{B_{\frac{r}{2}}(y)}|\D u|^2+|v|^2dx\right)\nonumber
\end{align}
and
\begin{align}\label{2.42}
C\int_{0}^{t}\int_{\R^3}(|\nabla u|^2+|v|^2)|\D\phi|^2dxds\leq \frac{Ct}{r^2}\sup_{0\leq s\leq t,y\in\R^3}\int_{B_{\frac{r}{2}}(y)}|\D u|^2+|v|^2dx.
\end{align}
Substituting \eqref{2.40}-\eqref{2.42} into \eqref{loc-est1}, dividing the resulting equation by $r$, choosing $\sigma$ sufficiently small such that $C\sigma<\frac{1}{2}$ and taking super-mum with respect to $x_0$ in $\R^3$ and $0\leq t\leq\min\{\sigma R^2,\sigma r^2\}$, we have  \eqref{p-1-1}.
		\end{proof}
\begin{prop}\label{prop2}
	Let $(u, v)$ be a smooth solution to \eqref{O-F-1}-\eqref{O-F-3} in $\mathbb R^3\times(0, T)$. Then, there are constants $\varepsilon_1>0$ and $\sigma>0$ such that, if
	\begin{equation}\label{2.39}
	\esssup_{0\leq s\leq \sigma R^2,y\in\mathbb R^3}\int_{B_{R_0}(y)}|\nabla u(x,s)|^3+|v(x,s)|^3dx< \varepsilon_1^3
	\end{equation}
	for some $R_0,R>0$, then for any $x_0\in\R^3$, $R\leq R_0$ and $0<\delta t<\tau<t=\sigma R^2$, we have
		\begin{align}\label{p-2}
		&R\int_{\R^3}(|\D v(\tau)|^2+|\D^2 u(\tau)|^2)\phi^6dx\nonumber\\
		&\leq \frac{C}{R}\int_{B_R(x_0)}|\D u_0|^2+|v_0|^2dx+\frac{C}{R}\int_{0}^{t}\int_{\R^3}(|v|^2+|\D u|^2)|\D\phi|^2\phi^4dxds\\
		&\quad+\frac{C}{R}\int_{0}^{t}\int_{\R^3}(p-c(s))v\cdot\D\phi\phi^5 dxds+\frac{C}{R}\int_{\delta t}^{\tau}\int_{\R^3}(p-c(s))^2\phi^4dxds,\nonumber
		\end{align}
		where $\delta\in(\frac{1}{2},1)$ is a constant and $\phi$ is the cut-off function compactly supported in $B_\frac{R}{2}(x_0)$ with $\phi\equiv 1$ on $B_\frac{R}{4}(x_0)$, $|\D\phi|\leq CR^{-1}$ and $|\D^2\phi|\leq CR^{-2}$. Moreover, it holds for any $x_0\in\R^3$, $R\leq R_0$ and $0<\delta t<\tau<t=\sigma R^2$ that
		\begin{equation}\label{p-2-2}
		R\int_{B_{\frac{R}{4}}(x_0)}(|\D v(\tau)|^2+|\D^2 u(\tau)|^2)dx\leq \sup_{y\in\R^3}\frac{C}{R}\int_{B_R(y)}|v_0|^2+|\D u_0|^2dx.
		\end{equation}
\end{prop}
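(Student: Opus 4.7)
The argument follows the blueprint of Proposition \ref{prop1} but uses the second-order local energy identity of Lemma \ref{key-lem2}, and employs an intermediate time $s_0$ to circumvent the absence of $H^1$-regularity at $t=0$. I would first apply Proposition \ref{prop1} with cutoff $\psi=\phi^3$ (so $\psi^2=\phi^6$ and $|\nabla\psi|^2\leq C\phi^4|\nabla\phi|^2$) to get control of $\frac{1}{R}\int_0^{\delta t}\int(|\nabla v|^2+|\nabla^2 u|^2)\phi^6\,dxds$, and then extract via a pigeonhole argument a time $s_0\in(\delta t/2,\delta t)$ at which $\int_{\R^3}(|\nabla v(s_0)|^2+|\nabla^2 u(s_0)|^2)\phi^6\,dx\leq \frac{C}{\delta t}\int_0^{\delta t}\int(\cdot)\phi^6\,dxds$. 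Since $\delta t\simeq\sigma R^2$, multiplication by $R$ converts this bound into precisely the initial-energy, derivative-of-cutoff, and $(p-c)v\cdot\nabla\phi\,\phi^5$ terms on the right of \eqref{p-2}.

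I would then apply Lemma \ref{key-lem2} on $(s_0,\tau)$ with the prescribed spatial cutoff $\phi$. The nonlinear term $\int(|v|^2+|\nabla u|^2)(|\nabla v|^2+|\nabla^2 u|^2)\phi^6$ is handled by H\"older's inequality and the Sobolev embedding $H^1\hookrightarrow L^6$ applied to $\nabla v\,\phi^3$ and $\nabla^2 u\,\phi^3$, producing $C\varepsilon_1^2\int(|\nabla^2 v|^2+|\nabla^3 u|^2)\phi^6+C\varepsilon_1^2\int(|\nabla v|^2+|\nabla^2 u|^2)|\nabla\phi|^2\phi^4$; the first piece is absorbed into the dissipation on the LHS provided $\varepsilon_1$ is small. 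Young's inequality on the pressure contribution in Lemma \ref{key-lem2} yields $\tfrac12\int|\nabla^2 v|^2\phi^6$ (absorbed) and $C\int(p-c)^2|\nabla\phi|^2\phi^4$; with $|\nabla\phi|\leq C/R$ and the factor $R$ outside the LHS of \eqref{p-2}, this gives $\frac{C}{R}\int_{s_0}^{\tau}\int(p-c)^2\phi^4$. The $(s_0,\delta t)$ portion of this integral is absorbed back via the $L^2$-pressure bound \eqref{pres2} combined with \eqref{p-1-1}, leaving the stated $\frac{C}{R}\int_{\delta t}^{\tau}\int(p-c)^2\phi^4$ term of \eqref{p-2}.

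For \eqref{p-2-2}, I would estimate the two remaining pressure-related integrals on the right of \eqref{p-2} by Lemma \ref{key-lemma3}: the $(p-c)v\cdot\nabla\phi\,\phi^5$ term is split by Young's inequality into $\int(p-c)^2\phi^4+\int|v|^2|\nabla\phi|^2\phi^6$, while the $(p-c)^2\phi^4$ term is controlled directly by \eqref{pres2}. In both cases the dissipative residues are absorbed by the smallness $\varepsilon_1^2$, and the non-dissipative residues of the form $\frac{C\varepsilon_1^2 t}{R^2}\sup_y\int_{B_{R/2}(y)}(|v|^2+|\nabla u|^2)$ are controlled via \eqref{p-1-1}. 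Taking the supremum over $y\in\R^3$ in all remaining initial-data terms and using $t=\sigma R^2$ produces \eqref{p-2-2}.

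The main obstacle will be the careful bookkeeping of the pressure cascade: each invocation of Lemma \ref{key-lemma3} injects small-$\varepsilon_1^2$ multiples of both dissipative quantities and $\sup_y$-type quantities, so I must verify that the absorptions close and that the $\varepsilon_1^2$ factors only hit terms that can be retained on the LHS or controlled by \eqref{p-1-1}. The pigeonhole extraction at $s_0$ and the trading of the $(s_0,\delta t)$-pressure integral against initial energy must also be tracked with the correct powers of $R$ and $\sigma$ to match the scaling of \eqref{p-2}.
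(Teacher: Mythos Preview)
Your approach is essentially the same as the paper's: mean-value/pigeonhole extraction of a good intermediate time from the dissipation in \eqref{p-1} (applied with $\phi^3$ so that $\psi^2=\phi^6$), then Lemma~\ref{key-lem2} on the remaining time interval, with the nonlinear term absorbed via H\"older--Sobolev and smallness of $\varepsilon_1$, and Young's inequality on the pressure term; finally \eqref{p-2-2} follows by feeding \eqref{pres2}, \eqref{2.41}--\eqref{2.42} and \eqref{p-1-1} into \eqref{p-2}.

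One simplification you are missing: in the statement, $\delta\in(\tfrac12,1)$ is an \emph{output} of the proposition, not a prescribed constant. The paper simply lets the mean value theorem determine $\delta$ itself (so that $\frac{t}{2R}\int(|\nabla v(\delta t)|^2+|\nabla^2u(\delta t)|^2)\phi^6\,dx$ is bounded by the dissipation integral over $(0,t)$), and then applies Lemma~\ref{key-lem2} directly on $(\delta t,\tau)$. This eliminates your auxiliary time $s_0\in(\delta t/2,\delta t)$ and the separate treatment of the $(s_0,\delta t)$ pressure integral; the $(p-c)^2\phi^4$ term in \eqref{p-2} then arises on exactly the interval $(\delta t,\tau)$ without any splicing. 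Your route would still close, but the extra bookkeeping is unnecessary once you read $\delta$ as existentially quantified.
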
	
\begin{proof}
	For any $x_0\in\R^3$, let $\phi$ in \eqref{loc-est2} be a cut-off function  compactly supported in $B_{\frac{R}{2}}(x_0)$ with $\phi\equiv 1$ on $B_\frac{R}{4}(x_0)$, $|\D\phi|\leq CR^{-1}$ and $|\D^2\phi|\leq CR^{-2}$. By the mean value theorem, \eqref{p-1} and \eqref{p-1-1}, for $t=\sigma R^2$, $r=R$, there exists a $\delta\in(\frac{1}{2},1)$ such that
	\begin{align}\label{2.44}
	&\frac{t}{2R}\int_{\R^3}(|\D v(\delta t)|^2+|\D^2 u(\delta t)|^2)\phi^6dx\nonumber\\
	&\leq \frac{1}{R}\int_{\R^3}(|v_0|^2+|\D u_0|^2)\phi^6dx+\frac{C}{R}\int_{0}^{t}\int_{\R^3}(|v|^2+|\D u|^2)|\D\phi|^2\phi^4dxds\\
	&\qquad+\frac{C}{R}\int_{0}^{t}\int_{\R^3}(p-c(s))v\cdot\D\phi\phi^5 dxds.\nonumber
	\end{align}
	By using Young's inequality, one has
	\begin{align}\label{2.44-1}
	&12\int_{\delta t}^{\tau}\int_{\R^3}(p-c(t))(\D v^i\D_i(\phi^5\D\phi)-\Delta v^i\phi^5\D_i\phi)dxds\nonumber\\
	&\leq\frac{1}{2}\int_{\delta t}^{\tau}\int_{\R^3}|\D^2 v|^2\phi^6dxds+\frac {C}{R^2}\int_{\delta t}^{\tau}\int_{\R^3} |\D v|^2\phi^4 dxds\\
	&\quad+\frac{C}{R^2}\int_{\delta t}^{\tau}\int_{\R^3}(p-c(t))^2\phi^4dxds.\nonumber
	\end{align}
	It follows from H\"{o}lder's inequality and the Sobolev embedding theorem that
	\begin{align}\label{2.45}
	&\int_{\delta t}^{\tau}\int_{\mathbb R^3}(|v|^2+|\nabla u|^2)(|\nabla v|^2+|\nabla^2u|^2)\phi^6dxds\nonumber\\
	&\leq\int_{\delta t}^{\tau}\Big(\int_{B_R(x_0)}|\nabla u|^3+|v|^3dx\Big)^\frac{2}{3}\Big(\int_{\mathbb R^3}(|\nabla^2u|^6+|\nabla v|^6)\phi^{18}\Big)^\frac{1}{3}ds\nonumber\\
	&\leq C\varepsilon_1^2\int_{\delta t}^{\tau}\int_{\mathbb R^3}(|\nabla^3u|^2+|\nabla^2v|^2)\phi^6dxds \\ &\quad+C\varepsilon_1^2\int_{\delta t}^{\tau}\int_{\R^3}(|\nabla^2u|+|\nabla v|^2)|\D\phi|^2\phi^4dxds.\nonumber
	\end{align}
	Substituting \eqref{2.44}-\eqref{2.45} into \eqref{loc-est2} and multiplying the result inequality by $R$, we obtain
	\begin{align}
	&R\int_{\R^3}(|\D v(\tau)|^2+|\D^2 u(\tau)|^2)\phi^6dx+R\int_{\delta t}^{\tau}\int_{\R^3}(|\D^2v|^2+a|\D^3u|^2)\phi^6dx\\
	&\leq C\varepsilon_1^2\int_{\delta t}^{\tau}\int_{\mathbb R^3}(|\nabla^3u|^2+|\nabla^2v|^2)\phi^6dxds+\frac{C}{R}\int_{\delta t}^{\tau}\int_{\R^3}(p-c(s))^2\phi^4dxds\nonumber\\
	&\quad+R\int_{\R^3}(|\D v(\delta t)|^2+|\D^2 u(\delta)|^2)\phi^6dx+\frac{C\varepsilon_1^2}{R}\int_{\delta t}^{\tau}\int_{\R^3}(|\nabla^2u|+|\nabla v|^2)\phi^4dxds.\nonumber
	\end{align}
	Choosing $\varepsilon_1$ small enough and using \eqref{2.44} lead to \eqref{p-2}. On the other hand, by using \eqref{2.41}, \eqref{2.42} for $r=R$, \eqref{pres2} and \eqref{p-2}, we conclude that for any $x_0\in\R^3$ and $0<\delta t<\tau<t=\sigma R^2$,
	\begin{align*}
	R\int_{B_{\frac{R}{4}}(x_0)}(|\D v(\tau)|^2+|\D^2 u(\tau)|^2)dx\leq\sup_{y\in\R^3}\frac{C}{R}\int_{B_R(y)}|v_0|^2+|\D u_0|^2dx.\nonumber
	\end{align*}
	\end{proof}	

\section{\bf Higher regularity estimates}
In this section, we derive higher regularity estimates as follows.
\begin{lemma}
	Let $(u, v)$ be a solution to \eqref{O-F-1}-\eqref{O-F-3} on $\mathbb R^3\times(0, T)$. There are constants $\varepsilon_1>0$ and $R_0>0$ such that
	\begin{equation*}
	\esssup_{0\leq s\leq T,x\in\mathbb R^3}\int_{B_{R}(x)}|\nabla u(x,s)|^3+|v(x,s)|^3dx< \varepsilon_1^3,\quad\forall R\in[0,R_0].
	\end{equation*}
	Then, for all $x_0\in \mathbb R^3$ and $t\in[\tau,T]$ with $\tau\in (0,T)$, for any $k\geq 0$, it holds that
	\begin{align}\label{3.18}
	&\int_{B_{R}(x_0)}(|\D^{k+1} u(t)|^2+|\D^k v(t)|^2)dx+\int_{\tau}^{t}\int_{B_{R}(x_0)}|\D^{k+2}u|^2+|\D^{k+1}v|^2dxds\\
	&\leq C(\varepsilon_1,\tau,k,T,R).\nonumber
	\end{align}
\end{lemma}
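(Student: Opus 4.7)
The plan is induction on $k$. The base cases $k=0$ and $k=1$ are already contained in Propositions \ref{prop1} and \ref{prop2} after a covering argument: Proposition \ref{prop1} gives the $k=0$ estimate directly, and Proposition \ref{prop2} yields a bound on $R\int_{B_{R/4}(x_0)}(|\nabla v(\tau_0)|^2+|\nabla^2 u(\tau_0)|^2)\,dx$ at some intermediate time $\tau_0\in(\delta t,t)$ supplied by the mean value theorem; iterating this on successive time slabs $[\tau_0, t]$, together with the $L^2_tL^2_x$ dissipation bound from \eqref{p-2}, produces \eqref{3.18} for $k=1$ on all of $[\tau,T]$.

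For the inductive step, suppose \eqref{3.18} holds up to order $k$. Fix $\phi\in C_0^\infty(B_R(x_0))$ with $\phi\equiv 1$ on $B_{R/2}(x_0)$ and $|\nabla^j\phi|\leq CR^{-j}$, and a large exponent $N=N(k)$. Apply $\nabla^k$ to \eqref{O-F-1} and test against $\nabla^k v\,\phi^{2N}$; apply $\nabla^{k+1}$ to \eqref{O-F-3} and test against $-\nabla^{k+1}\Delta u\,\phi^{2N}$. Integration by parts together with the ellipticity \eqref{uni-ell} applied to the principal quadratic form $W_{p^i_\alpha p^j_\beta}\nabla^{k+2}u\cdot\nabla^{k+2}u$ produces, in parallel with Lemmas \ref{key-lem1} and \ref{key-lem2}, the differential inequality
\[
\frac{d}{dt}\!\int\!\bigl(|\nabla^k v|^2+|\nabla^{k+1}u|^2\bigr)\phi^{2N}dx + a\!\int\!\bigl(|\nabla^{k+1}v|^2+|\nabla^{k+2}u|^2\bigr)\phi^{2N}dx \leq \mathrm{RHS}.
\]
Expanding the RHS by Leibniz splits it into four kinds of terms. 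First, top-order absorbing terms of schematic form $v\cdot\nabla^{k+1}v\cdot\nabla^k v\,\phi^{2N}$, and the analogue with $\nabla u$ and $\nabla^{k+2}u$, are handled by the Hölder-Sobolev estimate
\[
\int|v|^2|\nabla^{k+1}v|^2\phi^{2N}dx \leq \|v\|_{L^3(B_R)}^2\,\bigl\|\nabla^{k+1}v\,\phi^N\bigr\|_{L^6}^2 \leq C\varepsilon_1^2\!\int\!(|\nabla^{k+2}v|^2\phi^{2N}+R^{-2}|\nabla^{k+1}v|^2\phi^{2N-2})\,dx
\]
and then absorbed into the dissipation by the smallness assumption. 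Second, genuinely lower-order terms are bounded by the induction hypothesis on a slightly enlarged ball via a standard covering argument. Third, cutoff-boundary contributions are controlled by $CR^{-2}$ times order-$k$ quantities, hence by induction. Fourth, the pressure contribution is treated by differentiating the Poisson equation $-\Delta p=\partial_i\partial_j[v^iv^j+\nabla_i u^\ell W_{p^\ell_j}]$ $k$ times and rerunning the Calderón-Zygmund and commutator analysis of Lemma \ref{key-lemma3} to obtain local $L^2$ estimates on $\nabla^k p$ in terms of induction-order quantities. A Gronwall argument on $[\tau_0,t]$ for a suitable $\tau_0\in(0,\tau)$ (chosen by the mean value theorem so that the higher-order norm at time $\tau_0$ is finite in terms of the order-$k$ time integral) then closes the induction.

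The main obstacle is the combinatorial accounting when $\nabla^{k+1}$ is distributed over the quasilinear operator $\nabla_\alpha W_{p^i_\alpha}(u,\nabla u)$ in \eqref{O-F-3}: Leibniz produces sums of the schematic form $W_{u^{(a)}p^{(b)}}(u,\nabla u)\cdot\prod_\ell\nabla^{j_\ell}u$ with $a+b\leq 2$ and $\sum_\ell j_\ell=k+2$, and each term must be classified as either absorbable, boundary, or controlled by induction. The quadratic structure \eqref{qua-est} of $W$ in $(u,\nabla u)$ caps the multilinearity; the identity $-u^i\Delta u^i=|\nabla u|^2$ and its higher-derivative analogues obtained by differentiating the constraint $|u|^2\equiv 1$ convert the a priori most singular terms of the form $u\cdot\nabla^{k+2}u$ into lower-order ones, exactly as in \eqref{3.11}. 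This bookkeeping, combined with the interpolation scheme above, is the technical heart of the argument.
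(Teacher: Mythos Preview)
Your inductive strategy matches the paper's, but the step ``genuinely lower-order terms are bounded by the induction hypothesis'' hides a real gap. The Leibniz expansion of $\nabla^k(v\cdot\nabla v)$ and of $\nabla^{k+1}\nabla_\alpha W_{p^i_\alpha}(u,\nabla u)$ produces, besides the top-order factor you correctly absorb via $L^3$ smallness, intermediate products of the schematic form $|\nabla^j v|\,|\nabla^{k-j}v|$ and $|\nabla^{j}u|\,|\nabla^{k+2-j}u|$ with $1\le j\le k-1$. These are \emph{not} controlled by the induction hypothesis alone: that hypothesis gives $\nabla^l v,\nabla^{l+1}u\in L^\infty_tL^2_x$ for $l\le k-1$ and one more derivative in $L^2_tL^2_x$, which does not bound, say, $\int_\tau^t\int|\nabla^{[k/2]}v|^2|\nabla^{\lceil k/2\rceil}v|^2\phi^2\,dx\,ds$. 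The paper closes this in two moves you omit. First, Sobolev embedding $H^2(B_R)\hookrightarrow L^\infty(B_R)$ applied to the induction data yields the pointwise bound $|\nabla^{l-1}u|+|\nabla^{l-2}v|\le C$ on $B_R\times(\tau,T)$ for $l\le k-1$; this reduces every Leibniz product to at most two ``large'' factors, of orders $[\tfrac{k}{2}]$ and $\lceil\tfrac{k}{2}\rceil$. Second, for $k\ge5$ even those middle factors are $L^\infty$-bounded and the term drops out; for $k=2,3,4$ the paper treats them case by case via an integration-by-parts that converts, e.g., $\int|\nabla v|^4\phi^2$ back into a top-order term $C\varepsilon_1^2\int|\nabla^3 v|^2\phi^2$ plus harmless remainders, which is then absorbed. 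Without these two ingredients the induction does not close.

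Two smaller points. Your test function for the director equation is off by one: applying $\nabla^{k+1}$ to \eqref{O-F-3} and pairing with $-\nabla^{k+1}\Delta u\,\phi^{2N}$ produces $\tfrac{d}{dt}\int|\nabla^{k+2}u|^2\phi^{2N}$, not $|\nabla^{k+1}u|^2$; the paper applies $\nabla^{k+1}$ and tests against $\nabla^{k+1}u\,\phi^2$ (equivalently, apply $\nabla^k$ and test against $-\nabla^k\Delta u$). And the pressure can be handled more cheaply than you suggest: one integration by parts trades $\nabla^k p$ for $\nabla^{k-1}p$, and then a standard local elliptic estimate for $-\Delta p=\partial_i\partial_j F^{ij}$ bounds $\int_\tau^T\int_{\mathrm{spt}\,\phi}|\nabla^{k-1}p|^2$ by induction-order quantities; there is no need to rerun the Calder\'on--Zygmund commutator analysis of Lemma~\ref{key-lemma3} at each order.
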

\begin{proof}
	We prove this lemma by induction. It follows from Lemmas 2.1,-2.2  that \eqref{3.18} holds for $k=0,1$. Assume that \eqref{3.18} holds for $l\leq k-1$ with $k\geq 2$;  i.e. for any $t\in(\tau,T)$ and any $l\in[0,k-1]$
	\begin{align}\label{3.19}
	\int_{B_{R}(x_0)}(|\D^{l+1} u|^2+|\D^{l} v|^2)dx+\int_{\tau}^{T}\int_{B_{R}(x_0)}|\D^{l+2}u|^2+|\D^{l+1}v|^2dxds\leq C.
	\end{align}
	By using the Sobolev embedding theorem in  (\ref{3.19}),  we obtain that  for $l\geq 2$,
		\begin{equation}\label{3.22}
		|\D^{l-2}\D u|+|\D^{l-2}v|\leq C, \quad\text{in}\quad B_R(x_0)\times(\tau,T).
		\end{equation}
	
	Now, we prove \eqref{3.19} also holds for $l=k$. Let $\phi\in C_{0}^\infty(B_{2R}(x_0))$ be a cut-off function with $\phi\equiv 1$ on $B_R(x_0)$ and $|\nabla \phi|\leq \frac{C}{R}$ and $|\nabla^2\phi|\leq \frac{C}{R^2}$ for all $R\leq R_0$.
	
	Applying $\D^k$ to \eqref{O-F-1}, multiplying the resulting equation by $\D^kv\phi^2$ and integrating over $\R^3$ give
	\begin{align}\label{3.20}
	&\frac{1}{2}\frac{d}{dt}\int_{\R^3}|\D^k v|^2\phi^2dx+\int_{\R^3}|\D^{k+1}v|^2\phi^2dx\nonumber\\
	&=-\int_{\R^3}\D^k\D_jv^i\D^kv^i\D_j(\phi^2)dx+\int_{\R^3}\D^kp\D^k v^i\D_i(\phi^2)dx\nonumber\\
	&\quad+\int_{\R^3}\D^k(v^j v^i)\D_j(\D^k v\phi^2)dx+\int_{\R^3}\D^k(\D_iu^kW_{p_j^k})\D_j(\D^k v^i\phi^2)dx\\
	&=:I_1+I_2+I_3+I_4.\nonumber
	\end{align}
    It follows from Young's inequality that
    \begin{align}\label{3.23}
    I_1\leq\frac{1}{8}\int_{\R^3}|\D^{k+1}v|^2\phi^2dx+C\int_{\text{spt}\phi}|\D^kv|^2dx.
    \end{align}
	Integration by parts yields
	\begin{align}\label{3.24}
	I_2&=-\int_{\R^3}\D^{k-1}p\D(\D^{k}v^i\D_i(\phi^2))dx\\
	&\leq\frac{1}{8}\int_{\R^3}|\D^{k+1}v|^2\phi^2dx+C\int_{\text{spt}\phi}|\D^{k-1}p|^2+|\D^{k}v|^2dx.\nonumber
	\end{align}
	For $I_3$,  it follows from \eqref{3.19} and \eqref{3.22} that
	\begin{align}\label{3.25}
	I_3&\leq\int_{\R^3}(|v||\D^{k}v|+\sum_{j=1}^{k-1}|\D^jv||\D^{k-j}v|)(|\D^{k+1}v|\phi^2+|\D^{k}v||\D\phi|\phi)\nonumber\\
	&\leq\frac{1}{8}\int_{\R^3}|\D^{k+1}v|^2\phi^2dx+C\int_{\R^3}\left(|v|^2|\D^{k}v|^2+|\D^{[\frac{k}{2}]}v|^2|\D^{\lceil\frac{k}{2}\rceil}v|^2\right)\phi^2dx\\
	&\quad+C\int_{\text{spt}\phi}|\D^{k}v|^2dx+C,\nonumber
	\end{align}
	which is obvious for $k=2,3$ and also holds for $k\geq 4$ due to the fact that  $|\D^{[\frac{k}{2}-1]} v|\leq C$  when $[\frac{k}{2}]-1\leq k-3$.
 Here and in the sequel, $[c]$ denotes the integer part of $c$ and $\lceil c \rceil$ means the smallest integer greater or equal to $c$.
	Similarly,
	\begin{align}\label{3.26}
	I_4&\leq\frac{1}{8}\int_{\R^3}|\D^{k+1}v|^2\phi^2dx+C\int_{\R^3}|\D u|^2|\D^{k+1}u|^2\phi^2dx\\
	&\quad+C\int_{\R^3}|\D^{[\frac{k+2}{2}]}u|^2|\D^{\lceil\frac{k+2}{2}\rceil}u|^2\phi^2dx+C\int_{\text{spt}\phi}|\D^{k}v|^2dx+C.\nonumber
	\end{align}
	Substituting \eqref{3.23}-\eqref{3.26} into \eqref{3.20} yields
	\begin{align}\label{3.27}
	&\frac{1}{2}\frac{d}{dt}\int_{\R^3}|\D^k v|^2\phi^2dx+\frac{1}{2}\int_{\R^3}|\D^{k+1}v|^2\phi^2dx\nonumber\\
	&\leq C\int_{\R^3}(|v|^2|\D^kv|^2+|\D u|^2|\D^{k+1}u|^2)\phi^2dx\nonumber\\
	&\quad+C\int_{\R^3}\left(|\D^{[\frac{k}{2}]}v|^2|\D^{\lceil\frac{k}{2}\rceil}v|^2+|\D^{[\frac{k+2}{2}]}u|^2|\D^{\lceil\frac{k+2}{2}\rceil}u|^2\right)\phi^2dx\\
	&\quad+C\int_{\text{spt}\phi}|\D^{k-1}p|^2+|\D^{k}v|^2dx+C.\nonumber
	\end{align}
	Applying $\D^{k+1}$ to \eqref{O-F-3}, multiplying the resulting equation by $\D^{k+1}u\phi^2$ and integrating over $\R^3$ give
	\begin{align}\label{3.29}
	&\frac{1}{2}\frac{d}{dt}\int_{\R^3}|\D^{k+1}u|^2\phi^2dx-\int_{\R^3}\D^{k+1}\D_\alpha W_{p_\alpha^i}\D^{k+1}u^i\phi^2dx\nonumber\\
	&=-\int_{\R^3}\D^{k+1}\left(\D_\alpha\left(u^kW_{p_\alpha^k}\right)u^i\right)\D^{k+1}u^i\phi^2dx\nonumber\\
	&\quad+\int_{\R^3}\D^{k+1}\left(W_{p_\alpha^k}\D_\alpha u^ku^i\right)\D^{k+1}u^i\phi^2dx\\
	&\quad-\int_{\R^3}\D^{k+1}\left(W_{u^i}-W_{u^i}u^iu^k\right)\D^{k+1}u^i\phi^2dx\nonumber\\
	&\quad-\int_{\R^3}\D^{k+1}(v\cdot\D u^i)\D^{k+1}u^i\phi^2dx\nonumber\\
	&=:J_1+J_2+J_3+J_4.\nonumber
	\end{align}
	Since $W(u,p)$ is quadratic in $u,p$ and
	$$\D_\beta W_{p_\alpha^i}=W_{p_\alpha^ip_\g^j}\D_{\beta\g}^2 u^j+W_{p_\alpha^iu^j}\D_\beta u^j,$$
	it follows from integration by parts and \eqref{uni-ell} that
	\begin{align}\label{3.30}
	&-\int_{\R^3}\D^{k+1}\D_\alpha W_{p_\alpha^i}\D^{k+1}u^i\phi^2dx\nonumber\\
	&=\int_{\R^3}\D^{k+1} W_{p_\alpha^i}\D^{k+1}\D_\alpha u^i\phi^2dx+\int_{\R^3}\D^{k+1}W_{p_\alpha^i}\D^{k+1} u^i\D_\alpha(\phi^2)dx\nonumber\\
	&=\int_{\R^3}W_{p_\alpha^ip_\g^j}\D^k\D_{\beta\g}^2 u^j\D^k\D_{\beta\alpha}^2u^i\phi^2dx+\int_{\R^3}\D^{k+1}W_{p_\alpha^i}\D^{k+1} u^i\D_\alpha(\phi^2)dx\nonumber\\
	&\quad+\int_{\R^3}\left(\D^k\left(W_{p_\alpha^ip_\g^j}\D_{\beta\g}^2 u^j\right)-W_{p_\alpha^ip_\g^j}\D^k\D_{\beta\g}^2 u^j\right)\D^k\D_{\beta\alpha}^2u^i\phi^2dx\\
	&\quad+\int_{\R^3}\D^k\left(W_{p_\alpha^iu^j}\D_\beta u^j\right)\D^k\D_{\beta\alpha}^2u^i\phi^2dx\nonumber\\
	&\geq \frac{7a}{8}\int_{\R^3}|\D^{k+2}u|^2\phi^2dx-C\int_{\R^3}|\D u|^2|\D^{[\frac{k+1}{2}]}u|^2|\D^{\lceil\frac{k+1}{2}\rceil}u|^2\phi^2dx\nonumber\\
	&\quad-C\int_{\R^3}\left(|\D u|^2|\D^{k+1}u|^2+|\D^{[\frac{k+2}{2}]}u|^2|\D^{\lceil\frac{k+2}{2}\rceil}u|^2\right)\phi^2dx\nonumber\\
	&\quad-C\int_{\text{spt}\phi}|\D^{k+1}u|^2\phi^2dx-C,\nonumber
	\end{align}
	which is obvious for the case $k=2$ and also holds true for $k\geq 3$ since, in this case, $k-2\geq[\frac{k}{2}]$ implies $|\D^{[\frac{k}{2}]} u|\leq C$.
	
	The right hand side of \eqref{3.29} can be estimated as follows.
	
	Since $|u^i\D^{k}\Delta u^i|\leq\sum_{l=1}^{k+1}|\D^lu||\D^{k+2-l}|$ due to $|u|^2=1$ and
	$$\D^l(u^kW_{p_\alpha^k})=\D^{\lambda_1}u\#\D^{\lambda_2}u+\D^{\mu_1}u\#\D^{\mu_2}u\#\D^{\mu_3}u\#\D^{\mu_4}u$$
	where $\lambda_1+\lambda_2=\mu_1+\mu_2+\mu_3+\mu_4=l$ and $\#$ denotes the multi-linear map with constant coefficients, it follows from integration by parts and Young's inequality that
	\begin{align}\label{3.31}
	J_1&=\int_{\R^3}\D^{k}\left(\D_\alpha(u^kW_{p_k^\alpha})\right)u^i\D^{k}\Delta u^i\phi^2dx\nonumber\\
	&\quad+\int_{\R^3}\D^{k}\left(\D_\alpha(u^kW_{p_k^\alpha})u^i\right)\D^{k}\D_\beta u^i\D_\beta(\phi^2)dx\nonumber\\
	&\quad+\int_{\R^3}\left[\D^{k}\left(\D_\alpha(u^kW_{p_k^\alpha})u^i\right)-\D^{k}\left(\D_\alpha(u^kW_{p_k^\alpha})\right)u^i\right]\D^{k}\Delta u^i\phi^2dx\nonumber\\
	&\leq\frac{a}{8}\int_{\R^3}|\D^{k+2}u|^2\phi^2dx+C\int_{\R^3}|\D u|^2|\D^{[\frac{k+1}{2}]}u|^2|\D^{\lceil\frac{k+1}{2}\rceil}u|^2\phi^2dx\\
	&\quad+C\int_{\R^3}\left(|\D u|^2|\D^{k+1}u|^2+|\D^{[\frac{k+2}{2}]}u|^2|\D^{\lceil\frac{k+2}{2}\rceil}u|^2\right)\phi^2dx\nonumber\\
	&\quad+\int_{\text{spt}\phi}|\D^{k+1}u|^2dx+C,\nonumber
	\end{align}
	which is obvious for the case $k=2$ and also holds true for $k\geq 3$ due to the fact that  $|\D^{[\frac{k}{2}]} u|\leq C$  when $[\frac{k}{2}]\leq k-2$. Similarly, noting that $W_{u^i}=u\#\D u\#\D u$, it is easy to obtain that
	\begin{align}\label{3.32}
	J_2+J_3\leq&\frac{a}{8}\int_{\R^3}|\D^{k+2}u|^2\phi^2dx+C\int_{\R^3}|\D u|^2|\D^{[\frac{k+1}{2}]}u|^2|\D^{\lceil\frac{k+1}{2}\rceil}u|^2\phi^2dx\nonumber\\
	&+C\int_{\R^3}\left(|\D u|^2|\D^{k+1}u|^2+|\D^{[\frac{k+2}{2}]}u|^2|\D^{\lceil\frac{k+2}{2}\rceil}u|^2\right)\phi^2dx\\
	&+\int_{\text{spt}\phi}|\D^{k+1}u|^2dx+C\nonumber
	\end{align}
	and
	\begin{align}\label{3.33}
	J_4\leq&\frac{a}{8}\int_{\R^3}|\D^{k+2}u|^2\phi^2dx+C\int_{\R^3}|v|^2|\D^{k+1}u|^2\phi^2dx\nonumber\\
	&+C\int_{\R^3}\left(|\D^{[\frac{k}{2}]}v|^2|\D^{\lceil\frac{k}{2}\rceil+1}u|^2+|\D^{[\frac{k+2}{2}]}u|^2|\D^{\lceil\frac{k}{2}\rceil}v|^2\right)\phi^2dx+C.
	\end{align}
	Substituting \eqref{3.30}-\eqref{3.33} into \eqref{3.29}, we obtain
	\begin{align}\label{3.34}
	&\frac{1}{2}\frac{d}{dt}\int_{\R^3}|\D^{k+1}u|^2\phi^2dx+\frac{a}{2}\int_{\R^3}|\D^{k+2}u|^2\phi^2dx\nonumber\\
	&\leq C\int_{\R^3}|v|^2|\D^{k+1}u|^2\phi^2dx+C\int_{\R^3}|\D u|^2|\D^{[\frac{k+1}{2}]}u|^2|\D^{\lceil\frac{k+1}{2}\rceil}u|^2\phi^2dx\nonumber\\
	&\quad+C\int_{\R^3}\left(|\D u|^2|\D^{k+1}u|^2+|\D^{[\frac{k+2}{2}]}u|^2|\D^{\lceil\frac{k+2}{2}\rceil}u|^2\right)\phi^2dx\\
	&\quad+C\int_{\R^3}\left(|\D^{[\frac{k}{2}]}v|^2|\D^{\lceil\frac{k}{2}\rceil+1}u|^2+|\D^{[\frac{k+2}{2}]}u|^2|\D^{\lceil\frac{k}{2}\rceil}v|^2\right)\phi^2dx\nonumber\\
	&\quad+\int_{\text{spt}\phi}|\D^{k+1}u|^2dx+C.\nonumber
	\end{align}
	Summing \eqref{3.27} with \eqref{3.34} gives
	\begin{align}\label{3.37}
	&\frac{d}{dt}\int_{\R^3}\left(|\D^kv|^2+|\D^{k+1}u|^2\right)\phi^2dx+\int_{\R^3}(|\D^{k+1}v|^2+a|\D^{k+2}u|^2)\phi^2dx\nonumber\\
	&\leq C\int_{\R^3}|\D u|^2|\D^{[\frac{k+1}{2}]}u|^2|\D^{\lceil\frac{k+1}{2}\rceil}u|^2\phi^2dx\nonumber\\
	&+C\int_{\R^3}\left(|\D^{[\frac{k}{2}]}v|^2|\D^{\lceil\frac{k}{2}\rceil+1}u|^2+|\D^{[\frac{k+2}{2}]}u|^2|\D^{\lceil\frac{k}{2}\rceil}v|^2\right)\phi^2dx\\
	&+C\int_{\R^3}\left(|\D^{[\frac{k}{2}]}v|^2|\D^{\lceil\frac{k}{2}\rceil}v|^2+|\D^{[\frac{k+2}{2}]}u|^2|\D^{\lceil\frac{k+2}{2}\rceil}u|^2\right)\phi^2dx\nonumber\\
	&+C\int_{\R^3}(|v|^2+|\D u|^2)(|\D^kv|^2+|\D^{k+1}u|^2)\phi^2dx\nonumber\\
	&+C\int_{\text{spt}\phi}\left(|\D^{k+1}u|^2+|\D^kv|^2+|\D^{k-1}p|^2\right)dx+C=:\sum_{i=1}^{5}K_i+C.\nonumber
	\end{align}
	It remains to estimate $K_1, \cdots, K_5$. Since $k-3\geq [\frac{k}{2}]$ for any $k\geq 5$, \eqref{3.19} and \eqref{3.20} yield $\sum_{i=1}^{3}K_i\leq C,\quad\forall k\geq 5.$ Hence, it only need to estimate $K_1$,$K_2$ and $K_3$ for $k=2,3,4$. For $k=2$,
	\begin{align}
	\sum_{i=1}^{3}K_i\leq C\int_{\R^3}(|\D v|^4+|\D^2u|^4)\phi^2dx.
	\end{align}
	It follows from integration by parts that
	\begin{align}\label{3.38}
	&C\int_{\R^3}(|\D v|^4+|\D^2u|^4)\phi^2dx\nonumber\\
	&\leq C\int_{\R^3}(|v||\D v|^2|\D^2v|+|\D u||\D^3 u|^2|\D^2u|^2)\phi^2dx\nonumber\\
	&\quad+C\int_{\R^3}(|v||\D v|^3+|\D u||\D^2u|^3)|\D\phi|\phi dx\\
	&\leq\frac{C}{2}\int_{\R^3}(|\D v|^4+|\D^2u|^2)\phi^2dx+\frac{C}{2}\int_{\R^3}(|v|^2|\D^2v|^2+|\D u|^2|\D^3u|^2)\phi^2dx\nonumber\\
	&\quad+\frac{C}{2}\int_{\R^3}(|v|^2|\D v|^2+|\D u|^2|\D^2u|^2)|\D\phi|^2dx.\nonumber
	\end{align}
	Then, by using Sobolev embedding theorem and \eqref{3.19}, one has
	\begin{align}\label{3.39}
	\sum_{i=1}^{3}K_i&\leq C\int_{\R^3}(|v|^2+|\D u|^2)\left((|\D^2 v|^2+|\D^3u|^2)\phi^2+(|\D v|^2+|\D^2u|^2)|\D\phi^2|\right)dx\nonumber\\
	&\leq C\varepsilon_1^2\int_{\R^3}(|\D^3v|^2+|\D^4u|^2)\phi^2dx+C.
	\end{align}
	For $k=3$, \eqref{3.20} implies $|v|+|\D u|\leq C$. It follows from integration by parts and \eqref{3.19} that
	\begin{align*}
	\sum_{i=1}^{3}K_i\leq&C\int_{\R^3}\left(|\D v|^2+|\D^2u|^2\right)\left(|\D^2v|^2+|\D^3 u|^2\right)\phi^2dx+C\nonumber\\
	&\leq C\int_{\R^3}\left(|v||\D^2v|+|\D u||\D^3u|\right)\left(|\D^2v|^2+|\D^3 u|^2\right)\phi^2dx\nonumber\\
	&\quad+C\int_{\R^3}\left(|v||\D v|+|\D u||\D^2u|\right)\left(|\D^2v||\D^3v|+|\D^3 u||\D^4u|\right)\phi^2dx\nonumber\\
	&\quad+C\int_{\R^3}\left(|v||\D v|+|\D u||\D^2u|\right)\left(|\D^2v|^2+|\D^3 u|^2\right)\D\phi\phi dx+C\nonumber\\
	&\leq\frac{C}{2}\int_{\R^3}\left(|\D v|^2+|\D^2u|^2\right)\left(|\D^2v|^2+|\D^3 u|^2\right)\phi^2dx\\
	&\quad+C\int_{\R^3}\left(|\D^3v|^2+|\D^4u|^2\right)\phi^2+\left(|\D^2v|^2+|\D^3u|^2|\D\phi|^2\right)dx+C,\nonumber
	\end{align*}
	which implies that
	\begin{equation}\label{3.41}
	\sum_{i=1}^{3}K_i\leq C,\quad\text{for}\quad k=3.
	\end{equation}
	Similarly, since $|\D v|+|\D^2u|\leq C$  for $k=4$, one has
	\begin{align}\label{3.42}
	\sum_{i=1}^{3}K_i&\leq C\int_{\R^3}|\D^2v|^2|\D^3u|^2\phi^2dx+C\int_{\R^3}(|\D^3u|^4+|\D^2 v|^4)\phi^2dx\nonumber\\
	&\leq \frac{C}{2}\int_{\R^3}|\D^2v|^2|\D^3u|^2\phi^2dx+\frac{C}{2}\int_{\R^3}(|\D^3u|^4+|\D^2 v|^4)\phi^2dx\\
	&\quad+C\int_{\R^3}(|\D^3v|^2+|\D^4u|^2)\phi^2dx+C\int_{\R^3}(|\D^2v|^2+|\D^3u|^2)|\D\phi|^2dx\nonumber
	\end{align}
	which implies that
	\begin{align}
	\sum_{i=1}^{3}K_i\leq C,\quad\text{for}\quad k=4.
	\end{align}
	To estimate $K_4$, it follows from the Sobolev embedding theorem that
	\begin{align}\label{3.43}
	&K_4\leq \left(\int_{B_R(x_0)}(|v|^3+|\D u|^3)\right)^\frac{2}{3}\left(\int_{\R^3}(|\D^kv|^6+|\D^{k+1}u|^6)\phi^6\right)^\frac{1}{3}\\
	&\leq C\varepsilon_1^2\int_{\R^3}(|\D^{k+1}v|^2+|\D^{k+2}u|^2)\phi^2+(|\D^{k}v|^2+|\D^{k+1}u|^2)\D\phi^2dx.\nonumber
	\end{align}
	To estimate $K_5$, the assumption \eqref{2.19} yields
	\begin{align}\label{3.44}
	\int_{\tau}^{T}\int_{\text{spt}\phi}|\D^kv|^2+|\D^{k+1}u|^2dx\leq C.
	\end{align}
	For the pressure term in $K_5$, since $\D p$ solves
	\begin{equation*}
	-\Delta\D^{k-1} p=\nabla_{ij}\D^{k-1}[\nabla_iu^kW_{p_j^k}(u,\nabla u)+v^jv^i+\sigma^L_{ij}],\quad\text{on}~~\mathbb{R}^3\times[0,T],
	\end{equation*}
	it follows from standard elliptic estimates that
	\begin{align}\label{3.45}
	\int_{\tau}^{T}\int_{\text{spt}\phi}|\D^{k-1} p|^2dxdt&\leq \sup_{\tau\leq t\leq T}\int_{2\text{spt}\phi}(|\D^{k-1}v|^2+|\D^k u|^2)\nonumber\\
	&+\int_{\tau}^{T}\int_{2\text{spt}\phi}(|p-c|^2+|\D^kv|^2+|\D^{k+1}u|^2)\leq C.
	\end{align}
	Integrating over $[\tau,t]$ and substituting \eqref{3.38}-\eqref{3.45} into \eqref{3.37} yield that \eqref{3.19} holds for $l=k$ by choosing $\varepsilon_1$ small enough. This complete a proof of this lemma.
\end{proof}

\section{\bf Existence of $L_{uloc}^3$-solutions}

In this section, we prove existence of $L_{uloc}^3$-solutions to \eqref{O-F-1}-\eqref{O-F-3} in $\R^3$.  We first approximate $(v_0,u_0)$ satisfying \eqref{Ini} by smooth data in the following lemma, which is stated and proved in Lemma 5.1 of \cite{HW}.
\begin{lemma}\label{appro.-lem}
	For a sufficiently small $\varepsilon_0>0$, let $(v_0,u_0):\R^3\rightarrow\R^3\times S^2$ satisfy \eqref{Ini}. Then,  there exists a sequence of functions $$(v_0^k, u_0^k)\in C^\infty(\R^3;\R^3\times S^2)\cap\cap_{q=2}^3 (L^q(\R^3,\R^3)\times \dot{W}^{1,q}_b(\R^3,S^2))$$ such that the following properties hold:
	\begin{itemize}
		\item[(i)] $\D\cdot v_0^k=0$ in $\R^3$ for all $k\geq 1$.
		\item[(ii)] There exist $C_0>0$ and $k_0>1$ such that for any $k\geq k_0$,
		$$\|(v_0^k,u_0^k)\|_{L^3_{R_0}(\R^3)}\leq C_0\varepsilon_0.$$
		\item[(iii)] $(v_0^k,u_0^k,\D u_0^k)\rightarrow (v_0,u_0,\D u_0)$ in $L^q_{loc}(\R^3)$ for $q=2,3$, as $k\rightarrow +\infty$.
	\end{itemize}
\end{lemma}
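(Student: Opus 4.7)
The plan is to construct $(v_0^k, u_0^k)$ by truncation followed by mollification, with a final nearest-point projection for the director to enforce the $S^2$-constraint. The critical ingredient is the smallness of $\|\D u_0\|_{L^3_{R_0}}$, which prevents the projection from degenerating. Fix a standard mollifier $\rho_\varepsilon$ and a cutoff $\chi_k\in C_c^\infty(\R^3)$ with $\chi_k\equiv 1$ on $B_k(0)$, $\text{supp}\,\chi_k\subset B_{2k}(0)$ and $|\D\chi_k|\leq C/k$.

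For the velocity I would set $v_0^k=\rho_{1/k}\ast P(\chi_k v_0)$, where $P$ is the Leray--Helmholtz projector. Since $v_0\in L^3_{uloc}\subset L^2_{loc}\cap L^3_{loc}$ and $\chi_k$ has compact support, $\chi_k v_0\in L^2(\R^3)\cap L^3(\R^3)$, and the Calder\'on--Zygmund boundedness of $P$ on $L^q$ for $1<q<\infty$ gives $P(\chi_k v_0)\in L^2\cap L^3$. The subsequent convolution with $\rho_{1/k}$ produces a smooth, divergence-free field, establishing (i) and the velocity parts of (ii) and (iii) via standard continuity and approximation properties of the mollifier.

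For the director, write $w_0=u_0-b$ and define
\[
u_0^k(x)=\frac{b+\chi_k(x)\bigl((\rho_{\varepsilon_k}\ast u_0)(x)-b\bigr)}{\bigl|b+\chi_k(x)\bigl((\rho_{\varepsilon_k}\ast u_0)(x)-b\bigr)\bigr|}
\]
for a sequence $\varepsilon_k\to 0^+$ with $\varepsilon_k\leq R_0$ chosen small enough. By construction $u_0^k\equiv b$ outside $B_{2k}$ and $u_0^k$ is smooth provided the denominator stays bounded away from zero; this positivity is the main obstacle. Using $|u_0|=1$ a.e., one obtains the identity
\[
1-\bigl|(\rho_\varepsilon\ast u_0)(x)\bigr|^2=\int\rho_\varepsilon(x-y)\,\bigl|u_0(y)-(\rho_\varepsilon\ast u_0)(x)\bigr|^2\,dy,
\]
and a Poincar\'e-type estimate combined with H\"older control $\int_{B_\varepsilon(x)}|\D u_0|^2\,dy\leq C\varepsilon\|\D u_0\|_{L^3(B_\varepsilon(x))}^2$ yields $1-|\rho_\varepsilon\ast u_0|^2\leq C\|\D u_0\|_{L^3(B_\varepsilon(x))}^2\leq C\varepsilon_0^2$ uniformly in $x$ whenever $\varepsilon\leq R_0$, so $|\rho_{\varepsilon_k}\ast u_0|\geq 1/\sqrt{2}$ throughout $B_k$. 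On the transition annulus $B_{2k}\setminus B_k$, the decay hypothesis $\fint_{B_1(x)}|u_0-b|^2\,dy\to 0$ as $|x|\to\infty$ together with the Young-type bound $|(\rho_{\varepsilon_k}\ast u_0)(x)-b|\leq C\varepsilon_k^{-3/2}\|u_0-b\|_{L^2(B_1(x))}$ forces the numerator to be close to $b$ once $k$ is large compared with a suitable negative power of $\varepsilon_k$, keeping the denominator near $1$. With the denominator bounded below, the projection $v\mapsto v/|v|$ is smooth and Lipschitz on the relevant set, so (ii) and (iii) for $u_0^k$ follow from the $L^q_{loc}$-convergence $\rho_{\varepsilon_k}\ast u_0\to u_0$ combined with the observation that on any compact subset of $\R^3$, $\chi_k$ is eventually identically one.
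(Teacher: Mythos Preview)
The paper does not supply its own argument for this lemma; it simply cites Lemma~5.1 of Hineman--Wang. Your director construction via mollification, cutoff, and nearest-point projection is essentially correct, and the variance identity you use to bound $1-|\rho_\varepsilon\ast u_0|^2$ by $C\|\nabla u_0\|_{L^3(B_\varepsilon(x))}^2$ is the right mechanism for keeping the denominator away from zero.

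Your velocity construction, however, has a genuine gap in part~(iii). The global Leray projector $P$ is a Calder\'on--Zygmund operator and does not localize: for data that are merely in $L^3_{uloc}$, $P(\chi_k v_0)$ need not converge to $v_0$ in $L^q_{loc}$. A concrete counterexample, admissible under \eqref{Ini} for $R_0$ small, is $v_0=c\,e_1$ with $\chi_k$ radial. Then $\nabla\cdot(\chi_k e_1)=\partial_1\chi_k$, and setting $\phi_k=\Delta^{-1}\chi_k$ one has $\phi_k$ radial with $\Delta\phi_k=1$ on $B_k$, whence $\phi_k'(r)=r/3$ and $\nabla\partial_1\phi_k=\tfrac{1}{3}e_1$ throughout $B_k$. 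Thus
\[
P(\chi_k e_1)(x)=\chi_k(x)e_1-\nabla\partial_1\phi_k(x)=\tfrac{2}{3}e_1,\qquad |x|<k,
\]
so $v_0^k\to\tfrac{2}{3}c\,e_1\neq v_0$ locally, and (iii) fails. The fix is to make the divergence correction \emph{local}: mollify first to get a smooth divergence-free field $\tilde v=\rho_{\varepsilon_k}\ast v_0$, then set $v_0^k=\chi_k\tilde v+\psi_k$, where $\psi_k$ is obtained by applying a Bogovskii operator on the annulus $B_{2k}\setminus B_k$ to $-\nabla\chi_k\cdot\tilde v$ (whose integral vanishes by the divergence theorem) and extending by zero. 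Since $\psi_k$ is supported in the annulus, (iii) is immediate on any compact set, while the Bogovskii $L^q$ bounds together with $|\nabla\chi_k|\leq C/k$ give the control needed for (ii).
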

\begin{proof}[{\bf Proof of Theorem \ref{thm1}:}]
For initial data $(v_0,u_0)$ satisfying \eqref{Ini}, it follows from Lemma \ref{appro.-lem} that there are approximation smooth data $(v_0^k,u_0^k)$ for $(v_0,u_0)$ such that
	\begin{equation}\label{ini-appr}
	\sup_{x_0\in\R^3}\int_{B_{R_0}(x_0)}|\D u_0^k|^3+|v_0^k|^3dx\leq C_0^3\varepsilon_0^3\leq\frac{\varepsilon_1^3}{2}.
	\end{equation}
	where $\varepsilon_1=C\varepsilon_0$ to be determined later.
	Then, by using the result in Hong-Li-Xin \cite{HLX} or Wang-Wang \cite{WW}, there exists a unique local smooth solution $(v^k, u^k):\R^3\times[0,T^k]\rightarrow \R^3\times S^2$ to \eqref{O-F-1}-\eqref{O-F-3} with smooth initial data $(v_0^k, u_0^k)$ such that
	$$(v^k,u^k)\in C^\infty(\R^3\times(0,T^k))\cap C([0,T^k];H^1(\R^3,\R^3)\times H_b^2(\R^3,S^2)).$$
	Therefore, there exists a maximal time $t_k\in(0,T^k]$ such that
	\begin{equation}\label{assumption}
	\sup_{0\leq s\leq t_k, x_0\in\mathbb R^3}\int_{B_{R_0}(x_0)}|\nabla u^k|^3+|v^k|^3dx\leq \varepsilon_1^3.
	\end{equation}
	
	We claim that there exists a  uniform lower bound  of $t_k$ such that $t_k\geq \sigma R_0^2$ for some small constant $\sigma>0$.
	
	The proof of claim relies on delicate covering arguments, the estimates in Proposition \ref{prop1} and the following interpolation inequality (c.f. \cite{CKN})
\begin{align}\label{inter}
\int_{B_r}|f|^3dx\leq C(\int_{B_r}|f|^2dx)^{\frac{3}{4}}(\int_{B_r}|\nabla f|^2dx)^\frac{3}{4}+C(\frac{1}{r}\int_{B_r}|f|^2dx)^{\frac{3}{2}}.
\end{align}
We prove the claim by contradiction. Assume that $t_k=\sigma R^2<\sigma R_0^2$. In order to apply covering arguments, let $\R^3$ be covered by infinitely many balls of radius $\frac{R}{4}$, that is, $\R^3=\bigcup\limits_{x_i\in\mathcal{I}}B_{\frac{R}{4}}(x_i)$ with an index set $\mathcal{I}$ being all the centers of covering balls. Then, $$B_{R_0}(x_0)\subset\bigcup\limits_{x_i\in \mathcal{I}_{x_0}}B_{\frac{R}{4}}(x_i),$$ where $\mathcal{I}_{x_0}=\{x_i\in\mathcal{I}\big| B_{R_0}(x_0)\cap B_{\frac{R}{4}}(x_i)\neq\emptyset\}$. It is clear that the number of elements in $\mathcal{I}_{x_0}$ is bounded by $|\mathcal{I}_{x_0}|\leq C\left(\frac{R_0}{R}\right)^3$. In particular, $B_R(x_0)$ can be covered by a fixed number (independent of $R$ and $x_0$) of balls $B_{\frac{R}{2}}(x_i)$. Then using \eqref{inter}  and H\"{o}lder's inequality,  we have, for any $x_0\in\R^3$,
	\begin{align}\label{4.4}
	&\int_{B_{R_0}(x_0)}|v^k(t_k)|^3+|\D u^k(t_k)|^3dx\nonumber\\
	&\leq C\sum_{x_i\in \mathcal{I}_{x_0}}\int_{B_{\frac{R}{4}}(x_i)}|v^k(t_k)|^3+|\D u^k(t_k)|^3dx\nonumber\\
	&\leq C\sum_{x_i\in \mathcal{I}_{x_0}}\left(\frac{1}{R}\int_{B_{\frac{R}{4}}(x_i)}|v^k(t_k)|^2+|\D u^k(t_k)|^2dx\right)^\frac{3}{2}\\
	&\quad+C\sum_{x_i\in \mathcal{I}_{x_0}}\left(R\int_{B_{\frac{R}{4}}(x_i)}|\D v^k(t_k)|^2+|\D^2 u^k(t_k)|^2dx\right)^\frac{3}{2}\nonumber\\
	&=:I+J.\nonumber
	\end{align}
	For $I$, it follows from \eqref{p-1} that
	 \begin{align}\label{4.5}
	 I&\leq C\sum_{x_i\in\mathcal{I}_{x_0}}\left(\frac{1}{R}\int_{B_{\frac{R}{2}}(x_i)}|v_0^k|^2+|\D u_0^k|^2dx\right)^\frac{3}{2}\nonumber\\
	 &\quad+C\sum_{x_i\in\mathcal{I}_{x_0}}\left(\frac{1}{R^3}\int_{0}^{t_k}\int_{B_{\frac{R}{2}}(x_i)}|v^k|^2+|\D u^k|^2dxds\right)^\frac{3}{2}\\
	 &\quad+C\sum_{x_i\in\mathcal{I}_{x_0}}\left(\frac{1}{R}\int_{0}^{t_k}\int_{\R^3}(p-c(s))v\cdot\D\phi_i\phi_idxds\right)^\frac{3}{2}\nonumber\\
	 &=:I_1+I_2+I_3,\nonumber
	 \end{align}
	 where $\phi_i$ is a cut-off function compactly supported in $B_{\frac{R}{2}}(x_i)$ and $\phi_i\equiv 1$ on $B_{\frac{R}{4}}(x_i)$.
	 Then, by using H\"{o}lder's inequality, the standard covering argument and \eqref{ini-appr}, we obtain
	 \begin{align}\label{4.6}
	 I_1&\leq C|B_1|^\frac{1}{2}\sum_{x_i\in \mathcal{I}_{x_0}}\int_{B_{\frac{R}{2}}(x_i)}|v_0^k|^3+|\D u_0^k|^3dx\\
	 &\leq C\int_{B_{2R_0}(x_0)}|v_0^k|^3+|\D u_0^k|^3dx\nonumber\\
	 &\leq C\sup_{y\in\R^3}\int_{B_{R_0}(y)}|v_0^k|^3+|\D u_0^k|^3dx\leq C_1\varepsilon_0^3,\nonumber
	 \end{align}
	 where we have used $\bigcup\limits_{x_i\in\mathcal{I}_{x_0}}B_{\frac{R}{2}}(x_i)\subset B_{2R_0}(x_0)$.
	 Similarly, using \eqref{assumption} and $t_k=\sigma R^2$, we have
	 \begin{align}\label{4.7}
	 I_2&\leq CR^{-\frac{9}{2}}\sum_{x_i\in\mathcal{I}_{x_0}}\left(\int_{0}^{t_k}\int_{B_{\frac{R}{2}}(x_i)}|v^k|^3+|\D u^k|^3dxds\right)\cdot\left(\int_{0}^{t_k}\int_{B_{\frac{R}{2}}(x_i)}1dxds\right)^\frac{1}{2}\\
	 &\leq C|B_1|^\frac{1}{2}t_k^\frac{1}{2}R^{-3}\int_{0}^{t_k}\left(\sum_{x_i\in\mathcal{I}_{x_0}}\int_{B_{\frac{R}{2}}(x_i)}|v^k|^3+|\D u^k|^3dx\right)ds\nonumber\\
	 &\leq Ct_k^\frac{1}{2}R^{-3}\int_{0}^{t_k}\int_{B_{2R_0}(x_0)}|v^k|^3+|\D u^k|^3dxds\nonumber\\
	 &\leq Ct_k^\frac{3}{2}R^{-3}\sup_{0\leq s\leq t_k,y\in\R^3}\int_{B_{R_0}(y)}|v^k|^3+|\D u^k|^3dx\leq C_1\sigma^\frac{3}{2}\varepsilon_1^3\nonumber
	 \end{align}
	 and it follows from \eqref{assumption} and \eqref{pres1} that
	 \begin{align}\label{4.8}
	 I_3&\leq CR^{-3}\sum_{x_i\in \mathcal{I}_{x_0}}\left(\int_{0}^{t_k}\int_{B_{\frac{R}{2}(x_i)}}(p-c(s))^{\frac{3}{2}}\phi_i^\frac{3}{2}dxds\right)\left(\int_{0}^{t_k}\int_{B_{\frac{R}{2}(x_i)}}|v|^3dx\right)^\frac{1}{2}\nonumber\\
	 &\leq C\varepsilon_1^\frac{3}{2}t_k^\frac{1}{2}R^{-3}\sum_{x_i\in\mathcal{I}_{x_0}}\left(\int_{0}^{t_k}\int_{B_{\frac{R}{2}}(x_i)}(p-c(s))^\frac{3}{2}\phi_i^\frac{3}{2}dxds\right)\nonumber\\
	 &\leq C\varepsilon_1^\frac{3}{2}t_k^\frac{1}{2}R^{-3}\int_{0}^{t_k}\int_{B_{2R_0(x_0)}}(p-c(s))^\frac{3}{2}\phi^\frac{3}{2}dxds\\
	 &\leq C\varepsilon_1^\frac{9}{2}t^\frac{3}{2}_kR^{-3}\leq C_1\sigma^\frac{3}{2}\varepsilon_1^\frac{9}{2},\nonumber
	 \end{align}
	 where $\phi$ is the cut-off function compactly supported in $B_{2R_0}(x_0)$. Substituting \eqref{4.6}-\eqref{4.8} into \eqref{4.5} and choosing $\varepsilon_0$, $\sigma$ small enough such that $2C_1^\frac{1}{3}\varepsilon_0=\varepsilon_1<1$ and $C_1\sigma^\frac{3}{2}<\frac{1}{16}$, we have
	 \begin{equation}\label{4.9}
	 I\leq C_1\varepsilon_0^3+C_1\sigma^\frac{3}{2}(\varepsilon_1^3+\varepsilon_1^\frac{9}{2})\leq \frac{\varepsilon_1^3}{4}.
	 \end{equation}
	 To estimate $J$,  \eqref{p-2} yields that for any $\delta t_k\leq t\leq t_k$,
	 \begin{align}\label{4.10}
	 &C\sum_{x_i\in \mathcal{I}_{x_0}}\left(R\int_{B_{\frac{R}{4}}(x_i)}|\D v^k(t)|^2+|\D^2 u^k(t)|^2dx\right)^\frac{3}{2}\\
	 &\leq C\sum_{x_i\in\mathcal{I}_{x_0}}\left(\frac{1}{R}\int_{B_R(x_i)}|v_0|^2+|\D u_0|^2dx\right)^\frac{3}{2}\nonumber\\
	 &+C\sum_{x_i\in\mathcal{I}_{x_0}}\left(\frac{1}{R^3}\int_{0}^{t}\int_{B_{\frac{R}{2}}(x_i)}|v^k|^2+|\D u^k|^2dxds\right)^\frac{3}{2}\nonumber\\
	 &+C\sum_{x_i\in\mathcal{I}_{x_0}}\left(\frac{1}{R}\int_{0}^{t}\int_{B_{\frac{R}{2}}(x_i)}(p-c(s))v\cdot\D\phi_i\phi_idxds\right)^\frac{3}{2}\nonumber\\
	 &+C\sum_{x_i\in\mathcal{I}_{x_0}}\left(\frac{1}{R}\int_{\delta t_k}^{t}\int_{B_{\frac{R}{2}}(x_i)}(p-c(s))^2\phi_i^4dxds\right)^\frac{3}{2}\nonumber\\
	 &=:J_1+J_2+J_3+J_4,\nonumber
	 \end{align}
	  where $\phi_i$ is a cut-off function compactly supported in $B_{\frac{R}{2}}(x_i)$ and $\phi_i\equiv 1$ on $B_{\frac{R}{4}}(x_i)$.
	 By the same arguments as in \eqref{4.6}-\eqref{4.8}, one has
	 \begin{align}\label{4.11}
	 J_1+J_2+J_3\leq \frac{\varepsilon_1^3}{8}.
	 \end{align}
	 It follows from H\"{o}lder's inequality, \eqref{assumption} and \eqref{pres3} that
	 \begin{align}\label{4.12}
	 J_4&\leq CR^{-\frac{3}{2}}\sum_{x_i\in\mathcal{I}_{x_0}}\left(\int_{\delta t_k}^{t}\int_{B_{\frac{R}{2}}(x_i)}(p-c)^3\phi_i^6dxds\right)\cdot\left(\int_{\delta t_k}^{t}\int_{B_{\frac{R}{2}}(x_i)}1dxds\right)^\frac{1}{2}\\
	 &\leq C(1-\delta)t_k^\frac{1}{2}\sum_{x_i\in\mathcal{I}_{x_0}}\int_{\delta t_k}^{t}\int_{B_{\frac{R}{2}}(x_i)}(p-c)^3\phi_i^6dxds\nonumber\\
	 &\leq C(1-\delta)^2\frac{t_k^\frac{3}{2}}{R_0^3}\varepsilon_1^6+C(1-\delta)t_k^\frac{1}{2}\sum_{x_i\in\mathcal{I}_{x_0}}\int_{\delta t_k}^{t}\int_{B_{\frac{R}{2}}(x_i)}(|v^k|^6+|\D u^k|^6)\phi_i^6dxds.\nonumber
	 \end{align}
	 For the second term in the last inequality of \eqref{4.12}, using the Sobolev embedding theorem, one has
	 \begin{align}\label{3.32-2}
	 &Ct_k^\frac{1}{2}\sum_{x_i\in\mathcal{I}_{x_0}}\int_{\delta t_k}^{t}\int_{B_{\frac{R}{2}}(x_i)}(|v^k|^6+|\D u^k|^6)\phi_i^6dxds\nonumber\\
	 &\leq Ct_k^\frac{1}{2}\sum_{x_i\in \mathcal{I}_{x_0}}\int_{\delta t_k}^{t}\left(\int_{\R^3}(|v^k|^2+|\D u^k|^2)|\D\phi_i|^2dx\right)^3ds\\
	 &\quad+Ct_k^\frac{1}{2}\sum_{x_i\in \mathcal{I}_{x_0}}\int_{\delta t_k}^{t}\left(\int_{\R^3}(|\D v^k|^2+|\D^2u^k|^2)\phi_i^2dx\right)^3ds=:K_1+K_2.\nonumber
	 \end{align}
	 To estimate $K_1$, it follows from H\"{o}lder's inequality, \eqref{assumption} and a standard covering argument that
	 \begin{align}\label{3.33-3}
	 K_1&\leq \frac{Ct_k^\frac{1}{2}}{R^6}\sum_{x_i\in \mathcal{I}_{x_0}}\int_{\delta t_k}^{t}\left(\int_{B_{\frac{R}{2}(x_i)}}|v^k|^3+|\D u^k|^3dx\right)^2\cdot\left(\int_{B_{\frac{R}{2}(x_i)}}1dx\right)ds\nonumber\\
	 &\leq\frac{C\varepsilon_1^3t_k^\frac{1}{2}}{R^3}\int_{\delta t_k}^{t}\left(\sum_{x_i\in \mathcal{I}_{x_0}}\int_{B_{\frac{R}{2}(x_i)}}|v^k|^3+|\D u^k|^3dx\right)ds\\
	 &\leq \frac{C\varepsilon_1^3t_k^\frac{1}{2}}{R^3}\int_{\delta t_k}^{t}\int_{B_{2R_0}(x_0)}|v^k|^3+|\D u^k|^3dxds\nonumber\\
	 &\leq \frac{C\varepsilon_1^3t_k^\frac{3}{2}}{R^3}\sup_{\delta t_k\leq s\leq t,y\in\R^3}\int_{B_{R_0}(y)}|v^k|^3+|\D u^k|^3dx\leq C_2\sigma^\frac{3}{2}\varepsilon_1^6.\nonumber
	 \end{align}
To estimate $K_2$, it follows from \eqref{p-2-2} that
\begin{align*}
&\sup_{\delta t_k\leq s\leq t,x_i\in\R^3}R\int_{B_{\frac{R}{4}}(x_i)}(|\D v^k|^2+|\D^2 u^k|)dx\\
&\leq \sup_{y\in\R^3}\frac{C}{R}\int_{B_{R}(y)}|v_0|^2+|\D u_0|^2dx\leq C|B_1|^\frac{1}{3}\varepsilon_0^2,
\end{align*}
 which implies
  \begin{align}\label{3.34-4}
 K_2&\leq Ct_k^\frac{1}{2}R^{-3}\int_{\delta t_k}^{t}\sum_{x_i\in \mathcal{I}_{x_0}}\left(R\int_{B_{\frac{R}{2}}(x_i)}(|\D v^k|^2+|\D^2 u^k|^2)dx\right)^3dx\\
 &\leq C\varepsilon_0^2t_k^\frac{1}{2}R^{-3}\int_{\delta t_k}^{t}\sum_{x_i\in \mathcal{I}_{x_0}}\left(R\int_{B_{\frac{R}{2}}(x_i)}(|\D v^k|^2+|\D^2 u^k|^2)dx\right)^\frac{3}{2}ds.\nonumber
 \end{align}
Substituting \eqref{4.11}-\eqref{3.34-4} into \eqref{4.10} and choosing $\sigma$ small such that $C_2\sigma^\frac{3}{2}\leq\frac{1}{8}$, we obtain, for any $x_0\in\R^3$ and $\delta t_k\leq t\leq t_k$,
	 \begin{align}\label{4.16}
	 &\sum_{x_i\in \mathcal{I}_{x_0}}C\left(R\int_{B_{\frac{R}{4}}(x_i)}|\D v^k(t)|^2+|\D^2 u^k(t)|^2dx\right)^\frac{3}{2}\\
	 &\leq C\varepsilon_0^2t_k^\frac{1}{2}R^{-3}\int_{\delta t_k}^{t}\sum_{x_i\in \mathcal{I}_{x_0}}\left(R\int_{B_{\frac{R}{2}}(x_i)}(|\D v^k|^2+|\D^2 u^k|^2)dx\right)^\frac{3}{2}ds+\frac{\varepsilon_1^3}{4}.\nonumber
	 \end{align}
	 It remains to estimate the first term on right hand side of \eqref{4.16}. Indeed, for any $x_i\in\mathcal{I}_{x_0}$, one has the covering $$B_{\frac{R}{2}(x_i)}\subset\bigcup\limits_{x_{ij}\in \mathcal{J}_{x_i}}B_{\frac{R}{4}}(x_{ij}),$$ where $\mathcal{J}_{x_i}=\{x_{ij}\in \mathcal{I}\big|B_{\frac{R}{2}}(x_i)\cap B_{\frac{R}{4}}(x_{ij})\neq\emptyset\}$ and $|J_{x_i}|\leq C$ is independent of $R$ and $x_i$. Denote $$\bigcup\limits_{x_i\in\mathcal{I}_{x_0}}B_{\frac{R}{2}}(x_i)\subset\bigcup\limits_{y_m\in \mathcal{M}_{x_0}} B_{\frac{R}{4}}(y_m),$$ where $\mathcal{M}_{x_0}=\bigcup\limits_{x_i\in \mathcal{I}_{x_0}}J_{x_i}\subset\mathcal{I}.$ Moreover, we have the following chain of coverings
	 $$B_{R_0}(x_0)\subset\bigcup\limits_{x_i\in\mathcal{I}_{x_0}}B_{\frac{R}{4}}(x_i)\subset\bigcup\limits_{x_i\in\mathcal{I}_{x_0}}B_{\frac{R}{2}}(x_i)\subset\bigcup\limits_{y_m\in \mathcal{M}_{x_0}} B_{\frac{R}{4}}(y_m)\subset B_{2R_0}(x_0).$$
	 Next, we divide all the balls of radius $\frac{R}{4}$ with centers $y_m\in\mathcal{M}_{x_0}$ into classes as follows. Note that the ball $B_{2R_0}(x_0)$ has the covering $B_{2R_0}(x_0)\subset\bigcup\limits_{l=1}^{L}B_{R_0}(z_l)$ for a fixed $L$, which is  independent of $R_0$ and $x_0$, and  define
	 $$\mathcal{A}_{x_0}^l=\{y_m\in \mathcal{M}_{x_0}\subset \mathcal{I}\big| B_{\frac{R}{4}}(y_m)\cap B_{R_0}(z_l)\neq\emptyset\}.$$
	 Then we have
	$$B_{R_0}(z_l)\subset\bigcup\limits_{y_m\in\mathcal{A}_{x_0}^l}B_{\frac{R}{4}}(y_m),\,\,\text{and}\,\,\bigcup\limits_{y_m\in\mathcal{M}_{x_0}}B_{\frac{R}{4}}(y_m)=\bigcup\limits_{l=1}^L\bigcup_{y_m\in\mathcal{A}_{x_0}^l}B_{\frac{R}{4}(y_m)}.$$

Since the covering of $\R^3=\bigcup\limits_{x_i\in\mathcal{I}}B_{\frac{R}{4}}(x_i)$ is given before,   there is a finite set  $\mathcal{I}_{\tilde{x}}$ such that  for any $\tilde{x}\in\R^3$, $B_{R_0}(\tilde{x})\subset\bigcup\limits_{\tilde{x}_i\in\mathcal{I}_{\tilde{x}}}B_{\frac{R}{4}}(\tilde{x}_i)$. In particular,
  $B_{R_0}(z_l)\subset\bigcup\limits_{y_m\in\mathcal{A}_{x_0}^l}B_{\frac{R}{4}}(y_m)$. Then  we have
	 \begin{align*}
	 &\sum_{y_m\in\mathcal{A}_{x_0}^l}\left(R\int_{B_{\frac{R}{4}}(y_m)}|\D v^k|^2+|\D^2u^k|^2dx\right)^\frac{3}{2}\\
	 &\leq \sup_{\tilde{x}\in\R^3}\sum_{\tilde{x}_i\in\mathcal{I}_{\tilde{x}}}\left(R\int_{B_{\frac{R}{4}}(\tilde{x}_i)}|\D v^k|^2+|\D^2u^k|^2dx\right)^\frac{3}{2}
	 \end{align*}
	 which implies
	 \begin{align}\label{4.17}
	 &\sum\limits_{x_i\in \mathcal{I}_{x_0}}\left(R\int_{B_{\frac{R}{2}}(x_i)}(|\D v^k|^2+|\D^2 u^k|^2)dx\right)^\frac{3}{2}\nonumber\\
	 &\leq\sum\limits_{x_i\in \mathcal{I}_{x_0}}\left(\sum_{x_{ij}\in \mathcal{J}_{x_i}}R\int_{B_{\frac{R}{4}}(x_{ij})}(|\D v^k|^2+|\D^2 u^k|^2)dx\right)^\frac{3}{2}\nonumber\\
	 &\leq C|J_{x_i}|^{\frac{3}{2}}\sum_{y_m\in\mathcal{M}_{x_0}}\left(R\int_{B_{\frac{R}{4}}(y_m)}|\D v^k|^2+|\D^2u^k|^2dx\right)^\frac{3}{2}\\
	 &\leq C\sum_{l=1}^L\sum_{y_m\in\mathcal{A}_{x_0}^l}\left(R\int_{B_{\frac{R}{4}}(y_m)}|\D v^k|^2+|\D^2u^k|^2dx\right)^\frac{3}{2}\nonumber\\
	 &\leq CL\sup_{\tilde{x}\in\R^3}\sum_{\tilde{x}_i\in\mathcal{I}_{\tilde{x}}}\left(R\int_{B_{\frac{R}{4}}(\tilde{x}_i)}|\D v^k|^2+|\D^2u^k|^2dx\right)^\frac{3}{2}.\nonumber
	 \end{align}
	 Substituting \eqref{4.17} into \eqref{4.16} and taking sup-norm with respect to $x_0\in\R^3$ and $t$ over $[\delta t_k,t_k]$ give
	 \begin{align}
	 &\sup_{\delta t_k\leq s\leq t_k,x_0\in\R^3}\sum_{x_i\in \mathcal{I}_{x_0}}C\left(R\int_{B_{\frac{R}{4}}(x_i)}|\D v^k(t)|^2+|\D^2 u^k(t)|^2dx\right)^\frac{3}{2}\\
	 &\leq CL\varepsilon_0^2t_k^\frac{3}{2}R^{-3} \sup_{\delta t_k\leq s\leq t_k,x_0\in\R^3}\sum_{x_i\in\mathcal{I}_{x_0}}\left(R\int_{B_{\frac{R}{4}}(x_i)}|\D v^k|^2+|\D^2u^k|^2dx\right)^\frac{3}{2}+\frac{\varepsilon_1^3}{4}\nonumber\\
	 &\leq C_2\varepsilon_0^2 \sup_{\delta t_k\leq s\leq t_k,x_0\in\R^3}\sum_{x_i\in\mathcal{I}_{x_0}}\left(R\int_{B_{\frac{R}{4}}(x_i)}|\D v^k|^2+|\D^2u^k|^2dx\right)^\frac{3}{2}+\frac{\varepsilon_1^3}{4}.\nonumber
	 \end{align}
	 Choosing  $\varepsilon_0$ small enough such that $C_2\varepsilon_0^2\leq\frac{C}{2}$, one has
	 \begin{equation}\label{4.18}
	 \sup_{\delta t_k\leq s\leq t_k, x_0\in\R^3}\sum_{x_i\in \mathcal{I}_{x_0}}C\left(R\int_{B_{\frac{R}{4}}(x_i)}|\D v^k(t)|^2+|\D^2 u^k(t)|^2dx\right)^\frac{3}{2}\leq\frac{\varepsilon_1^3}{2},
	 \end{equation}
	which together with \eqref{4.9} and \eqref{4.4} implies that
	\begin{align*}
	\sup_{x_0\in\R^3}\int_{B_{R_0}(x_0)}|v^k(t_k)|^3+|\D u^k(t_k)|^3dx\leq\frac{3}{4}\varepsilon_1^3.
	\end{align*}
	This contradicts to the assumption that $t_k$ is the maximal time satisfying \eqref{assumption}.
	
	Therefore, there exists a uniform time $T_0=\sigma R_0^2$  and $C=2C_1^\frac{1}{3}$ such that
	\begin{align}\label{3.1}
	\sup_{0\leq t\leq T_0}\|(v^k,\D u^k)\|_{L^3_{R_0}(\R^3)}\leq\varepsilon_1= C\varepsilon_0.
	\end{align}
	Then, by Lemma 3.1, one has, for any $\tau>0$,
		\begin{align}\label{3.5}
		\|(v^k,u^k)\|_{C^m(B_R)\times[\tau,T_0]}\leq C(m,\varepsilon_1,\tau,T_0,R),\quad \text{for any}\quad m\geq 0,R>0.
		\end{align}
	After taking subsequences, \eqref{3.1} and \eqref{3.5} yield that there exist $(v,u)\in C^\infty(\R^3\times(0,T_0],\R^3\times S^2)$ with $(v,\D u)\in L^\infty([0,T_0];L_{uloc}^3(\R^3))$ such that
	\begin{align*}
	&(v^k,\D u^k)\rightharpoonup (v,\D u) \,\,\text{in}\,\, L^3_{loc}(\R^3\times[0,T_0]);\\
	&(v^k,\D u^k)\rightarrow (v,\D u) \,\,\text{in}\,\, C^m(B_R\times[\delta,T_0]),\,\,\forall m\geq 0,R>0,\delta<T_0.
	\end{align*}
	Letting $k\rightarrow +\infty$ in \eqref{3.1} gives
	$$\sup_{0\leq t\leq T_0}\|(v,\D u)\|_{L^3_{R_0}(\R^3)}\leq C\varepsilon_0.$$
	Then, it is easy to check by testing \eqref{O-F-1} and \eqref{O-F-3} by $\varphi$ and $\psi$ in $L^3(0,T_0;W^{1,3}_0(B_R))$ with $\D\cdot\varphi=0$ respectively that
	\begin{equation}\label{3.4}
	\|(\p_t v^k,\p_t u^k)\|_{L^\frac{3}{2}(0,T_0; W^{-1,\frac{3}{2}}(B_R))}\leq C(R),\quad\text{for any}\quad R>0.
	\end{equation}
	Therefore, we have
	$$(v(t),\D u(t))\rightarrow (v_0,\D u_0),\,\,\text{in}\,\,L_{loc}^3(\R^3),\,\,\text{as } t\rightarrow 0,$$
	which implies that $(v,\D u)\in C_{\ast}([0,T_0],L_{uloc}^3(\R^3))$.

\

Now, we prove the characterization of the maximal time $T^\ast$ stated in Theorem \ref{thm1} $(iii)$. Suppose that $T^\ast<+\infty$ and $(iii)$ could not hold true. Then, there exists a $R_\ast$ such that
$$\limsup_{t\rightarrow T^\ast}\|v(t),\D u(t))\|_{L^3_{R_\ast}(\R^3)}\leq \varepsilon_0.$$
In particular, there exists $R_{\star}\in(0,R_\ast]$ such that
$$\sup_{T^\ast-R_\star^2\leq t\leq T^\ast}\|(v(t),\D u(t))\|_{L^3_{R_\star}(\R^3)}\leq \varepsilon_0.$$
Noting that for any $t>0$, $\lim\limits_{|x|\rightarrow\infty}\fint_{B_1(x)}|u^k(y,t)-b|^2dy=0$ and for any $\tau>0$, $u^k\rightarrow u$ in $C^\infty(\R^3\times[\tau,T_0])$, it is obvious that, for any $t>0$,
$\lim\limits_{|x|\rightarrow\infty}\fint_{B_1(x)}|u(y,t)-b|^2dy=0$.
Therefore, by the local existence and high regularity results of Theorem \ref{thm1}, we have $(v,u)\in C^\infty(\R^3\times(0,T^\ast))\cap C_{\ast}([0,T_0],L_{uloc}^3(\R^3))$. Hence, we can extend the solution beyond $T^\ast$, which contradicts to the maximality of $T^\ast$.
\end{proof}

\begin{proof}[\bf Proof of Corollary \ref{cor}:]
For the initial data $(v_0,u_0)$ satisfying the assumption in Corollary \ref{cor}, we have, for any $0<R<\infty$,
$$\sup_{y\in\R^3}\int_{B_R(y)}|v_0|^3+|\D u_0|^3dx\leq \varepsilon_0^3.$$
Then applying Theorem \ref{thm1}, the existence time of solution is at least $\sigma R^2$ which is arbitrarily large. Therefore, we have proved the global existence of solutions with small $L^3$ data of $(v_0,\D u_0)$.
\end{proof}

\section{\bf Uniqueness}
In this section, we prove the uniqueness of $L^3_{uloc}$- solutions to \eqref{O-F-1}-\eqref{O-F-3} with finite initial energy.  Before proving the uniqueness theorem, we show that solutions in Theorem \ref{thm1} have the following a priori estimates under the additional assumption $(v_0,u_0)\in L^2(\R^3;\R^3)\times H_b^1(\R^3,S^2)$.
\begin{lemma}\label{lem5.1}
	Let $(v,u)$ be a solution to \eqref{O-F-1}-\eqref{O-F-3} with initial data $(v_0,u_0)$ satisfying \eqref{Ini} in $\R^3\times(0,T_0)$. If $(v_0,u_0)$ is also in $ L^2(\R^3;\R^3)\times H_b^1(\R^3,S^2)$, then, for any $t\in(0,T_0)$,
	\begin{equation}\label{basic energy}
	\int_{\mathbb R^3}e(u(\cdot,t),v(\cdot,t))dx+\int_{0}^{t}\int_{\mathbb R^3}(|\nabla v|^2+|\partial_tu+(v\cdot\nabla u)|^2dx=\int_{\mathbb{R}^3}e(u_0,v_0)dx,
	\end{equation}
	where $e(u,v)$ is the basic energy defined by $e(u,v)=W(u,\nabla u)+\frac{1}{2}|v|^2$.
\end{lemma}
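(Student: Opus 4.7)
The plan is to first derive the identity for the smooth approximate solutions $(v^k,u^k)$ used in the proof of Theorem \ref{thm1}, and then pass to the limit. Since $(v_0,u_0)\in L^2\times H_b^1$, by refining the approximation in Lemma \ref{appro.-lem} we may arrange $(v_0^k,u_0^k)\to(v_0,u_0)$ also in $L^2\times H_b^1$. The approximate solutions constructed in \cite{HLX,WW} are smooth and, for each $k$, globally integrable enough for the following formal computation to be rigorous.

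The core computation is the standard Ericksen--Leslie energy law. Testing \eqref{O-F-1} with $v^i$, using $\nabla\cdot v=0$ to eliminate the pressure and the convection term, gives
\begin{equation*}
\tfrac{1}{2}\tfrac{d}{dt}\!\int_{\R^3}|v|^2\,dx+\int_{\R^3}|\nabla v|^2\,dx
=\int_{\R^3}\nabla_i u^k\,W_{p_j^k}\,\nabla_j v^i\,dx.
\end{equation*}
For the director equation, I introduce the material derivative $\dot u:=\partial_t u+(v\cdot\nabla)u$. A direct expansion of \eqref{O-F-3} shows
\begin{equation*}
\dot u^i=(\delta^{ij}-u^iu^j)\bigl(\nabla_\alpha W_{p_\alpha^j}-W_{u^j}\bigr),
\end{equation*}
where the cancellations come from grouping the four terms with factor $u^i$ and the two terms $W_{p_\alpha^k}u^k\nabla_\alpha u^i$ with $-u^k\nabla_\alpha u^i W_{p_\alpha^k}$ (produced by expanding $\nabla_\alpha(u^ku^iW_{p_\alpha^k})$). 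Using $|u|=1$, hence $u\cdot\dot u=0$, I obtain
\begin{equation*}
\int_{\R^3}|\dot u|^2\,dx=\int_{\R^3}\bigl(\nabla_\alpha W_{p_\alpha^i}-W_{u^i}\bigr)\dot u^i\,dx.
\end{equation*}

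Now I expand the right-hand side. For the $\partial_t u^i$ part, integration by parts yields $-\tfrac{d}{dt}\int W(u,\nabla u)\,dx$. For the $(v\cdot\nabla)u^i$ part, integration by parts in $\nabla_\alpha W_{p_\alpha^i}$, combined with $\nabla\cdot v=0$ and the chain rule $W_{p_\alpha^i}\nabla_\alpha v^j\nabla_j u^i+W_{p_\alpha^i}v^j\nabla^2_{\alpha j}u^i+W_{u^i}v^j\nabla_j u^i=\nabla_\alpha v^j\nabla_j u^i W_{p_\alpha^i}+v\cdot\nabla W$, produces exactly $-\int\nabla_j u^i W_{p_\alpha^i}\nabla_\alpha v^j\,dx$, which cancels the coupling term from the momentum identity. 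Adding the two identities gives
\begin{equation*}
\tfrac{d}{dt}\!\int_{\R^3}\!\bigl[\tfrac12|v|^2+W(u,\nabla u)\bigr]dx+\int_{\R^3}\!\bigl(|\nabla v|^2+|\dot u|^2\bigr)dx=0.
\end{equation*}
Integrating from $0$ to $t$ yields \eqref{basic energy} for $(v^k,u^k)$.

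Finally, I pass $k\to\infty$. For any $\tau\in(0,T_0)$, the convergence $(v^k,u^k)\to(v,u)$ in $C^m_{\loc}(\R^3\times[\tau,T_0])$ from Theorem \ref{thm1}, combined with a cutoff argument that exploits the uniform $L^2\times H^1$ control inherited from the $k$-level identity \eqref{basic energy} and the decay $u_0-b\in H^1$, lets me pass to the equality on $[\tau,T_0]$ for $(v,u)$. The main obstacle is the passage $\tau\to 0^+$: equality at $t=0$ requires both sides to converge. The right-hand side is constant, so I must show $\int e(u(\tau),v(\tau))\,dx\to\int e(u_0,v_0)\,dx$. This I get by combining weak lower semicontinuity (which gives $\liminf\ge\int e(u_0,v_0)\,dx$ after $\tau\to 0$ and $k\to\infty$ in the right order) with the reverse inequality, which follows from the approximate identity and strong $L^2_{\loc}\times H^1_{\loc}$ continuity at $t=0$ plus a tightness estimate at spatial infinity derived from the global $L^2\times H^1$ bound uniform in $k$. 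This reverse bound, i.e.\ controlling the energy tails uniformly, is the step requiring the most care.
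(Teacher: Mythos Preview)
Your core computation is correct and is exactly the approach the paper takes: multiply \eqref{O-F-1} by $v^i$ and \eqref{O-F-3} by $\partial_t u^i+v\cdot\nabla u^i$, sum, and integrate over $\R^3$; the paper states this in a single sentence and refers to Lemma~3.1 of \cite{HX} for details. Your additional care with the approximation $(v^k,u^k)$ and the passage $k\to\infty$, $\tau\to 0$ goes beyond what the paper itself provides, but the underlying derivation is the same.
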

\begin{proof}
	The equality \eqref{basic energy} follows from multiplying \eqref{O-F-1} and \eqref{O-F-3} by $v^i$ and $\partial_tu^i+v\cdot\nabla u^i$ respectively, summing the resulting equations up and integrating over $\mathbb R^3$. We omit details here, one can refer to Lemma 3.1 of \cite{HX}.
	\end{proof}

\begin{lemma}\label{lem5.2}
Let $(v,u)$ be a solution to \eqref{O-F-1}-\eqref{O-F-3} with initial data $(v_0,u_0)$ satisfying \eqref{Ini} in $\R^3\times(0,T_0)$. If $(v_0,u_0)$ is also in $ L^2(\R^3;\R^3)\times H_b^1(\R^3,S^2)$, then
	\begin{align}
	&\int_{0}^{T_0}\int_{\mathbb{R}^3}|\nabla^2u|^2+|\nabla v|^2dxdt\leq C(1+T_0R_0^{-2})\int_{\mathbb{R}^3}e(u_0,v_0)dx,\label{5.2}\\
	&\int_{0}^{T_0}\int_{\mathbb{R}^3}|\nabla u|^4+|v|^4dxdt\leq C\varepsilon_1^2(1+T_0R_0^{-2})\int_{\mathbb{R}^3}e(u_0,v_0)dx.\label{5.3}
	\end{align}
\end{lemma}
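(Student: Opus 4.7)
The plan is to derive \eqref{5.2} by summing the local energy inequality \eqref{loc-est1} over a locally finite covering of $\R^3$, and then obtain \eqref{5.3} from \eqref{5.2} by a localized Sobolev-interpolation argument exploiting the $L^3_{uloc}$-smallness $\|(v,\D u)(t)\|_{L^3_{R_0}}\leq C\varepsilon_0\leq\varepsilon_1$ supplied by Theorem~\ref{thm1}. Fix a family $\{x_i\}\subset\R^3$ so that $\{B_{R_0/2}(x_i)\}$ covers $\R^3$ with uniformly bounded overlap, and choose cutoffs $\phi_i\in C_0^\infty(B_{R_0}(x_i))$ with $\phi_i\equiv1$ on $B_{R_0/2}(x_i)$, $|\D\phi_i|\leq C/R_0$, and $1\leq\sum_i\phi_i^2\leq K$ for a universal $K$. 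From Lemma~\ref{lem5.1} and the ellipticity $W(u,\D u)\geq a|\D u|^2$ I will use $\sup_{0\leq t\leq T_0}\|(v,\D u)(t)\|_{L^2(\R^3)}^2\leq C\!\int\!e(u_0,v_0)dx$ and $\int_0^{T_0}\!\|\D v\|_{L^2}^2\,dt\leq C\!\int\!e(u_0,v_0)dx$. Upon summing \eqref{loc-est1} over $i$, the initial-data contribution is $\leq K\|(v_0,\D u_0)\|_{L^2}^2\leq C\!\int\!e(u_0,v_0)$ and the $|\D\phi_i|^2$ contribution is $\leq CT_0R_0^{-2}\!\int\!e(u_0,v_0)$.

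\textbf{Localized $L^4$ bound and pressure.} On each ball $B_{R_0}(x_i)$, H\"older with exponents $(3/2,3)$ together with the 3D Sobolev embedding applied to $f\phi_i$ (where $f\in\{|v|,|\D u|\}$) yields
\begin{equation*}
\int(f\phi_i)^4dx\leq\|f\phi_i\|_{L^3}^2\|f\phi_i\|_{L^6}^2\leq C\varepsilon_1^2\!\int_{\R^3}\!\bigl(|\D f|^2\phi_i^2+R_0^{-2}f^2\phi_i^2\bigr)dx,
\end{equation*}
so that summing and integrating in time delivers the interpolation estimate
\begin{equation}\label{goal-L4}
\int_0^{T_0}\!\!\int_{\R^3}(|v|^4+|\D u|^4)\,dxdt\leq C\varepsilon_1^2\!\int_0^{T_0}\!\!\int_{\R^3}(|\D v|^2+|\D^2u|^2)\,dxdt+C\varepsilon_1^2\tfrac{T_0}{R_0^2}\!\int\!e(u_0,v_0)dx.
\end{equation}
The principal difficulty is the pressure term $\sum_i\!\int_0^{T_0}\!\int(p-c_i)v\cdot\D\phi_i\phi_i\,dxds$, because the per-ball bound \eqref{pres2} carries a $C\varepsilon_1^2 tR_0^{-2}\sup_{y,s}\!\int_{B_{R_0/2}(y)}(|v|^2+|\D u|^2)$ that does not decay with $|x_i|$, so na\"ively summing diverges. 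I will bypass this by representing the pressure globally as $p=\mathcal R_i\mathcal R_j F^{ij}$, with $F^{ij}=v^iv^j+\D_iu^kW_{p_j^k}$ and $p$ adjusted by a function of $t$ alone (so that \eqref{O-F-1} is unaffected). Under the finite-energy hypothesis \eqref{Ini-3} the Calder\'on--Zygmund estimate then gives $\|p(\cdot,s)\|_{L^2(\R^3)}\leq C\||v|^2+|\D u|^2\|_{L^2(\R^3)}$, so that $\|p\|_{L^2_{t,x}}^2\leq C\!\int_0^{T_0}\!\int(|v|^4+|\D u|^4)\,dxds$. Since $\bigl|\sum_i\D\phi_i\phi_i\bigr|\leq CK/R_0$, the summed pressure term is controlled by
\begin{equation*}
\frac{C}{R_0}\!\int_0^{T_0}\!\!\int|p||v|\,dxds\leq\frac{C}{R_0}\|p\|_{L^2_{t,x}}\|v\|_{L^2_{t,x}}\leq\eta\|p\|_{L^2_{t,x}}^2+\frac{CT_0}{\eta R_0^2}\!\int\!e(u_0,v_0)dx,
\end{equation*}
and \eqref{goal-L4} dominates the first term by $C\eta\varepsilon_1^2\!\int_0^{T_0}\!\int(|\D v|^2+|\D^2u|^2)+C\eta\varepsilon_1^2\tfrac{T_0}{R_0^2}\!\int\!e(u_0,v_0)$.

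\textbf{Closing the estimate.} Collecting every contribution in the summed \eqref{loc-est1}, the cubic and pressure terms generate $C\varepsilon_1^2\!\int_0^{T_0}\!\int(|\D v|^2+|\D^2u|^2)$ plus multiples of $(1+T_0R_0^{-2})\!\int\!e(u_0,v_0)$; choosing $\varepsilon_1$ and $\eta$ small enough to absorb the former into the left-hand side produces \eqref{5.2}, and substituting \eqref{5.2} back into \eqref{goal-L4} yields \eqref{5.3}. To justify the use of $p=\mathcal R_i\mathcal R_j F^{ij}$ rigorously, I will first carry out the argument at the level of the smooth approximations $(v^k,u^k)$ from the proof of Theorem~\ref{thm1}, whose pressures lie in $L^2_x$ for each $t$, and then pass to the limit via lower semicontinuity. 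The hardest step is exactly this global pressure handling: the per-ball Calder\'on--Zygmund/commutator decomposition of Lemma~\ref{key-lemma3} was designed for the $L^3_{uloc}$ framework and does not sum over $\R^3$, so the finite-energy hypothesis \eqref{Ini-3} is precisely what makes the cleaner Riesz representation available.
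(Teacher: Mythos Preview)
Your argument is correct, but it is considerably more elaborate than what is needed. The paper's proof bypasses the pressure issue entirely by the following observation: since the finite-energy hypothesis \eqref{Ini-3} places $(v,\D u)$ in $L^\infty_tL^2_x$ globally, one may simply repeat the computation of Lemma~\ref{key-lem1} with $\phi\equiv1$. Then every boundary term vanishes, and in particular the pressure contribution $\int(p-c)v\cdot\D\phi\,\phi\,dx$ is identically zero. One is left with
\[
\frac{d}{dt}\int_{\R^3}(|v|^2+|\D u|^2)\,dx+\int_{\R^3}(|\D v|^2+a|\D^2u|^2)\,dx\leq C\int_{\R^3}(|v|^4+|\D u|^4)\,dx,
\]
and the same localized $L^4$ interpolation you wrote (your display \eqref{goal-L4}, which is exactly the paper's covering estimate \eqref{5.5}) closes the inequality after absorbing $C\varepsilon_1^2\int(|\D v|^2+|\D^2u|^2)$ on the left. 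Your route---summing \eqref{loc-est1} over a partition of unity and then invoking the global Riesz representation $p=\mathcal R_i\mathcal R_jF^{ij}\in L^2_x$ to handle $\sum_i\int(p-c)v\cdot\D\phi_i\phi_i$---recovers the same conclusion, and your recognition that the per-ball pressure decomposition of Lemma~\ref{key-lemma3} does not sum is exactly right. But the moral is simpler: the finite-energy assumption is what licenses testing globally, not merely what makes $p\in L^2_x$; once you test globally the pressure never appears. Your approach would be the natural one if one wanted a version of the lemma under weaker decay hypotheses on $(v_0,u_0)$.
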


\begin{proof}
	Multiplying \eqref{O-F-1}, \eqref{O-F-3} with $v^i$, $\Delta u^i$ respectively  and using the similar argument in Lemma \ref{key-lem1} (with $\phi\equiv 1$), one has
	\begin{align}\label{5.4}
	&\frac{d}{dt}\int_{\mathbb R^3}|v|^2+|\nabla u|^2dx+2\int_{\mathbb R^3}(|\D v|^2+a|\nabla^2 u|^2)dx\\
	&\leq\int_{\mathbb R^3}(|\D v|^2+a|\nabla^2 u|^2)dx+C\int_{\mathbb R^3}(|\nabla u|^4+|v|^4)dx.\nonumber
	\end{align}
	By a standard covering argument in $\mathbb R^3$, it follows from H\"{o}lder's inequality and \eqref{inter} that
	\begin{align}\label{5.5}
	&\int_{\mathbb R^3}(|\nabla u|^4+|v|^4)dx\leq C\sum_{i}\int_{B_{R_0}(x_i)}(|\nabla u|^4+|v|^4)dx\nonumber\\
	&\leq C\sum_{i}\left(\int_{B_{R_0}(x_i)}|\nabla u|^3+|v|^3\right)^{\frac{2}{3}}\left(\int_{B_{R_0}(x_i)}|\nabla u|^6+|v|^6\right)^\frac{1}{3}\\
	&\leq C\varepsilon_1^2\sum_{i}\left(\int_{B_{R_0}(x_i)}|\nabla^2u|^2+|\nabla v|^2dx+\frac{1}{R_0^2}\int_{B_{R_0}(x_i)}|\nabla u|^2+|v|^2dx\right)\nonumber\\
	&\leq C\varepsilon_1^2(\int_{\mathbb{R}^3}|\nabla ^2u|^2+|\nabla v|^2dx+\frac{1}{R_0^2}\int_{\mathbb{R}^3}|\nabla u|^2+|v|^2dx),\nonumber
	\end{align}
	where we have used that
	\begin{equation*}
	\esssup_{0\leq s\leq T_0,y\in\mathbb R^3}\int_{B_{R_0}(y)}|\nabla u(x,s)|^3+|v(x,s)|^3dx< \varepsilon_1^3.
	\end{equation*}
	Substituting \eqref{5.4} into \eqref{5.5}, integrating over time in $(0,T_0)$ and using \eqref{basic energy}, we prove \eqref{5.2} and \eqref{5.3} by choosing $\varepsilon_1$ small enough.
\end{proof}

 \begin{proof}[\bf Proof of Theorem \ref{thm2}]
 Let $(v_1,u_1)$ and $(v_2,u_2)$ be two solutions of \eqref{O-F-1}-\eqref{O-F-3} with the same initial data satisfying assumptions in Theorem \ref{thm2}. It follows from Theorem \ref{thm1} and Lemmas \ref{lem5.1}, \ref{lem5.2} that
 \begin{equation*}
 \sup_{0\leq t\leq T_0}\|(v_m,\D u_m)\|_{L^3_{R_0}(\R^3)}\leq C\varepsilon_0
 \end{equation*}
 and
 \begin{equation*}
 (v_m,\D u_m)\in L^\infty(0,T_0; L^2(\R^3))\cap L^2(0,T_0; H_b^1(\R^3))\cap L^4(\R^3\times(0,T_0)).
 \end{equation*}
 Then, by testing \eqref{O-F-1} with divergence free vector $\psi$ in $L^2(0,T_0;H^1)$, one has $\p_t v_m\in L^2(0,T_0;H^{-1})$. Define the vector field $\xi_m=(-\Delta+I)^{-1}v_m$, for $m=1,2$, that is, $\xi_m$ is the unique solution to
 \begin{equation}
 -\Delta\xi_m+\xi_m=u_m,\quad\xi_m\rightarrow 0,\quad\text{as}\,x\rightarrow \infty.
 \end{equation}
 It is clear that $\text{div}\,\xi_m=0$, $\xi_m\in L^2(0,T_0;H^3)$ and $\p_t\xi_m\in L^2(0,T_0;H^1).$

 Set $\xi=\xi_1-\xi_2$ and $v=v_1-v_2$. Then, $\p_t\xi$ satisfies
 \begin{align*}
 \p_t\xi&=\p_t(-\Delta+I)^{-1}v\nonumber\\
 &=(-\Delta+I)^{-1}[\text{div}(\D v-v_1\otimes v_1+v_2\otimes v_2-(\D u_1)^TW_p(u_1,\D u_1)\nonumber\\
 &\quad+(\D u_2)^TW_p(u_2,\D u_2))-\D p].
 \end{align*}
Therefore, it follows from integration by parts that
\begin{align}\label{5.7}
&\frac{1}{2}\frac{d}{dt}\int_{\R^3}(|\xi|^2+|\D\xi|^2)dx=\int_{\R^3}(\xi-\Delta\xi)\cdot\p_t\xi dx\nonumber\\
&=\int_{\R^3}(-\Delta+I)\xi\cdot (-\Delta+I)^{-1}[\text{div}(\D v-v_1\otimes v_1+v_2\otimes v_2\nonumber\\
&\quad-(\D u_1)^TW_p(u_1,\D u_1)+(\D u_2)^TW_p(u_2,\D u_2))-\D p]dx\\
&=\int_{\R^3}\xi\cdot[\text{div}(\D v-v_1\otimes v_1+v_2\otimes v_2-(\D u_1)^TW_p(u_1,\D u_1)\nonumber\\
&\quad+(\D u_2)^TW_p(u_2,\D u_2))-\D p]dx\nonumber\\
&=\int_{\R^3}(-\D v+v\otimes v_1+v_2\otimes v):\D\xi dx\nonumber\\
&\quad+\int_{\R^3}((\D u_1)^TW_p(u_1,\D u_1)-(\D u_2)^TW_p(u_2,\D u_2))):\D\xi dx.\nonumber
\end{align}
Since $v=-\Delta\xi+\xi$, one has
\begin{align}\label{5.8}
-\int_{\R^3}\D v:\D \xi dx&=\int_{\R^3}(-\D\xi+\D\Delta\xi):\D\xi dx=-\int_{\R^3}(|\D\xi|^2+|\D^2\xi|^2)dx.
\end{align}
It follows from Young's inequality that
\begin{align}\label{5.9}
&\int_{\R^3}(v\otimes v_1+v_2\otimes v):\D\xi dx\leq \int_{\R^3}(|v_1|+|v_2|)(|\xi|+|\Delta\xi|)|\D\xi|dx\\
&\leq\eta\int_{\R^3}|\D^2\xi|^2dx+C_\eta\int_{\R^3}(1+|v_1|^2+|v_2|^2)(|\xi|^2+|\D\xi|^2)dx.\nonumber
\end{align}
Noting that $W(u,p)$ is quadratic in $u$ and $p$, it is obvious that
\begin{align*}
&(\D u_1)^TW_p(u_1,\D u_1)-(\D u_2)^TW_p(u_2,\D u_2)\\
&\leq C(|\D u_1|^2|w|+(|\D u_1|+|\D u_2|)|\D w|).
\end{align*}
Thus, we obtain
\begin{align}\label{5.10}
&\int_{\R^3}((\D u_1)^TW_p(u_1,\D u_1)-(\D u_2)^TW_p(u_2,\D u_2))):\D\xi dx\nonumber\\
&\leq C\int_{\R^3}(|\D u_1|^2|w|+(|\D u_1|+|\D u_2|)|\D w|)|\D\xi|dx\\
&\leq \eta\int_{\R^3}|\D w|^2dx+C_\eta\int_{\R^3}(|\D u_1|^2|w|^2+(|\D u_1|^2+|\D u_2|^2)|\D\xi|^2)dx.\nonumber
\end{align}
Substituting \eqref{5.8}-\eqref{5.10} into \eqref{5.7} gives
\begin{align}\label{5.11}
&\frac{1}{2}\frac{d}{dt}\int_{\R^3}(|\xi|^2+|\D\xi|^2)dx+\int_{\R^3}(|\D\xi|^2+|\D^2\xi|^2)dx\nonumber\\
&\leq\eta\int_{\R^3}(|\D^2\xi|^2+|\D w|^2)dx+C_\eta\int_{\R^3}|\D u_1|^2|w|^2dx\\
&\quad+C_\eta\int_{\R^3}(1+|v_1|^2+|v_2|^2+|\D u_1|^2+|\D u_2|^2)(|\xi|^2+|\D\xi|^2)dx.\nonumber
\end{align}

On the other hand, set $w=u_1-u_2$. Then, it follows from \eqref{O-F-3} that $w$ satisfies
\begin{align}\label{5.12}
\p_t w^i&=-v\cdot\D u_1^i-v_2\cdot\D w^i+\D_{\alpha}[W_{p_\alpha^i}(u_1,\D u_1)-W_{p_\alpha^i}(u_2,\D u_2)]\nonumber\\
&\quad-\D_{\alpha}[u_1^ku_1^iW_{p_\alpha^k}(u_1,\D u_1)-u_2^ku_2^iW_{p_\alpha^k}(u_2,\D u_2)]-W_{u^i}(u_1,\D u_1)\nonumber\\
&\quad+W_{u^i}(u_2,\D u_2)+u^i_1u^k_1W_{u^k}(u_1,\D u_1)-u^i_2u^k_2W_{u^k}(u_2,\D u_2)\\
&\quad+W_{p_\alpha^k}(u_1,\D u_1)\D_\alpha u^k_1u^i_1-W_{p_\alpha^k}(u_2,\D u_2)\D_\alpha u^k_2u^i_2\nonumber\\
&\quad+W_{p_\alpha^k}(u_1,\D u_1)u^k_1\D_\alpha u^i_1-W_{p_\alpha^k}(u_2,\D u_2)u^k_2\D_\alpha u^i_2.\nonumber
\end{align}
Multiplying \eqref{5.12} by $w^i$ and integrating over $\R^3$ give
\begin{align}\label{5.13}
&\frac{1}{2}\frac{d}{dt}\int_{\R^3}|w|^2dx+\int_{\R^3}\left(W_{p_\alpha^i}(u_1,\D u_1)-W_{p_\alpha^i}(u_2,\D u_2)\right)\D_{\alpha}w^idx\nonumber\\
&=\int_{\R^3}\left(u_1^ku_1^iW_{p_\alpha^k}(u_1,\D u_1)-u_2^ku_2^iW_{p_\alpha^k}(u_2,\D u_2)\right)\D_{\alpha}w^idx\nonumber\\
&\quad-\int_{\R^3}(W_{u^i}(u_1,\D u_1)-W_{u^i}(u_2,\D u_2))w^idx\nonumber\\
&\quad+\int_{\R^3}(u^i_1u^k_1W_{u^k}(u_1,\D u_1)-u^i_2u^k_2W_{u^k}(u_2,\D u_2))w^idx\\
&\quad+\int_{\R^3}(W_{p_\alpha^k}(u_1,\D u_1)\D_\alpha u^k_1u^i_1-W_{p_\alpha^k}(u_2,\D u_2)\D_\alpha u^k_2u^i_2)w^idx\nonumber\\
&\quad+\int_{\R^3}(W_{p_\alpha^k}(u_1,\D u_1)u^k_1\D_\alpha u^i_1-W_{p_\alpha^k}(u_2,\D u_2)u^k_2\D_\alpha u^i_2)w^idx\nonumber\\
&\quad-\int_{\R^3}(v\cdot\D u_1^i+v_2\cdot\D w^i)w^idx\nonumber\\
&=:I_1+I_2+I_3+I_4+I_5+I_6.\nonumber
\end{align}
Since $W(u,\D u):=a_{\alpha\beta}^{ij}(u)\D_{\alpha}u^i\D_{\beta}u^j$ is convex, we have
\begin{equation*}
W_{p_\alpha^i}(u,\D u)=a_{\alpha\beta}^{ij}(u)\D_{\beta}u^j,\quad a_{\alpha\beta}^{ij}(u)\lambda_\alpha^i\lambda_\beta^j\geq a|\lambda|^2,\, \forall \lambda\in \mathbb{M}^{3\times 3},
\end{equation*}
where $a_{\alpha\beta}^{ij}(u)$ is a quadratic polynomial of $u$. Then, we have
\begin{align}\label{5.14}
&\int_{\R^3}\left(W_{p_\alpha^i}(u_1,\D u_1)-W_{p_\alpha^i}(u_2,\D u_2)\right)\D_{\alpha}w^idx\nonumber\\
&=\int_{\R^3}\left((a_{\alpha\beta}^{ij}(u_1)-a_{\alpha\beta}^{ij}(u_2))\D_{\alpha}u_1^j+a_{\alpha\beta}^{ij}(u_2)\D_{\beta}w^j\right)\D_{\alpha}w^idx\\
&\geq a\int_{\R^3}|\D w|^2dx-C\int_{\R^3}|\D u_1||w||\D w|dx,\nonumber
\end{align}
where we have used $|a_{\alpha\beta}^{ij}(u_1)-a_{\alpha\beta}^{ij}(u_2)|\leq C|w|$.
Now we estimate the terms $I_1, \cdots, I_6$ on the right hand side of \eqref{5.13}. Noting that $W(u,\D u)$ is quadratic in $u$ and $\D u$, it follows from tedious but not difficult calculations that
\begin{align}\label{5.15}
\sum_{i=2}^{5}I_i\leq C\int_{\R^3}(|\D u_1|^2|w|+(|\D u_1|+|\D u_2|)|\D w|)|w|dx.
\end{align}
Since $v=-\Delta\xi+\xi$, one has
\begin{equation}\label{5.16}
I_6\leq \int_{\R^3}(|\D u_1|(|\xi|+|\Delta\xi|)+|v_2||\D w|)|w|dx.
\end{equation}
To estimate $I_1$, we first use the rotation invariant property, which will be checked in   Lemma \ref{lem-appen} below, to write the integrand in $I_1$ as
\begin{align}\label{5.17}
&\left(u_1^k(x)u_1^i(x)W_{p_\alpha^k}(u_1,\D_x u_1)-u_2^k(x)u_2^i(x)W_{p_\alpha^k}(u_2,\D_x u_2)\right)\D_{x_\alpha}w^i(x)\\
&=\left(\tilde{u}_1^k(y)\tilde{u}_1^i(y)W_{\tilde{p}_\alpha^k}(\tilde{u}_1,\D_y \tilde{u}_1)-\tilde{u}_2^k(y)\tilde{u}_2^i(y)W_{p_\alpha^k}(\tilde{u}_2,\D_y \tilde{u}_2)\right)\D_{y_\alpha}\tilde{w}^i(y),\nonumber
\end{align}
where $x=Q^Ty$ with $Q\in O(3)$  and $\tilde{u}_m(y)=Qu_m(Q^Ty)$ for $m=1,2$.

 At $x=x_0$, there is  a rotation $Q=Q_{x_0}$ such that $\tilde{u}_2(y_0)=(0,0,1)^T$ with $y_0=Q x_0$.
 Then, for $k=1, 2$, at $y=y_0$, we obtain
\begin{align}\label{5.18}
&\left(\tilde{u}_1^k\tilde{u}_1^iW_{\tilde{p}_\alpha^k}(\tilde{u}_1,\D_y \tilde{u}_1)-\tilde{u}_2^k\tilde{u}_2^iW_{p_\alpha^k}(\tilde{u}_2,\D_y \tilde{u}_2)\right)\D_{y_\alpha}\tilde{w}^i\nonumber\\
&=\tilde{u}_1^k\tilde{u}_1^iW_{\tilde{p}_\alpha^k}(\tilde{u}_1,\D_y \tilde{u}_1)\D_{y_\alpha}\tilde{w}^i=\tilde{w}^k\tilde{u}_1^iW_{\tilde{p}_\alpha^k}(\tilde{u}_1,\D_y \tilde{u}_1)\D_{y_\alpha}\tilde{w}^i\\
&\leq C|\tilde{w}||\D_y\tilde{u}_1||\D_y\tilde{w}|.\nonumber
\end{align}
Similarly, for $i=1$ or $2$, it also holds that at $y=y_0$
\begin{align}\label{5.19}
\left(\tilde{u}_1^k\tilde{u}_1^iW_{\tilde{p}_\alpha^k}(\tilde{u}_1,\D_y \tilde{u}_1)-\tilde{u}_2^k\tilde{u}_2^iW_{p_\alpha^k}(\tilde{u}_2,\D_y \tilde{u}_2)\right)\D_{y_\alpha}\tilde{w}^i\leq C|\tilde{w}||\D_y\tilde{u}_1||\D_y\tilde{w}|.
\end{align}
It suffices to estimate the integrand in $I_1$ for $k=i=3$. Indeed, since $\tilde{u}_2(y_0)=(0,0,1)^T$ and $|\tilde{u}_2|=1$ implies $\D_{_{y_\alpha}}\tilde{u}_2^3=0$ at $y=y_0$, we have that at $y=y_0$
\begin{align}\label{5.20}
&\left(\tilde{u}_1^3\tilde{u}_1^3W_{\tilde{p}_\alpha^3}(\tilde{u}_1,\D_y \tilde{u}_1)-\tilde{u}_2^3\tilde{u}_2^3W_{p_\alpha^3}(\tilde{u}_2,\D_y \tilde{u}_2)\right)\D_{y_\alpha}\tilde{w}^3\nonumber\\
&=\left(\tilde{w}^3\tilde{u}_1^3a_{\alpha\beta}^{ij}(\tilde{u}_1)\D_{y_\beta}\tilde{u}_1^j+\tilde{u}_2^3\tilde{w}^3a_{\alpha\beta}^{ij}(\tilde{u}_1)\D_{y_\beta}\tilde{u}_1^j\right)\D_{y_\alpha}\tilde{w}^3\\
&\quad+\tilde{u}_2^3\tilde{u}_2^3(a_{\alpha\beta}^{ij}(\tilde{u}_1)-a_{\alpha\beta}^{ij}(\tilde{u}_2))\D_{y_\beta}\tilde{u}_1^j\D_{y_\alpha}\tilde{w}^3+\tilde{u}_2^3\tilde{u}_2^3a_{\alpha\beta}^{ij}(\tilde{u}_2)\D_{y_\beta}\tilde{w}^j\D_{y_\alpha}\tilde{u}_1^3\nonumber\\
&\leq C|\D_y\tilde{u}_1||\tilde{w}||\D_y\tilde{w}|+\tilde{u}_2^3\tilde{u}_2^3a_{\alpha\beta}^{ij}(\tilde{u}_2)\D_{y_\beta}\tilde{w}^j\D_{y_\alpha}\tilde{u}_1^3.\nonumber
\end{align}
Since $\tilde{u}_2(y_0)=(0,0,1)^T$ and $|\tilde{u}_1|=1$ , it is clear that at $y=y_0$
\begin{align}\label{5.21}
&\tilde{u}_2^3\tilde{u}_2^3a_{\alpha\beta}^{ij}(\tilde{u}_2)\D_{y_\beta}\tilde{w}^j\D_{y_\alpha}\tilde{u}_1^3\nonumber\\
&=\tilde{u}_2^3\tilde{u}_2^3a_{\alpha\beta}^{ij}(\tilde{u}_2)\D_{y_\beta}\tilde{w}^j(\tilde{u}_1^3\D_{y_\alpha}\tilde{u}_1^3-\tilde{u}_1^3\D_{y_\alpha}\tilde{u}_1^3+\D_{y_\alpha}\tilde{u}_1^3)\nonumber\\
&=\tilde{u}_2^3\tilde{u}_2^3a_{\alpha\beta}^{ij}(\tilde{u}_2)\D_{y_\beta}\tilde{w}^j\tilde{u}_1^3\D_{y_\alpha}\tilde{u}_1^3-\tilde{u}_2^3\tilde{u}_2^3a_{\alpha\beta}^{ij}(\tilde{u}_2)\D_{y_\beta}\tilde{w}^j\tilde{w}^3\D_{y_\alpha}\tilde{u}_1^3\nonumber\\
&\leq-\tilde{u}_2^3\tilde{u}_2^3a_{\alpha\beta}^{ij}(\tilde{u}_2)\D_{y_\beta}\tilde{w}^j(\tilde{u}_1^1\D_{y_\alpha}\tilde{u}_1^1+\tilde{u}_1^2\D_{y_\alpha}\tilde{u}_1^2)+C|\D_y\tilde{u}_1||\tilde{w}||\D_y\tilde{w}|\\
&\leq -\tilde{u}_2^3\tilde{u}_2^3a_{\alpha\beta}^{ij}(\tilde{u}_2)\D_{y_\beta}\tilde{w}^j(\tilde{w}^1\D_{y_\alpha}\tilde{u}_1^1+\tilde{w}^2\D_{y_\alpha}\tilde{u}_1^2)+C|\D_y\tilde{u}_1||\tilde{w}||\D_y\tilde{w}|\nonumber\\
&\leq C|\D_y\tilde{u}_1||\tilde{w}||\D_y\tilde{w}|.\nonumber
\end{align}
Substituting \eqref{5.18}-\eqref{5.21} into \eqref{5.17}, we obtain that
\begin{align}\label{5.22}
&|\left(u_1^ku_1^iW_{p_\alpha^k}(u_1,\D u_1)-u_2^ku_2^iW_{p_\alpha^k}(u_2,\D u_2)\right)\D_{\alpha}w^i| (x_0) \\
&\leq C|\D_y\tilde{u}_1||\tilde{w}||\D_y\tilde{w}|(y_0)=C|\D u_1||w||\D w| (x_0).\nonumber
\end{align}
Since  \eqref{5.22} holds for every $x_0\in\R^3$, we have
\begin{align}\label{5.23}
I_1\leq C\int_{\R^3}|\D u_1||w||\D w|dx.
\end{align}
Substituting \eqref{5.14}-\eqref{5.16} and \eqref{5.23} into \eqref{5.13}, we have
\begin{align}\label{5.24}
&\frac{1}{2}\frac{d}{dt}\int_{\R^3}|w|^2dx+a\int_{\R^3}|\D w|^2dx\\
&\leq C\int_{\R^3}|\D u_1|^2|w|^2+(|v_2|+|\D u_1|+|\D u_2|)|w||\D w|+|\D u_1|(|\xi|+|\Delta\xi|)|w|dx\nonumber\\
&\leq \eta\int_{\R^3}(|\D^2\xi|^2+|\D w|^2)dx+C_\eta\int_{\R^3}(1+|v_2|^2+|\D u_1|^2+|\D u_2|^2)(|\xi|^2+|w|^2)dx.\nonumber
\end{align}
Summing \eqref{5.11} with \eqref{5.24} and choosing $\eta=\frac{1}{2}\min\{1,a\}$ give
\begin{align}\label{5.25}
&\frac{d}{dt}\int_{\R^3}(|\xi|^2+|\D\xi|^2+|w|^2)dx+\int_{\R^3}(|\D\xi|^2+|\D^2\xi|^2+a|\D w|^2)dx\\
&\leq C\int_{\R^3}(1+|v_1|^2+|v_2|^2+|\D u_1|^2+|\D u_2|^2)(|\xi|^2+|\D\xi|^2+|w|^2)dx.\nonumber
\end{align}
It follows from a covering argument and \eqref{inter} that
\begin{align}\label{5.26}
&\int_{\R^3}(|v_1|^2+|v_2|^2+|\D u_1|^2+|\D u_2|^2)(|\xi|^2+|\D\xi|^2+|w|^2)dx\\
&\leq C\sum_{i}\int_{B_{R_0}(x_i)}(|v_1|^2+|v_2|^2+|\D u_1|^2+|\D u_2|^2)(|\xi|^2+|\D\xi|^2+|w|^2)dx\nonumber\\
&\leq C\sum_{i}\left(\int_{B_{R_0}(x_i)}|v_1|^3+|v_2|^3+|\D u_1|^3+|\D u_2|^3\right)^\frac{2}{3}\left(\int_{B_{R_0}(x_i)}|\xi|^6+|\D\xi|^6+|w|^6\right)^\frac{1}{3}dx\nonumber\\
&\leq C\varepsilon_1^2\sum_{i}\left(\int_{B_{R_0}(x_i)}|\D\xi|^2+|\D^2\xi|^2+|\D w|^2dx+\frac{1}{R_0^2}\int_{B_{R_0}(x_i)}|\xi|^2+|\D\xi|^2+|w|^2dx\right)\nonumber\\
&\leq C\varepsilon_1^2\int_{\R^3}|\D\xi|^2+|\D^2\xi|^2+|\D w|^2dx+\frac{C\varepsilon_1^2}{R_0^2}\int_{\R^3}(|\xi|^2+|\D\xi|^2+|w|^2)dx.\nonumber
\end{align}
Collecting \eqref{5.25} and \eqref{5.26}, we obtain
\begin{align}
\frac{d}{dt}(\|\xi(t)\|_{L^2}^2+\|\D\xi(t)\|_{L^2}^2+\|w(t)\|_{L^2}^2)\leq C(\|\xi\|_{L^2}^2+\|\D\xi\|_{L^2}^2+\|w\|_{L^2}^2)
\end{align}
which implies that
\begin{align*}
\|\xi(t)\|_{L^2}^2+\|\D\xi(t)\|_{L^2}^2+\|w(t)\|_{L^2}^2\leq(\|\xi(0)\|_{L^2}^2+\|\D\xi(0)\|_{L^2}^2+\|w(0)\|_{L^2}^2) e^{Ct}\leq 0.
\end{align*}
Therefore, $\xi=\D\xi=w=0$ which completes a proof of uniqueness.
 \end{proof}

During the proof, we used the following elementary lemma, which is based on the rotation invariant property $W(Q u,Q\D u Q^T)=W(u,\D u)$  for a rotation $Q\in O(3)$(see \cite{Ga}):
\begin{lemma}\label{lem-appen}
	For a rotation $Q\in O(3)$, the term $u^iu^kW_{p^k_\alpha}(u,\D u)\D_{\alpha}w$ has the following invariant property:
	\begin{align*}
	u^i(x)u^k(x)W_{p^k_\alpha}(u(x),\D_x u(x))\D_{x_\alpha}w(x)^i=\tilde{u}^i(y)\tilde{u}^k(y)W_{\tilde{p}^k_\alpha}(\tilde{u}(y),\D_y\tilde{u}(y))\D_{y_\alpha}\tilde{w}^i(y),
	\end{align*}
	where $x=Q^Ty$ and $(\tilde{u}(y),\tilde{w}(y))=(Qu(Q^Ty),Qw(Q^Ty))$.
\end{lemma}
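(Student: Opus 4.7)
The claim is a tensor-algebra identity built on the $O(3)$ invariance $W(Qz,QpQ^T)=W(z,p)$ that has just been cited. My plan is to transform each of the four factors $\tilde u^i$, $\tilde u^k$, $W_{\tilde p^k_\alpha}$, $\D_{y_\alpha}\tilde w^i$ appearing on the right-hand side back into the $x$-variables and to collapse the resulting sums via $Q^TQ=QQ^T=I$.

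The preparatory step is the change of variables. Since $x=Q^Ty$ gives $\partial x_\beta/\partial y_\alpha=Q_{\alpha\beta}$, the chain rule yields
\[
\D_{y_\alpha}\tilde u^i(y)=Q_{ij}Q_{\alpha\beta}\D_{x_\beta}u^j(x),\qquad \D_{y_\alpha}\tilde w^i(y)=Q_{ij}Q_{\alpha\beta}\D_{x_\beta}w^j(x),
\]
i.e., $\D_y\tilde u=Q(\D_xu)Q^T$. In particular, the argument of $W$ on the right equals $(Qu(x),Q(\D_xu(x))Q^T)$.

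Next, I would differentiate the functional identity $W(Qz,QpQ^T)=W(z,p)$ in $p^k_\alpha$ with $z$ held fixed. Writing $\tilde p^i_\gamma=Q_{ij}Q_{\gamma\beta}p^j_\beta$ and applying the chain rule produces $W_{p^k_\alpha}(z,p)=Q_{ik}Q_{\gamma\alpha}W_{\tilde p^i_\gamma}(Qz,QpQ^T)$, which inverts via orthogonality to the transformation law
\[
W_{\tilde p^l_\beta}(\tilde u,\D_y\tilde u)=Q_{lk}Q_{\beta\alpha}W_{p^k_\alpha}(u,\D_xu).
\]

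Finally, I would substitute all four transformations into the right-hand side $\tilde u^i\tilde u^kW_{\tilde p^k_\alpha}(\tilde u,\D_y\tilde u)\D_{y_\alpha}\tilde w^i$. Each of the three summed indices $i,k,\alpha$ produces a pair of $Q$'s sharing a common contracted index, so the three sums $\sum_aQ_{aj}Q_{ar}$, $\sum_bQ_{bm}Q_{bk}$, $\sum_cQ_{c\alpha}Q_{c\beta}$ collapse to Kronecker deltas by $Q^TQ=I$, and relabeling yields exactly $u^iu^kW_{p^k_\alpha}(u,\D u)\D_{x_\alpha}w^i$. The argument has no analytic content---it is pure linear algebra plus one chain rule---and the only point requiring care is the bookkeeping of transposes, making sure that the three orthogonality cancellations really produce identities rather than stray factors of $Q$ (which would happen if $\tilde p$ were written as $Q^TpQ$ rather than $QpQ^T$).
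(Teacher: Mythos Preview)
Your proposal is correct and follows essentially the same route as the paper: both arguments compute the chain-rule transformation $\D_y\tilde u=Q(\D_xu)Q^T$, derive the transformation law $W_{\tilde p^l_\beta}=Q_{lk}Q_{\beta\alpha}W_{p^k_\alpha}$ from the $O(3)$ invariance of $W$, and then substitute into the right-hand side, collapsing the three $Q$-pairs via $Q^TQ=I$ to recover the left-hand side. The only cosmetic difference is that the paper obtains the transformation law for $W_p$ by applying the chain rule directly to $W(\tilde u,\D_y\tilde u)=W(u,\D_xu)$, whereas you differentiate the abstract identity $W(Qz,QpQ^T)=W(z,p)$; these are the same computation.
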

\begin{proof}
	Let $x=Q^Ty$ and $\tilde{u}(y)=Qu(Q^Ty)$.  By the chain rule, one has
	\begin{equation*}
	\D_{x_\beta} u^l=(Q^T)_{lk}\D_{y_\alpha}\tilde{u}^kQ_{\alpha\beta}.
	\end{equation*}
	Using this fact, it is easy to check that $W(\tilde{u},\D_y\tilde{u})=W(u,\D u)$ (see \cite{Ga}). Then,
	\begin{align*}
	&W_{\tilde{p}_\alpha^k}(\tilde{u}(y),\D_y \tilde{u}(y))=W_{\tilde{p}_\alpha^k}(u(x),u(x))\nonumber\\
	&=W_{p_\beta^l}(u(x),\D u(x))\frac{\p\D_{x_\beta}u^l}{\p\D_{_{y_\alpha}} \tilde{u}^k}=W_{p_\beta^l}(u(x),\D u(x))(Q^T)_{lk}Q_{\alpha\beta}.
	\end{align*}
	Therefore, we have
	\begin{align*}
	&\tilde{u}^i(y)\tilde{u}^k(y)W_{\tilde{p}^k_\alpha}(\tilde{u},\D_y\tilde{u})\D_{y_\alpha}\tilde{w}(y)\nonumber\\
	&=Q_{ij}u^j(x)Q_{km}u^m(x)W_{p_\beta^l}(u(x),\D u(x))(Q^T)_{lk}Q_{\alpha\beta}Q_{in}\D_{x_\g}u^n(x)(Q^T)_{\g\alpha}\\
	&=\delta_{nj}\delta_{lm}\delta_{\beta\g}u^j(x)u^m(x)W_{p_\beta^l}(u(x),\D u(x))\D_{x_\g}u^n(x)\nonumber\\
	&=u^j(x)u^l(x)W_{p_\beta^l}(u(x),\D u(x))\D_{x_\beta}u^j(x).\nonumber
	\end{align*}
	\end{proof}

\noindent{\bf Acknowledgments} The research  of the first
author was supported by the Australian Research Council
grant DP150101275. The second author was supported by the Australian Research Council
grant DP150101275 as an postdoctoral fellow.

\end{document}